\renewcommand{\phi}{\varphi}
\let\emptyset\varnothing
\newcommand{\C}{\mathbb{C}}
\newcommand{\N}{\mathbb{N}}
\newcommand{\Q}{{\mathbb Q}}
\newcommand{\Z}{\mathbb{Z}}
\newcommand{\F}{\mathbb{F}}
\newcommand{\Ch}{\mathcal{X}}
\theoremstyle{plain}
\numberwithin{equation}{subsection}
\let\oldmarginpar\marginpar
\renewcommand\marginpar[1]{\-\oldmarginpar[\raggedleft\footnotesize #1]
{\raggedright\footnotesize #1}}
\newtheorem{teorema}{Theorem}[subsection]
\newtheorem{prop}[teorema]{Proposition}
\newtheorem{lemma}[teorema]{Lemma}
\theoremstyle{remark}
\newtheorem{oss}[teorema]{Remark}
\newtheorem{esempio}[teorema]{Example}
\theoremstyle{definition}
\newtheorem{definizione}[teorema]{Definition}
\newcounter{margin}
\DeclareMathOperator{\Hom}{Hom}
\DeclareMathOperator{\Mat}{Mat}
\DeclareMathOperator{\Ker}{Ker}
\DeclareMathOperator{\Gl}{GL}
\DeclareMathOperator{\End}{End}
\DeclareMathOperator{\Imm}{Im}
\DeclareMathOperator{\Aut}{Aut}
\DeclareMathOperator{\tr}{tr}
\DeclareMathOperator{\ch}{char}
\DeclareMathOperator{\Stab}{Stab}
\DeclareMathOperator{\Rep}{Rep}
\DeclareMathOperator{\topp}{top}
\DeclareMathOperator{\Rad}{Rad}
\DeclareMathOperator{\Fr}{Fr}
\DeclareMathOperator{\Plexp}{Exp}
\DeclareMathOperator{\Plelog}{Log}
\DeclareMathOperator{\Ind}{Ind}
\DeclareMathOperator{\Coeff}{Coeff}
\DeclareMathOperator{\rank}{rank}
\DeclareMathOperator{\chh}{ch}
\DeclareMathOperator{\PGl}{PGL}
\begin{document}

\title{A generalization of Kac polynomials and tensor product \\ of  representations of $\Gl_n(\F_q)$}

\author{ Tommaso Scognamiglio
\\ {\it Université Paris Cité/IMJ-PRG}
\\{\tt tommaso.scognamiglio@imj-prg.fr}
}

\maketitle
\pagestyle{myheadings}

\begin{abstract}

Given a \emph{generic} $k$-tuple $(\mathcal{X}_1,\dots,\mathcal{X}_k)$ of split semisimple irreducible characters of ${\rm GL}_n(\F_q)$, Hausel, Letellier and Rodriguez-Villegas \cite[Theorem 1.4.1]{AH} constructed a \emph{star-shaped} quiver $Q=(I,\Omega)$ together with a  dimension vector $\alpha\in\mathbb{N}^I$ and they proved that 
\begin{equation}
\left\langle\mathcal{X}_1\otimes\cdots\otimes \mathcal{X}_k,1\right\rangle=a_{Q,\alpha}(q)
\label{eq}\end{equation}
where $a_{Q,\alpha}(t)\in\mathbb{Z}[t]$ is the so-called \emph{Kac polynomial}, i.e. it is the counting polynomial for the number of isomorphism classes of absolutely indecomposable representations of $Q$ of dimension vector $\alpha$ over finite fields. Moreover it was conjectured by Kac \cite{kacconj} and proved by Hausel-Letellier-Villegas \cite{aha3} that $a_{Q,\alpha}(t)$ has non-negative integer coefficients. From the above formula together with  Kac's results \cite{kacconj} they deduced that  $\left\langle\mathcal{X}_1\otimes\cdots\otimes \mathcal{X}_k,1\right\rangle\neq 0$ if and only if $\alpha$ is a root of $Q$ ; moreover $\left\langle\mathcal{X}_1\otimes\cdots\otimes \mathcal{X}_k,1\right\rangle=1$ exactly when $\alpha$ is a real root.
\bigskip

In this paper we extend their result to  any $k$-tuple $(\mathcal{X}_1,\dots,\mathcal{X}_k)$ of split semisimple irreducible characters  (which are not necessarily generic). To do that we introduce a stratification  indexed by subsets $V\subset\mathbb{N}^I$ on the set  of $k$-tuples of split semisimple irreducible characters of ${\rm GL}_n(\F_q)$ . The part corresponding to $V=\{\alpha\}$ consists of the subset of generic $k$-tuples $(\mathcal{X}_1,\dots,\mathcal{X}_k)$. A $k$-tuple $(\mathcal{X}_1,\dots,\mathcal{X}_k)$  in the stratum corresponding to $V\subset \mathbb{N}^I$ is said to be of level $V$.

A representation $\rho$ of $(Q,\alpha)$ is said to be of level at most $V\subset \mathbb{N}^I$ if the dimension vectors of the indecomposable components of $\rho\otimes_{\mathbb{F}_q}\overline{\mathbb{F}}_q$  belong to $V$. 

Given a  $k$-tuple $(\mathcal{X}_1,\dots,\mathcal{X}_k)$ of level $V$, our main theorem is the following generalisation of Formula (\ref{eq})

$$
 \left\langle\mathcal{X}_1\otimes\cdots\otimes \mathcal{X}_k,1\right\rangle=M_{Q,\alpha,V}(q)
 $$
where $M_{Q,\alpha,V}(t)\in\mathbb{Z}[t]$ is the counting polynomial for the number of isomorphism classes of representations of $(Q,\alpha)$ over $\mathbb{F}_q$ of level at most $V$. Moreover we prove a formula expressing $M_{Q,\alpha,V}(t)$ in terms of  Kac polynomials and so we get a formula expressing any  multiplicity $\left\langle\mathcal{X}_1\otimes\cdots\otimes \mathcal{X}_k,1\right\rangle$ in terms of  generic ones. As another consequence we prove that $ \left\langle\mathcal{X}_1\otimes\cdots\otimes \mathcal{X}_k,1\right\rangle$ is a polynomial in $q$ with non-negative integer coefficients and we give a criterion for its non vanishing  in terms of the root system of $Q$.

\end{abstract}

\tableofcontents
\newpage

\section{Introduction}

The aim of this paper is to study the relationship between the computation of multiplicities for tensor product of representations of $\Gl_n(\F_q)$ and quiver representations.

\subsection{Review on multiplicities}

The character table of $\Gl_n(\F_q)$ is known since 1955 by the work of Green  \cite{green}, who gave a combinatorial  description of it. His formulae for the values of the irreducible characters have an algorithmic nature.

Deligne and Lusztig \cite{DLu} later introduced $\ell$-adic cohomological methods to the study of the representation theory of finite reductive groups. Using this approach, in \cite{CharLu} Lusztig found a geometric way to construct the irreducible characters of a finite reductive group. In the same book, he introduced the notion of a semisimple and unipotent irreducible character by analogy with the Jordan decomposition for the conjugacy classes.

For the finite general linear group $\Gl_n(\F_q)$, Lusztig's costruction led to a geometric  interpretation of the character table found by Green, see  for example Lusztig and Srinivasan \cite{LSr}. 

\vspace{8 pt} 

Given  $\Ch_1,\Ch_2,\Ch_3$ irreducible characters of $\Gl_n(\F_q)$, the multiplicity $\left<\Ch_1 \otimes \Ch_2 ,\Ch_3 \right>$ is given by the formula \begin{equation}
    \label{boh}
    \left<\Ch_1 \otimes \Ch_2 ,\Ch_3 \right>=\dfrac{1}{|\Gl_n(\F_q)|}\sum_{g \in \Gl_n(q)}\Ch_1(g)\Ch_2(g)\overline{\Ch_3(g)}
\end{equation}
Altough the character table of $\Gl_n(\F_q)$ is known for a long time, it is not easy to extract general information  from  Formula (\ref{boh}) above, due to the inductive description of the values of the  characters. 

\vspace{8 pt}

\begin{esempio}
\label{unip}
Recall that the \textit{unipotent} characters of $\Gl_n(\F_q)$, which are the "building blocks" of the character table, are in bijection with the irreducible representations of $S_n$ and so with the partitions of $n$.

For a partition $\mu$, we denote by $\chi_{\mu}$ the associated character of $S_n$ and by $\mathcal{X}_{\mu}$ the associated unipotent character of $\Gl_n(\F_q)$ (in our paramatrization, we associate to the partition $(n)$ the trivial character $1$).

 From  Formula (\ref{boh}), it is nearly impossible to  obtain directly a  combinatorial description of the set $\{(\lambda,\mu,\nu) \in \mathcal{P}_{n} \ | \ \langle \Ch_{\lambda} \otimes \Ch_{\mu},\Ch_{\nu} \rangle \neq 0 \}$, where $\mathcal{P}_n$ is the set of the partitions of $n$.  

  Already for $S_n$, the problem of giving a  combinatorial criterion for the non-vanishing of the \textit{Kronecker coefficients} $g_{\lambda,\mu}^{\nu}\coloneqq \langle \chi_{\mu} \otimes \chi_{\lambda},\chi_{\nu}\rangle$, is still open and is a very active area of research.
  
  Interestingly, the two problems were shown to be related by Letellier \cite{Unip}: in particular,  Letellier \cite[Proposition 1.2.4]{Unip},  showed that if $g_{\lambda,\mu}^{\nu}\neq 0$ then $\langle \Ch_{\lambda} \otimes \Ch_{\mu},\Ch_{\nu} \rangle \neq 0$ too.
\end{esempio} 
\vspace{10 pt}

Recall that the multiplicity $\langle \Ch_1 \otimes \Ch_2,\Ch_3 \rangle$ is equal to $\langle \Ch_1 \otimes \Ch_2 \otimes \Ch_3^*,1 \rangle$ where $\Ch_3^*$ is the dual character of $\Ch_3$. One of the aims of this paper is to contribute to the study of the multiplicites $\left<\Ch_1 \otimes \cdots \otimes \Ch_k,1\right>$ for any $k$-tuple of irreducible characters $(\Ch_1,\dots ,\Ch_k)$. 

The understanding of these quantites is still an open problem in general but substantial progress were made recently.

The first cases studied in the literature concerned $k$-tuples $(\Ch_1,\dots,\Ch_k)$ where each $\Ch_i$ is an unipotent character. Hiss, L\"ubeck and Mattig \cite{Hiss} computed, for example, the multiplicities $\langle \Ch_1 \otimes \Ch_2 \otimes \Ch_3,1 \rangle$ for unipotent characters $\Ch_1,\Ch_2,\Ch_3$ and $n \leq 8$ using CHEVIE. They noticed that these quantities are polynomials in $q$, with positive coefficients. Lusztig studied multiplicities for unipotent character sheaves of $\PGl_2$ \cite{lusztignotes}. 

The first general results  were obtained in the papers \cite[Theorem 1.4.1]{HA},\cite[Theorem 3.2.7]{AH} by Hausel, Letellier, Rodriguez-Villegas and were later generalised by Letellier \cite[Theorem 6.10.1,Theorem 7.4.1]{letellier2},\cite[Theorem 3.3.1, Proposition 3.4.1]{Unip}. 

The authors \cite{HA},\cite{AH},\cite{letellier2} restricted themselves to a certain class of $k$-tuples $(\Ch_1,\dots,\Ch_k)$, called \textit{generic} (see  Definition \ref{generic1}). Notice that a $k$-tuple of unipotent characters is  never generic.

For generic $k$-tuples, the calculations of  Formula (\ref{boh}) simplify and we can deduce a general combinatorial formula for the multiplicity $\langle \Ch_1 \otimes \cdots \otimes \Ch_k,1 \rangle$, involving symmetric functions (see \cite[Theorem 6.10.1]{letellier2}).

The generic case is  particularly interesting due to the surprising connection with quiver representations  and Kac polynomials. In \cite[Corollary 3.4.2]{AH}\cite[Theorem 7.4.5]{letellier2}, the authors found, for example, a criterion for the non-vanishing of the multiplicity $\langle \Ch_1 \otimes \cdots \otimes \Ch_k,1 \rangle$ for generic $k$-tuples in terms of the root system of a certain star-shaped quiver.

In the same papers \cite[Theorem 6.2.1]{HA},\cite[Theorem 7.4.1]{letellier2}, it is given also a geometric interpretation of the quantities $\langle \Ch_1 \otimes \cdots \otimes \Ch_k,1 \rangle$ in terms of the cohomology of certain quiver varieties.

\vspace{8 pt}

In this paper we are interested in the study of the multiplicities  $\langle \Ch_1 \otimes \cdots \otimes \Ch_k,1 \rangle$ by dropping the genericity constraint. We will mainly focus on $k$-tuples $(\Ch_1,\dots,\Ch_k)$ where each $\Ch_i$ is a semisimple (split) character (see  \cref{boh2}).

\subsection{Generalities on quivers and Kac polynomials}
\label{questarevisionenonfiniscemai}
Let $\Gamma=(J,\Omega)$ be a (finite) quiver, where $J$ is its set of vertices and $\Omega$ its set of arrows. In \cite{kacconj}, for each dimension vector $\beta \in \N^J$, Kac introduced a polynomial with integer coefficients $a_{\Gamma,\beta}(t)$, called \textit{Kac polynomial}, defined by the fact that $a_{\Gamma,\beta}(q)$ counts the number of isomorphism classes of \textit{absolutely indecomposable} representations of $\Gamma$ (see Definiton \ref{abs}) of dimension $\beta$ over $\mathbb{F}_q$, for any $q$.

Kac showed that $a_{\Gamma,\beta}(t)$ is non-zero if and only if $\beta$ is a root of $Q$ and conjectured that it has non-negative coefficients. 

The latter conjecture was first proved  by Crawley-Boevey and Van der Bergh \cite{crawley-boevey-etal} in the case of indivisible $\beta$ (i.e $\gcd(\beta_j)_{j \in J}=1$)
and later for any $\beta$ by Hausel, Letellier, Rodriguez-Villegas in \cite{aha3}. In both cases, the authors obtained the non-negativity property by giving a description of the coefficients in terms of the cohomology of certain quiver varieties.

For instance, if  $\beta$ is indivisible, in \cite[End of Proof 2.4]{crawley-boevey-etal}, it is shown, that  there  is an equality $$P_c(\mathcal{Q},t)=t^{d_{\mathcal{Q}}}a_{Q,\alpha}(t^2) ,$$ for a certain quiver variety $\mathcal{Q}$, where $P_c(\mathcal{Q},t)$ is the compactly supported Poincar\'e polynomial of $\mathcal{Q}$ and $d_{\mathcal{Q}}$ is the dimension of $\mathcal{Q}$.

Kac polynomials enjoy other remarkable properties. For instance, they are related to the representation theory of Kac-Moody Lie algebras (see \cite{kacconj}) and they appear in the theory of Donaldson-Thomas invariants of the quiver $\Gamma$ (see \cite{DT},\cite{kontsevich_stability}).

\subsection{Quivers and characters} 
\label{boh2}
Let $L$ be the Levi subgroup $L=\Gl_{m_1}(\F_q) \times \cdots \times \Gl_{m_s}(\F_q)$ embedded block diagonally in $\Gl_n(\F_q)$, where $m_1,\dots,m_s$ are nonnegative integers such that $m_1+\dots+ m_s=n$. Fix a linear character $\gamma:L \to \C^*$ given by $\gamma(M_1,\dots,M_s)=\gamma_1(\det(M_1))\cdots \gamma_s(\det(M_s))$ for $\gamma_1,\dots,\gamma_s \in \Hom(\mathbb{F}_q^*,\C^*)$.

We denote by $R^G_{L}(\gamma)$ the Harisha-Chandra induced character of $\Gl_n(\F_q)$. Recall that if $\gamma_i \neq \gamma_j$ for each $i \neq j$ the character $R^G_L(\gamma)$ is irreducible. The irreducible characters of this form are called \textit{semisimple split} (see \cref{splitdef} for more details). 

Consider now a $k$-tuple of semisimple split characters $(R^G_{L_1}(\delta_1),\dots,R^G_{L_k}(\delta_k))$ , where, for $i=1,\dots,k$, we have $L_i=\Gl_{m_{i,1}}(\F_q) \times \cdots \times \Gl_{m_{i,s_i}}(\F_q)$ and $\delta_i(M_1,\dots,M_{s_i})=\delta_{i,1}(\det(M_1))\cdots$ $\delta_{i,s_i}(M_{s_i})$. To such a $k$-tuple we associate the following star-shaped quiver $Q$.

\vspace{8 pt}

\begin{center}
    \begin{tikzcd}[row sep=1em,column sep=3em]
    & &\circ^{[1,1]} \arrow[ddll,""] &\circ^{[1,2]} \arrow[l,""]  &\dots \arrow[l,""] &\circ^{[1,s_1-1]} \arrow[l,""]\\
    & &\circ^{[2,1]} \arrow[dll,""] &\circ^{[2,2]} \arrow[l,""] &\dots \arrow[l,""] &\circ^{[2,s_2-1]} \arrow[l,""]\\
    \circ^0  & &\cdot &\cdot\\
    & &\cdot &\cdot\\
    & &\cdot &\cdot\\
    & &\circ^{[k,1]} \arrow[uuull,""] &\circ^{[k,2]} \arrow[l,""]  &\dots \arrow[l,""] &\circ^{[k,s_k-1]} \arrow[l,""]
    \end{tikzcd}
\end{center}

\vspace{8 pt}

Let $I$ be the set of vertices of $Q$ and let $\alpha$ be the dimension vector $\alpha \in \N^I$ defined as $\alpha_0=n$ and $\alpha_{[i,j]}=n-\sum_{h=1}^jm_{i,j}$. Notice that the quiver $Q$ and the vector $\alpha$ depend only on the Levi subgroups $L_1,\dots,L_k$ and not on the characters $\delta_1,\dots,\delta_k$. 

\vspace{8 pt}

\vspace{8 pt}

In \cite[Theorem 3.2.7]{AH} Hausel, Letellier, Rodriguez-Villegas showed  that if the $\delta_i$'s are chosen so that $(R^G_{L_i}(\delta_i))_{i=1}^k$ is generic (such a choice is always possible if $q$ is big enough), there is an equality: \begin{equation}
     \label{introd1}
     \langle  R^G_{L_{1}}(\delta_1)\otimes \cdots \otimes R^G_{L_{k}}(\delta_k),1 \rangle=a_{Q,\alpha}(q).
 \end{equation}
 
 Notice that the Formula (\ref{introd1}) implies that, as long as the choice of the $\delta_i$'s is generic, the multiplicity $\langle  R^G_{L_{1}}(\delta_1)\otimes \cdots \otimes R^G_{L_{k}}(\delta_k),1 \rangle$ does not depend on the characters $\delta_i$'s: this is not clear a priori from the Formula (\ref{boh}). As a consequence of Formula (\ref{introd1}),  the multiplicity $\langle  R^G_{L_{1}}(\delta_1)\otimes \cdots \otimes R^G_{L_{k}}(\delta_k),1 \rangle$ is non-zero if and only if $\alpha \in \Phi^+(Q)$.

\vspace{14 pt} 
 
The aim of this paper is to generalize these results  to any $k$-tuple $(R^G_{L_i}(\delta_i))_{i=1}^k$. The main results of this paper  will remain valid without  assuming that the characters $R^G_{L_1}(\delta_1),\dots, R^G_{L_k}(\delta_k)$ are irreducible.  

\vspace{8 pt}

To study the non-generic case, we will begin by defining a partition both on the set of $k$-tuples of semisimple split characters $(R^G_{L_1}(\gamma_1),\dots,R^G_{L_k}(\gamma_k))$ and on the set of representations of $Q$ of dimension $\alpha$, indexed by subsets $V \subseteq \N^I$. 

The  level of the partition associated to $V=\{\alpha\}$  will correspond to the case of generic $k$-tuples/absolutely indecomposable representations  respectively. 

\vspace{10 pt}
Consider more generally any finite quiver $\Gamma=(J,\Omega)$. To a representation $M$ of $\Gamma$ over $\F_q$, we associate the following subset $\mathcal{H}_M \subseteq \N^I$. Given the decomposition into indecomposable components $$M \otimes_{\F_q} \overline{\F_q} \cong  M_1 \oplus \cdots \oplus M_h ,$$ we define $$\mathcal{H}_M \coloneqq \{ 0 < \beta \leq \dim M \ | \  \exists 1 \leq j \leq h \text{ s.t } \beta=\dim M_j\} .$$

For any $V \subseteq \N^J$, we give the following   definition of the representations of $\Gamma$ of level $V$ over $\F_q$ (see \ref{W-generic}).

\begin{definizione}
\label{definitionlevelvintrod}
A representation $M$ of $\Gamma$ over $\F_q$ is said to be of level $V$ if $\mathcal{H}_M=\{0 < \beta \leq \dim M \ | \ \beta \in V \}$.
\end{definizione}

Notice that, for $v \in \N^J$, a  representation of dimension  $v$ is of level $\{v\}$ if and only if it is absolutely indecomposable . In particular, the number of isomorphism classes of representations of level $\{v\}$ and dimension $v$ over finite fields is counted by the Kac polynomial $a_{\Gamma,v}(t)$.  
\vspace{10 pt}

However, for a general $V \subseteq \N^J$, the counting of the isomorphism classes of the representations of $\Gamma$ of prescribed dimension and of level $V$ over $\F_q$ does not seem to give an interesting generalization of Kac polynomials. In this direction, to obtain such a generalization, we introduce the following definition of a representation of level at most $V$ (see Definition \ref{representationsoflevelatmostV}).  

\begin{definizione}
\label{representationsoflevelatmostVintrod}
For a subset $V \subseteq \N^J$, a representation $M$ is said to be of level at most $V$ if it is of level $V'$ for some $V' \subseteq V$, i.e if and only if $\mathcal{H}_M  \subseteq V$. 
\end{definizione}

For any $\beta \in \N^J$ and any $V \subseteq \N^J$,
we show  that the number of isomorphism classes of  representations of $\Gamma$ of level at most $V$  of dimension $\beta$ over $\F_q$ is equal to the evaluation of a polynomial $M_{\Gamma,\beta,V}(t) \in \Z[t]$ at $t=q$. 

Moreover, we prove a formula for the generating function of the polynomials $M_{\Gamma,\beta,V}(t)$ of the following type (see Lemma \ref{formulaV}): 
\begin{equation}
\label{introd2}
    \Plexp\left(\sum_{\gamma \in V}a_{\Gamma,\gamma}(t)y^{\gamma}\right)=\sum_{\beta \in \N^I}M_{\Gamma,\beta,V}(t)y^{\beta}.\end{equation}
where $\Plexp$ is the plethystic exponential.

\vspace{12 pt}
Consider now the quiver $Q$ and the dimension vector $\alpha$ introduced above. Let $(\N^I)^* \subseteq \N^I$ be the subset of vectors with non-increasing coordinates along the legs.

In \cref{parameter}, to any $k$-tuple $\mathcal{X}=(R^G_{L_1}(\delta_1),\dots,R^G_{L_k}(\delta_k))$ we associate an element $\sigma_{\mathcal{X}} \in \Hom(\F_q^*,\C^*)^I$ (see Formula (\ref{definitionsigma})). Let $\mathcal{H}^*_{\sigma_{\mathcal{X}},\alpha} \subseteq (\N^I)^*$ be the subset defined by $$\mathcal{H}^*_{\sigma_{\mathcal{X}},\alpha} \coloneqq \{0 <\beta \leq \alpha \ | \ \sigma_{\mathcal{X}}^{\beta}=1\} $$ where $\displaystyle\sigma_{\mathcal{X}}^{\beta}\coloneqq \prod_{i \in I}(\sigma_{\mathcal{X}})_i^{\beta_i}$. For any $V \subseteq (\N^I)^*$, we give the following definition of a $k$-tuple $(R^G_{L_i}(\delta_i))_{i=1}^k$  of level $V$ (see \ref{defgeneric}).

\begin{definizione}
A $k$-tuple $\mathcal{X}=(R^G_{L_1}(\delta_1),\dots,R^G_{L_k}(\delta_k))$ is said to be of level $V$ if $\mathcal{H}^*_{\sigma_{\mathcal{X}},\alpha}=\{0 <\beta \leq \alpha \ | \ \beta \in V \}$.
\end{definizione}

For $V=\{\alpha\}$, we show that if a  $k$-tuple $(R^G_{L_1}(\delta_1),\dots,R^G_{L_k}(\delta_k))$ is of level $\{\alpha\}$ then it is generic (see Proposition \ref{generic2}).

\vspace{8 pt}

The main result of this paper extends Formula (\ref{introd1}) by relating the multiplicity for $k$-tuples of level $V$ and representations of level at most $V$ in the following way.

\begin{teorema}
\label{mainteo2}
Let $V \subseteq (\N^I)^*$ and $\mathcal{X}=(R^G_{L_{1}}(\delta_1),\dots ,R^G_{L_{k}}(\delta_k))$ be a $k$-tuple of level $V$. The following equality holds: 
\begin{equation}
    \label{mainteo3}
    \langle R^G_{L_{1}}(\delta_1)\otimes \cdots \otimes R^G_{L_{k}}(\delta_k),1 \rangle=M_{Q,\alpha,V}(q)
\end{equation}
\end{teorema}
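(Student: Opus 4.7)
The plan is to match two generating functions: on the representation-theoretic side, the multiplicities $\langle R^G_{L_1}(\delta_1)\otimes\cdots\otimes R^G_{L_k}(\delta_k),1\rangle$ as the $k$-tuple varies with level fixed to $V$, and on the quiver side, the polynomials $M_{Q,\alpha,V}(q)$. Since identity (\ref{introd2}) already presents the quiver-side generating function as a plethystic exponential of the Kac polynomials $a_{Q,\gamma}(t)$ for $\gamma\in V$, the task reduces to exhibiting the character-side multiplicity as the coefficient of $y^\alpha$ in the same plethystic exponential.

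The first step is to apply Frobenius' formula (\ref{boh}) and expand each induced character using the explicit form of Harish-Chandra induction together with Green's character values. This should rewrite $\langle \mathcal{X}_1\otimes\cdots\otimes \mathcal{X}_k,1\rangle$ as a sum, indexed by ``types'' (partition-valued functions on $\F_q^*$), of products of Hall--Littlewood symmetric functions paired against the torus data, in the spirit of \cite[Theorem 6.10.1]{letellier2}. The dependence on the torus characters $\delta_{i,j}$ enters only through $\sigma_{\mathcal{X}}$, and the level-$V$ hypothesis ensures that only those types which ``live in $V$'' in a precise combinatorial sense actually contribute.

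The second step is to factor the resulting sum plethystically. For each $\gamma\in V$, I would construct an auxiliary generic $k$-tuple of semisimple split characters attached to the sub-data of dimension $\gamma$, whose multiplicity equals $a_{Q,\gamma}(q)$ by Formula (\ref{introd1}). Grouping the surviving types in the expansion of $\langle \mathcal{X}_1\otimes\cdots\otimes \mathcal{X}_k,1\rangle$ according to how they distribute among the sub-dimensions $\gamma$, each group should factor as a product of such generic multiplicities. Summing over all such distributions then yields $\Plexp\!\left(\sum_{\gamma\in V}a_{Q,\gamma}(t)y^\gamma\right)$ at $t=q$, whose $y^\alpha$-coefficient is $M_{Q,\alpha,V}(q)$ by (\ref{introd2}).

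The main obstacle, I expect, is to establish the plethystic factorization rigorously on the character side. The quiver side is essentially formal: the Krull--Schmidt decomposition over $\overline{\F_q}$ combined with the Frobenius action on absolutely indecomposable summands produces the plethystic identity directly. Transporting this structure through Green's character formula---that is, proving that the surviving types truly decouple into independent blocks indexed by $\gamma\in V$, each isomorphic to the type expansion of a generic multiplicity on a sub-quiver of dimension $\gamma$---will be the delicate combinatorial core of the argument, and likely where the construction of the auxiliary generic $k$-tuples plays its essential role.
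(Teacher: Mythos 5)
There is a genuine gap: your plan defers precisely the step that carries all of the difficulty, namely the claimed ``plethystic factorization on the character side,'' and there are concrete reasons why it cannot be established in the form you describe. After expanding (\ref{boh}) via Green's formula and Hall--Littlewood functions, the sum is indexed by types of conjugacy classes of $\Gl_n(\F_q)$, and these do not split into independent blocks indexed by sub-dimensions $\gamma\in V$: a single conjugacy class contributes simultaneously across any decomposition $\alpha=\sum_i m_i\gamma_i$, and the Hall pairing does not factor along such decompositions. The level-$V$ hypothesis only controls which character sums over $\F_q^*$ survive (through the condition $\sigma_{\mathcal{X}}^{\beta}=1$); it does not by itself decouple the types. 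Moreover, the coefficient of $y^{\alpha}$ in $\Plexp\bigl(\sum_{\gamma\in V}a_{Q,\gamma}(t)y^{\gamma}\bigr)$ contains, through the Adams operations, terms $a_{Q,\gamma}(q^{d})$ with $d>1$; these are not multiplicities of generic $k$-tuples over $\F_q$, so the auxiliary generic $k$-tuples you propose to build via (\ref{introd1}) account only for the $d=1$ part of the expansion, and you would need an extra descent mechanism (data over $\F_{q^d}$ and Frobenius twisting) to produce the remaining factors. As it stands, the ``delicate combinatorial core'' you acknowledge is exactly the theorem, not a technical afterthought.

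For comparison, the paper goes in the opposite direction and never touches Green's formula or symmetric functions. It first proves a pointwise identity (Lemma \ref{ssemp4}): for every $g\in\Gl_n(\F_q)$, the product $\prod_i R^G_{L_i}(\gamma_i)(g)$ equals the average over the leg groups of the permutation character $R^{*}_{\alpha}$ of $\Gl_{\alpha}(\F_q)$ on injective representations of $Q$, twisted by the linear character $\rho_{\mathcal{X}}=\sigma_{\mathcal{X}}(\det_I)$; the proof is an explicit bijection between $g$-stable flags and orbits of tuples of injective maps. It then computes $\langle R^{*}_{\alpha}\otimes\rho_{\mathcal{X}},1\rangle$ by Burnside's formula together with a stabilizer analysis as in Theorem \ref{multrep}: using $\Stab(x)/U_x\cong\prod_j\Gl_{r_j}(\F_{q^{d_j}})$, the restriction of $\rho_{\mathcal{X}}$ to $\Stab(x)$ is trivial if and only if every indecomposable summand of $x$ has $\dim x_j/d_j\in\mathcal{H}^{*}_{\sigma_{\mathcal{X}},\alpha}=V_{\leq\alpha}$, i.e.\ if and only if $x$ has level at most $V$, which yields $M_{Q,\alpha,V}(q)$ directly. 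In that argument the identity (\ref{introd2}) serves only to define and analyze $M_{Q,\alpha,V}(t)$, and the generic formula (\ref{introd1}) comes out as a special case rather than being an input; salvaging your route would amount to redoing the Hausel--Letellier--Rodriguez-Villegas computation without genericity, a much harder task than the counting argument above.
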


\vspace{8 pt}

From Theorem \ref{mainteo2} and  Formula (\ref{introd2}), in Proposition \ref{nonvan} we obtain the following criterion  for the non-vanishing of the multiplicity $\langle R^G_{L_1}(\delta_1) \otimes \cdots \otimes R^G_{L_k}(\delta_k),1 \rangle$, generalizing the criterion for generic $k$-tuples of \cite[Corollary 1.4.2]{AH} .

\begin{prop}
For a $k$-tuple $(R^G_{L_1}(\delta_1),\dots,R^G_{L_k}(\delta_k))$ of level $V$, the multiplicity $$\left< R^G_{L_1}(\delta_1) \otimes \cdots \otimes R^G_{L_k}(\delta_k) ,1\right>$$ is non-zero if and only there exist
\begin{itemize}
    \item $\beta_1,\dots, \beta_r \in \Phi^+(Q) \cap V$
    \item $m_1,\dots ,m_r \in \N$
\end{itemize}
such that $m_1\beta_1+\cdots +m_r\beta_r=\alpha$
\end{prop}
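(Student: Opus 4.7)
The plan is to combine Theorem \ref{mainteo2} with the generating-function identity (\ref{introd2}) and Kac's characterization of the support of Kac polynomials. By Theorem \ref{mainteo2} the multiplicity equals $M_{Q,\alpha,V}(q)$, so it suffices to determine when $M_{Q,\alpha,V}(q)\neq 0$.

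First I would expand the plethystic exponential in (\ref{introd2}). Writing each Kac polynomial as $a_{Q,\gamma}(t)=\sum_{j\geq 0}c_{\gamma,j}\,t^{j}$ with $c_{\gamma,j}\in\Z_{\geq 0}$ (by the Hausel-Letellier-Rodriguez-Villegas positivity theorem \cite{aha3}), the standard product formula for $\Plexp$ turns (\ref{introd2}) into
$$\sum_{\beta\in\N^{I}} M_{Q,\beta,V}(t)\,y^{\beta}\;=\;\prod_{\gamma\in V}\prod_{j\geq 0}\frac{1}{(1-t^{j}y^{\gamma})^{c_{\gamma,j}}}.$$
Expanding each factor geometrically and extracting the coefficient of $y^{\alpha}$ gives
$$M_{Q,\alpha,V}(t)\;=\;\sum_{(k_{\gamma,j})}\;\prod_{\gamma\in V,\,j\geq 0}\binom{c_{\gamma,j}+k_{\gamma,j}-1}{k_{\gamma,j}}\,t^{jk_{\gamma,j}},$$
the sum ranging over families $(k_{\gamma,j})_{\gamma\in V,\,j\geq 0}$ of non-negative integers with $\sum_{\gamma,j}k_{\gamma,j}\,\gamma=\alpha$ (a finite range, since $V\subseteq\{0<\beta\leq\alpha\}$). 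Every summand has non-negative coefficients, so $M_{Q,\alpha,V}(t)\not\equiv 0$ iff at least one summand is non-zero, which happens iff the family can be chosen so that $k_{\gamma,j}>0\Rightarrow c_{\gamma,j}>0$.

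Setting $m_{\gamma}:=\sum_{j}k_{\gamma,j}$, this last condition reads $\alpha=\sum_{\gamma\in V}m_{\gamma}\,\gamma$ with $m_{\gamma}>0$ only when $a_{Q,\gamma}(t)\neq 0$. By Kac's theorem (recalled in \cref{questarevisionenonfiniscemai}) one has $a_{Q,\gamma}(t)\neq 0$ iff $\gamma\in\Phi^{+}(Q)$, so this is precisely the existence of $\beta_{1},\dots,\beta_{r}\in\Phi^{+}(Q)\cap V$ and $m_{1},\dots,m_{r}\in\N$ with $\alpha=m_{1}\beta_{1}+\cdots+m_{r}\beta_{r}$. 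Finally, since $M_{Q,\alpha,V}(t)$ has non-negative integer coefficients, for any $q\geq 1$ one has $M_{Q,\alpha,V}(q)\neq 0$ iff $M_{Q,\alpha,V}(t)\not\equiv 0$, yielding the criterion. The only non-formal input is HLV positivity, needed to rule out cancellations in the product expansion; apart from that the argument is pure generating-function bookkeeping, so no serious obstacle is expected.
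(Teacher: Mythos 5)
Your argument is correct and is essentially the paper's own proof: the paper likewise combines Theorem \ref{mainteo2} with the plethystic identity (\ref{introd2}), uses the Hausel--Letellier--Rodriguez-Villegas positivity of Kac polynomials to exclude cancellations in the expansion of $\Plexp$, and then invokes Kac's theorem that $a_{Q,\gamma}\neq 0$ iff $\gamma\in\Phi^+(Q)$ (this is Proposition \ref{nonnega1}, from which Proposition \ref{nonvan} follows). The only difference is cosmetic: you expand $\Plexp$ as an Euler product $\prod_{\gamma,j}(1-t^jy^{\gamma})^{-c_{\gamma,j}}$ with explicit binomial coefficients, whereas the paper expands the exponential directly into sums of terms $a_{Q,\beta_1}(t^{n_1})^{m_1}k_1^{-1}\cdots a_{Q,\beta_l}(t^{n_l})^{m_l}k_l^{-1}$ with $\sum_i n_im_i\beta_i=\alpha$.
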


\vspace{10 pt}

\begin{oss}

One of the interesting consequence of Theorem \ref{mainteo2} and Formula (\ref{introd2}) is that they provide a way to express the multiplicity $\langle R^G_{L_1}(\delta_1) \otimes \cdots \otimes R^G_{L_k}(\delta_k),1 \rangle$ for any semisimple split $k$-tuple $(R^G_{L_1}(\delta_1),\dots,R^G_{L_k}(\delta_k))$ in terms of the multiplicities for certain \textit{generic} ones, for which the articles \cite{HA}, \cite{AH}, \cite{letellier2} provide a thorough description.

 The same kind of result, i.e finding a way to express  what happens for the non-generic cases in terms of the generic ones, has already appeared in different works related to this subject.
 
 Davison \cite[Theorem B]{Bro}, for example, recently showed that the cohomology of non-generic quiver stacks can be expressed in terms of Kac polynomials.

 In \cite[Proposition 1.2.1]{Unip}, Letellier proved a formula which expresses the multiplicities $\langle \Ch_1 \otimes \cdots \otimes \Ch_k,1 \rangle$ for $k$-tuples of \textit{unipotent} characters $(\Ch_1,\dots,\Ch_k)$ in terms of the multiplicities for generic $k$-tuples of  twisted unipotent characters. 
\end{oss}

\vspace{8 pt}
 
\begin{oss}
Let $g \in \N$ and let $\Lambda$ be the character of $\Gl_n(\F_q)$ obtained by its conjugation action on $\mathfrak{gl}_n(\F_q)^g$. Theorem \ref{mainteo} shows more generally that, for a $k$-tuple $(R^G_{L_i}(\gamma_i))_{i=1}^k$ of level $V$, there is an equality $\langle \Lambda \otimes R^G_{L_{1}}(\gamma_1)\otimes \cdots \otimes R^G_{L_{k}}(\gamma_k),1 \rangle=M_{Q',\alpha,V}(q)$ where $Q'$ is the quiver obtained from $Q$ by adding $g$ loops at the central vertex $0$. Equation (\ref{mainteo3}) is thus the $g=0$ case of this result.

The presence of the character $\Lambda$ seems not very relevant from the point of view of the representation theory of $\Gl_n(\F_q)$. This generalization is however interesting from the point of view  of quiver representations and for the relationship of the formula for the multiplicity $\langle \Lambda \otimes R^G_{L_{1}}(\gamma_1)\otimes \cdots \otimes R^G_{L_{k}}(\gamma_k),1 \rangle$ to those computing the E-polynomials of character varieties for a Riemann surface of genus $g$ (see for example \cite[Theorem 5.2.1,Theorem 6.1.1]{HA}) 
\end{oss}

\subsection{Final questions}

\subsubsection{Geometric interpretation for representations of level $V$}

Consider a finite quiver $\Gamma=(J,\Omega)$ and  a dimension vector $\eta \in \N^J$. Notice that for $V=\{\eta\}$, there is an equality $M_{\Gamma,\eta,\{\eta\}}(t)=a_{\Gamma,\eta}(t)$. In this case $M_{\Gamma,\eta,V}(t)$ has thus non-negative coefficients which have an interpretation in terms of the cohomology of quiver varieties. 

From  Formula (\ref{introd2}) and the positivity of Kac polynomials, in Proposition \ref{nonnega1} we manage to show that, for any $V \subseteq \N^I,\eta \in \N^I$, the polynomial $M_{\Gamma,\eta,V}(t)$ has nonnegative integer coefficients. It would be interesting to look for  geometric interpretations of the coefficients of $M_{\Gamma,\eta,V}(t)$ for any $V$.

It is likely that the arguments of \cite[Theorem C]{Davison} could be used to relate the polynomials $M_{\Gamma,\eta,V}(t)$ to the DT-invariants of the Harder-Narasimhan part of the preprojective stack of $\Gamma$ whose graded associated has semistable factors with dimension vector belonging to $V$. 

However, it seems that there is no immediate way to relate the polynomial $M_{\Gamma,\eta,V}(t)$ to the geometry of quiver varieties for a general $V$. For instance, the argument used by Crawley-Boevey and Van der Bergh \cite[Proposition 2.2.1]{crawley-boevey-etal} to link absolutely indecomposable representations to the cohomology of quiver varieties cannot be extended to an arbitrary level $V$.

\subsubsection{Multiplicity for a general $k$-tuple of characters}
It remains an open question to obtain a full understanding of the multiplicity $\langle \Ch_1 \otimes \cdots \otimes \Ch_k,1 \rangle$ for an arbitrary $k$-tuple $\mathcal{X}=(\Ch_1,\dots,\Ch_k)$ of not necessarily split semisimple irreducible characters.  

However, we do not expect that it would be possible to obtain an immediate generalization of Formula (\ref{mainteo3}) or a direct interpretation of the quantity $\langle \Ch_1 \otimes \cdots \otimes \Ch_k,1 \rangle$ in terms of quiver representations.

\vspace{8 pt}

In the generic case, Letellier  \cite[Theorem 6.10.1]{letellier2} found a  formula
giving a combinatorial expression for the multiplicity $\langle \Ch_1 \otimes \cdots \otimes \Ch_k,1 \rangle$ for any generic $k$-tuple $\mathcal{X}$. His formula has an interpretation in terms of the cohomology of a certain quiver variety (see \cite{letelliervillegas}). However, Letellier's result does not provide a straightforward way to relate the multiplicity $\langle \Ch_1 \otimes \cdots \otimes \Ch_k,1 \rangle$ to the counting of absolutely indecomposable representations of star-shaped quivers.  

\paragraph{Acknowledgements.}

I would like to warmly thank Emmanuel Letellier for bringing this subject to my attention and many useful suggestions.

I would also like to thank the anonymous referee for many useful suggestions about a first draft of the paper.

\section{Preliminaries}

\subsection{Plethystic operations}
\label{plety}
Let $I$ be a finite set and let $R$ be the ring of power series in $|I|$-variables, with coefficients in $\Q(t)$ i.e $R\coloneqq \Q(t)[[y_i]]_{i \in I}$. For $\alpha \in \N^I$, we will denote by $y^{\alpha}$ the monomial $\displaystyle\prod_{i \in I}y_i^{\alpha_i}$ and we will  denote by $R^+ \subseteq R$ the ideal generated by the $y_i$'s for $i \in I$. 

The ring $R$ is endowed with a natural $\lambda$-ring structure  (for basic constructions and definition about $\lambda$-rings see for example \cite{getzler_mixed}), given by the following Adam operations $$\psi_d(f(t,y))\coloneqq f(t^d,y^d) .$$

 We define the \textit{plethystic exponential}  $\Plexp:R^+ \to 1+R^+$ by the following rule \begin{equation}\label{exp1}\Plexp(f) \coloneqq \exp\left(\sum_{n \geq 1}\frac{\psi_n(f)}{n}\right) .\end{equation}

Notice that for  $A,B \in R^+$  we have  $\Plexp(A+B)=\Plexp(A)\Plexp(B) .$

\begin{esempio}

If $R=\Q(t)[[T]]$ (i.e $|I|=1$), we have: \begin{equation}
    \label{esempio}
    \Plexp(T)=\exp\left(\sum_{n \geq 1} \frac{T^n}{n}\right)=\exp\left(\log\left(\frac{1}{1-T}\right)\right)=\frac{1}{1-T}
\end{equation}

\end{esempio}

\vspace{10 pt}

The plethystic exponential admits an inverse operation  $\Plelog:1+R^+ \to R^+$ known as \textit{plethystic logarithm}. The plethystic logarithm can either be defined by the property $\Plelog\Plexp(f)=f $ or by the following explicit rule. For $\alpha \in \N^I$ we put $$\overline{\alpha}\coloneqq \gcd(\alpha_i)_{i \in I}$$ and we define $U_{\alpha} \in \Q(t)$ as the coefficients appearing in the expansion: \begin{equation}
\log(f)=\sum_{\alpha \in \N^I}\frac{U_{\alpha}}{\overline{\alpha}}y^{\alpha}.
\end{equation}
Then put
 \begin{equation}
    \label{log}
    \Plelog(f)\coloneqq\sum_{\alpha \in \N^I}V_{\alpha}y^{\alpha}
\end{equation}
where \begin{equation}
    V_{\alpha}\coloneqq\dfrac{1}{\overline{\alpha}}\sum_{d| \overline{\alpha}}\mu(d)U_{\frac{\alpha}{d}}(t^d)
\end{equation}

\vspace{10 pt}

Notice that for $A,B \in 1+R^+$, the following equality holds $\Plelog(AB)=\Plelog(A)+\Plelog(B) .$

\begin{comment}
\begin{esempio}
\label{esempio2}
Let $R=\Q(q)[[T]]$ and consider the series $\displaystyle f=\sum_{i \geq 0}T^i \phi_i\left(\dfrac{t}{t-1}\right) .$

By the q-binomial theorem we deduce that $f=\displaystyle\prod_{i \geq 0}\left(\dfrac{1}{1-\frac{T}{t^i}}\right)$ and so $$\Plelog\left(f\right)=\Plelog\left(\prod_{k \geq 0}\dfrac{1}{1-\frac{T}{t^k}}\right)=\sum_{k \geq 0}\Plelog\left(\dfrac{1}{1-\frac{T}{t^k}}\right)=$$ $$=\sum_{k \geq 0}\dfrac{T}{t^k}=\dfrac{tT}{t-1} $$

\end{esempio}
\end{comment}

\vspace{10 pt}

The plethystic exponential is well defined over the subring $R_{\Z}\coloneqq \Z(t)[[y_i]]_{i \in I}$  i.e:
\begin{lemma}
\label{Z}
Given $f \in \Z(t)[[y_i]]^+_{i \in I}$ we have $\Plexp(f) \in 1+ \Z(t)[[y_i]]_{i \in I}^+$
\end{lemma}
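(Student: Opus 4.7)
The plan is to reduce the problem to a monomial computation by using the multiplicativity $\Plexp(A+B)=\Plexp(A)\Plexp(B)$, and then to observe that each monomial contribution is a geometric series with integer coefficients. First I would expand
$$f = \sum_{\alpha \in \N^I\setminus\{0\}} c_\alpha(t)\, y^\alpha, \qquad c_\alpha(t) = \sum_{j} a_{\alpha,j}\, t^j,$$
where $c_\alpha \in \Z(t)$ and, for each fixed $\alpha$, only finitely many $a_{\alpha,j} \in \Z$ are nonzero. Since $f \in R^+$, each coefficient of a given monomial $y^\beta$ in $\Plexp(f)$ depends on only finitely many $c_\alpha$, which will allow the infinite product below to converge in the $y$-adic topology.

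The key single-monomial identity, derived immediately from the definition (\ref{exp1}) and the computation in (\ref{esempio}), is
\begin{equation*}
\Plexp(t^k y^\alpha) \;=\; \exp\Ll( \sum_{n \ge 1}\frac{t^{nk} y^{n\alpha}}{n} \Rr) \;=\; \exp\bigl(-\log(1-t^k y^\alpha)\bigr) \;=\; \frac{1}{1-t^k y^\alpha},
\end{equation*}
which manifestly lies in $1 + R^+_{\Z}$. Combining this with additivity of $\Plexp$ on $R^+$ gives the formal product
\begin{equation*}
\Plexp(f) \;=\; \prod_{\alpha\ne 0,\, j} (1-t^j y^\alpha)^{-a_{\alpha,j}},
\end{equation*}
each of whose factors lies in $1 + R^+_\Z$.

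The main (and essentially only) point requiring care is justifying the passage from two-term additivity to the infinite product above. For this I would use a standard truncation: for each $N \in \N$, split $f = f_{\le N} + f_{>N}$ according to whether $|\alpha| \le N$; then $\Plexp(f_{\le N})$ is a \emph{finite} product of the factors $(1-t^j y^\alpha)^{-a_{\alpha,j}}$, hence visibly in $R_{\Z}$, while $\Plexp(f_{>N}) \equiv 1$ modulo the ideal generated by monomials $y^\beta$ with $|\beta|>N$. So $\Plexp(f_{\le N})$ and $\Plexp(f)$ agree up to $y$-degree $N$; letting $N\to\infty$ exhibits $\Plexp(f)$ as an element of $1+R^+_\Z$. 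I do not foresee any further obstacle, since everything reduces to the integrality of the coefficients of the elementary factor $(1-t^k y^\alpha)^{-1}$.
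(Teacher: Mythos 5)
The paper states Lemma \ref{Z} without proof (it is invoked as a standard integrality property of $\Plexp$), so there is no argument of the author's to compare with; your proof supplies exactly the standard one and it is correct. The chain of steps is sound: the single-monomial identity $\Plexp(t^k y^\alpha)=(1-t^k y^\alpha)^{-1}$ follows from the definition as you compute; finite additivity of $\Plexp$ (iterating $\Plexp(A+B)=\Plexp(A)\Plexp(B)$) turns any truncation $f_{\le N}$ into a finite product of factors $(1-t^j y^\alpha)^{-a_{\alpha,j}}$, each of which lies in $1+\Z[t,t^{-1}][[y_i]]^+$ whether $a_{\alpha,j}$ is positive or negative; and the observation that $\Plexp(f_{>N})\equiv 1$ modulo monomials $y^\beta$ with $|\beta|>N$ correctly identifies the coefficient of each $y^\beta$ in $\Plexp(f)$ with that of the finite product, so that $\Plexp(f)\in 1+R_\Z^+$.

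The one caveat is your reading of the coefficient ring: writing $c_\alpha(t)=\sum_j a_{\alpha,j}t^j$ with only finitely many nonzero integers $a_{\alpha,j}$ tacitly assumes the coefficients are integer (Laurent) polynomials in $t$, whereas the lemma is stated with coefficients in ``$\Z(t)$''. This reading covers every application in the paper (the inputs are Kac polynomials $a_{Q,\beta}(t)\in\Z[t]$, as in Lemma \ref{formulaV} and Proposition \ref{nonnega1}), so I would not call it a gap; but if one insists on genuinely rational coefficients with integral expansions, your argument extends routinely: expand each $c_\alpha$ as an integral (Laurent) series in $t$, note that for a fixed coefficient of $t^m y^\beta$ only the finitely many factors with $j\le m$ and $\alpha\le\beta$ contribute, and observe that the $y^\beta$-coefficient of $\Plexp(f)$ is in any case a rational function of $t$, since it is obtained from the finitely many $c_\alpha$ with $\alpha\le\beta$ by finitely many ring and Adams operations (the exponential truncates in degree $\le |\beta|$). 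It would be worth stating explicitly which interpretation of $\Z(t)$ you use, but the substance of your proof is exactly the expected one.
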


We recall also the useful lemma below (for a proof see \cite[Lemma 22]{mozgovoy}). For $g \in R$ let $$g_n\coloneqq \dfrac{1}{n}\sum_{s | n}\mu\left(\dfrac{n}{s}\right)\psi_s(g) .$$

\begin{lemma}
\label{exp}
Given $f_1,f_2 \in 1+R^+$ such that \begin{equation}
    \log(f_1)=\sum_{d \geq 1}g_d \log(\psi_d(f_2))
\end{equation}
the following equality holds
\begin{equation}
    \Plelog(f_1)=g \Plelog(f_2).
\end{equation}
\end{lemma}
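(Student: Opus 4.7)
The plan is to compute $\log f_1$ directly, using both the hypothesis and the definition of $\Plexp$, and then identify the result with $\log\Plexp\!\bigl(g\,\Plelog(f_2)\bigr)$. Set $h := \Plelog(f_2)$, so that by the definition (\ref{exp1}) one has $\log f_2 = \sum_{n \geq 1}\psi_n(h)/n$. Each Adams operation $\psi_d$ is a continuous ring endomorphism of $R$, hence it commutes with $\log$, giving $\log\psi_d(f_2) = \psi_d\log f_2 = \sum_{n \geq 1}\psi_{dn}(h)/n$. Substituting this expression into the hypothesis and reindexing by $m = dn$ yields
$$\log f_1 \;=\; \sum_{d \geq 1}g_d\sum_{n \geq 1}\frac{\psi_{dn}(h)}{n}\;=\;\sum_{m \geq 1}\frac{\psi_m(h)}{m}\sum_{d\mid m}d\,g_d.$$

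The next step is to simplify the inner sum $\sum_{d\mid m}d\,g_d$. Plugging in $d\,g_d = \sum_{s\mid d}\mu(d/s)\,\psi_s(g)$, swapping the order of summation and setting $e = d/s$, one obtains
$$\sum_{d\mid m}d\,g_d \;=\; \sum_{s\mid m}\psi_s(g)\sum_{e\mid m/s}\mu(e)\;=\;\sum_{s\mid m}\psi_s(g)\,\delta_{s,m}\;=\;\psi_m(g),$$
where we used the classical identity $\sum_{e\mid N}\mu(e) = \delta_{N,1}$. Since each $\psi_m$ is a ring homomorphism, $\psi_m(g)\psi_m(h) = \psi_m(gh)$, and therefore
$$\log f_1 \;=\; \sum_{m \geq 1}\frac{\psi_m(gh)}{m}\;=\;\log\Plexp(gh).$$
The bijectivity of $\log \colon 1+R^+ \to R^+$ then forces $f_1 = \Plexp(gh)$, i.e.\ $\Plelog(f_1) = gh = g\,\Plelog(f_2)$, as desired.

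There is no deep obstacle here: the argument is essentially a bookkeeping combination of the definition of $\Plexp$ with the Möbius inversion built into the definition of $g_d$. The only step that requires a little care is the interchange of summations in the Möbius manipulation, but this is legitimate because $f_2 \in 1+R^+$ implies $h \in R^+$, so each of the double series in sight converges in the $(y_i)_{i \in I}$-adic topology on $R$ and the order of summation can be swapped freely.
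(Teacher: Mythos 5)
Your argument is correct: the identities $\psi_d\circ\psi_n=\psi_{dn}$, the reindexing $m=dn$, and the M\"obius computation $\sum_{d\mid m}d\,g_d=\psi_m(g)$ (which is exactly the inversion of the defining relation for $g_d$) all check out, and the adic convergence remark justifies the rearrangements, so you legitimately conclude $f_1=\Plexp(g\,\Plelog(f_2))$. The paper itself offers no proof of this lemma, only a citation to Mozgovoy's Lemma 22, and your computation is the standard self-contained verification of that statement, so there is nothing to add or correct.
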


\subsection{Quiver representations}

A quiver $Q$ is an oriented graph $Q=(I,\Omega)$, where $I$ is its set of vertices and  $\Omega$ is its set of arrows. We will always assume that $I,\Omega$ are finite sets. For an arrow $a:i \to j$  in $\Omega$ we denote by $i=t(a)$ its \textit{tail} and by $j=h(a)$ its \textit{head}. Fix a field $K$. A representation $M$ of $Q$ over $K$ is given by a (finite dimensional) $K$-vector space $V_i$ for each vertex $i \in I$ and by linear  maps $M_a:M_{t(a)} \to M_{h(a)}$ for each $a \in \Omega$. 

Given two representations $M,M'$ of $Q$, a morphism $f:M \to M'$  is given by a collection of linear maps $f_i:M_i \to M'_i$ such that, for all $a \in \Omega $, the following equality holds:  $f_{h(a)} M_a=M'_a f_{t(a)} $.
The category of representations of $Q$ over $K$ is denoted by $\Rep_K(Q)$. For a representation $M$, the \textit{dimension vector} $\dim M\in \N^I$ is the vector $\dim M\coloneqq(\dim M_i)_{i \in I}  $. It is an isomorphism invariant of the category $\Rep_k(Q)$. 

For a representation $M$ of dimension $\alpha$, up to fixing a basis of the vector spaces $M_i$ for each $i \in I$, we can assume that $M_i=K^{\alpha_i}$. For $a \in \Omega$, the linear map $M_{a}:K^{t(a)} \to K^{h(a)}$ can be therefore identified with a matrix in $\Mat(\alpha_{h(a)},\alpha_{t(a)},K)$. 

Consider then the affine space $$R(Q,\alpha)\coloneqq \bigoplus_{a \in \Omega} \Mat(\alpha_{h(a)},\alpha_{t(a)},K) .$$
 We can endow $R(Q,\alpha)$ with the action of the group $\Gl_{\alpha}=\displaystyle \prod_{i \in I}\Gl_{\alpha_i}$  defined by  $$g \cdot (M_a)_{a \in \Omega}=(g_{h(a)}M_ag_{t(a)}^{-1})_{a \in \Omega} .$$ The orbits of this action are exactly the isomorphism classes of representations of $Q$ of $\dim=\alpha$.

\vspace{10 pt}

 We briefly recall also the definition of the \textit{moment map} of $Q$. Denote by $\overline{Q}$ the double quiver $\overline{Q}=(I,\overline{\Omega})$ with the same vertices of $Q$ and as set of arrows $\overline{\Omega}=\{a,a^* \ | \ a \in \Omega \}$ where  $a^*:j \to i$ for $a:i \to j$. For $\alpha \in \N^I$,  the moment map $\mu_{\alpha}$ is the morphism $$\mu_{\alpha}:R(\overline{Q},\alpha) \to \End_0(\alpha) $$ $$(x_a,x_{a^*})_{a \in \Omega} \rightarrow \sum_{a \in \Omega }[x_a,x_{a^*}] $$ where $$ \End_0(\alpha)=\{(M_i) \in \End(\alpha) \ | \ \sum_{i \in I} \tr(M_i)=0 \} .$$ 

Given $\lambda \in K^I$ such that $\lambda \cdot \alpha=0$, the element $(\lambda_iI_{\alpha_i})_{i \in I}$ (which we still denote by $\lambda$) is a central element of $\End_0(\alpha)$ and the fiber $\mu_{\alpha}^{-1}(\lambda)$ is $\Gl_{\alpha}$ invariant. We denote by  $\mathcal{Q}_{\lambda,\alpha} $ the quiver variety associated to $\lambda$, which is defined as the GIT quotient $$\mathcal{Q}_{\lambda,\alpha}\coloneqq \mu^{-1}_{\alpha}(\lambda)//\Gl_{\alpha}   .$$

The quiver stack $\mathcal{M}_{\lambda,\alpha}$ is defined  as the quotient stack $$\mathcal{M}_{\lambda,\alpha}\coloneqq [\mu^{-1}_{\alpha}(\lambda)/\Gl_{\alpha}] .$$

\subsubsection{Krull-Schmidt decomposition and endomorphism rings}

The category $\Rep_K(Q)$ is  abelian and Krull-Schimdt, i.e i.e we have the following Theorem, see for example \cite[Theorem 1.11]{kirillov}. 

\begin{teorema}
Each object $M \in \Rep_K(Q)$ admits a decomposition into a direct sum of  indecomposable ones $$M=\bigoplus_{j \in J}M_j^{n_j} $$ and such a decomposition is unique up to permuting the factors $M_j$. 
\end{teorema}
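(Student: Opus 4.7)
The plan is to prove the two assertions, existence and uniqueness, separately. Existence is elementary, while uniqueness rests on the locality of the endomorphism rings of finite-dimensional indecomposable representations, which is in turn a consequence of Fitting's lemma.

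For existence, I would argue by strong induction on the total dimension $d(M) \coloneqq \sum_{i \in I} \dim M_i$, which is a non-negative integer because objects of $\Rep_K(Q)$ are finite-dimensional at every vertex. If $M$ is indecomposable there is nothing to prove. Otherwise one can write $M = M' \oplus M''$ with $M', M'' \neq 0$, and since $d(M') + d(M'') = d(M)$ with both summands strictly smaller than $d(M)$, the inductive hypothesis yields indecomposable decompositions of $M'$ and $M''$ whose concatenation is an indecomposable decomposition of $M$.

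For uniqueness the key ingredient is Fitting's lemma: for any $f \in \End(M)$ with $M$ finite-dimensional, the chains $\Ker f^n$ and $\Imm f^n$ stabilise, and once they do one obtains an $f$-stable decomposition $M = \Ker f^N \oplus \Imm f^N$ for $N \gg 0$. When $M$ is indecomposable this forces one of the two summands to be zero, so every endomorphism of $M$ is either nilpotent or an automorphism. Consequently the nilpotent elements of $\End(M)$ form a two-sided ideal containing every non-unit, and $\End(M)$ is a (noncommutative) local ring; in particular its only idempotents are $0$ and $1$.

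With the local property in hand, uniqueness follows from the standard exchange argument. Given two decompositions $\bigoplus_{j \in J} M_j^{n_j} \cong \bigoplus_{k \in J'} N_k^{m_k}$ into indecomposables, fix an indecomposable summand $M_{j_0}$ on the left and write the identity endomorphism of $M_{j_0}$ as the sum of the compositions $M_{j_0} \hookrightarrow M \twoheadrightarrow N_k \hookrightarrow M \twoheadrightarrow M_{j_0}$ over all summands of the right-hand decomposition; since $\End(M_{j_0})$ is local, the sum of all these non-unit contributions cannot equal the identity, so at least one composition must be an automorphism of $M_{j_0}$. This exhibits $M_{j_0}$ as a direct summand of some $N_{k_0}$, and by indecomposability of $N_{k_0}$ we conclude $M_{j_0} \cong N_{k_0}$. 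One then cancels this matched pair and iterates on the smaller decompositions to match all factors with their multiplicities. The main obstacle, though entirely classical, is precisely the locality of $\End(M)$ for indecomposable $M$; once Fitting's lemma delivers that fact, the remainder is formal bookkeeping of idempotents and units.
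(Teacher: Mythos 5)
Your argument is correct and is exactly the classical Krull--Schmidt proof (existence by induction on total dimension; Fitting's lemma giving that $\End(M)$ is local for $M$ indecomposable; the exchange argument for uniqueness), which is the same route as the reference the paper cites for this statement, since the paper itself gives no proof beyond pointing to \cite[Theorem 1.11]{kirillov}. The only step you compress is the cancellation after matching $M_{j_0}$ with $N_{k_0}$ (one must check the complementary summands remain isomorphic before iterating), but that is the standard bookkeeping in this argument and poses no real gap.
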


Recall that a representation $M$ is indecomposable if and only if $\End(M)$ is a local algebra. 

Its maximal ideal is denoted by $\Rad(M) \subseteq \End(M)$. It is the set of nilpotent endomorphisms of $M$. The quotient $\End(M)/\Rad(M)$ is thus a division algebra, which is usually denoted by $\topp(M)$.

More generally, given two representations $M,N$ it is possible to define a subset  $\Rad(M,N) $ of $\Hom(M,N)$ as $$\Rad(M,N)=\{g \in \Hom(M,N) \ | \ 1+gf \in \Aut(N) \ , \forall f \in \Hom(N,M)\} .$$ If $M=N$ and $M$ is indecomposable,  $\Rad(M)=\Rad(M,M)$. In general, $\Rad(N,N)$ is still an ideal of $\End(N)$ and if $M,N$ are  non-isomorphic indecomposable representations, $\Rad(M,N)=\Hom(M,N)$. The radical is additive i.e $\Rad(M \oplus M',N)=\Rad(M,N) \oplus \Rad(M',N) .$ The following proposition holds (see \cite[Section 3.2]{repalgebras}): 

\begin{prop}
\label{rad}
Given a representation $X$  of $Q$ and an endomorphism $\phi \in \End(X)$, $\phi$ is invertible if and only if its class $\overline{\phi}$ in $\End(X)/\Rad(X)$ is invertible.
\end{prop}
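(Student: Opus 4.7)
The forward direction is immediate: if $\phi\phi^{-1}=\phi^{-1}\phi=1$ in $\End(X)$, then $\overline{\phi^{-1}}$ is a two-sided inverse of $\overline{\phi}$ in the quotient. The substance of the proposition is the converse, and my plan is the standard radical-lifting argument from the representation theory of finite-dimensional algebras.

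The main preparatory step is to verify that $\Rad(X)=\Rad(X,X)$ is a nilpotent two-sided ideal of $\End(X)$ and to identify the quotient as a semisimple algebra. Applying Krull--Schmidt, write $X\cong \bigoplus_{j\in J} M_j^{n_j}$ with the $M_j$ pairwise non-isomorphic indecomposables, and decompose $\End(X)=\bigoplus_{j,k}\Hom(M_j^{n_j},M_k^{n_k})$ accordingly. Additivity of the radical together with the identity $\Rad(M_j,M_k)=\Hom(M_j,M_k)$ for $M_j\not\cong M_k$ (both recalled in the lines preceding the statement) shows that an element of $\End(X)$ lies in $\Rad(X)$ if and only if its diagonal $(j,j)$-block lies in $\Mat_{n_j}(\Rad(M_j))$, off-diagonal blocks being unconstrained. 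This yields the algebra isomorphism
$$\End(X)/\Rad(X)\;\cong\;\prod_{j\in J}\Mat_{n_j}(\topp(M_j)),$$
a finite product of matrix algebras over division rings. Since each $\End(M_j)$ is a finite-dimensional local $K$-algebra, its maximal ideal $\Rad(M_j)$ is nilpotent, and a short block-wise bookkeeping argument---bounding the length of non-vanishing compositions in terms of $|J|$ and the nilpotence indices of the $\Rad(M_j)$---then gives nilpotence of $\Rad(X)$.

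With these facts in hand, the conclusion is quick. Given $\overline{\phi}$ invertible in $\End(X)/\Rad(X)$, choose any lift $\psi$ of its inverse; then $\psi\phi=1-r$ and $\phi\psi=1-r'$ for some $r,r'\in\Rad(X)$. Since $r,r'$ are nilpotent, $1-r$ and $1-r'$ are invertible in $\End(X)$, their inverses being the finite geometric series $\sum_{k\ge 0}r^k$ and $\sum_{k\ge 0}(r')^k$. Hence $\phi$ admits both a left and a right inverse and is therefore invertible. The main technical obstacle is the verification of nilpotence of $\Rad(X)$; once this is established, everything else is formal manipulation modulo a nilpotent ideal. An alternative that avoids Krull--Schmidt altogether is to identify $\Rad(X)$ with the Jacobson radical of the finite-dimensional algebra $\End(X)$ and invoke the standard fact that such a Jacobson radical is automatically nilpotent.
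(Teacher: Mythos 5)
Your argument is correct, but it is worth noting that the paper itself does not prove this proposition: it simply cites \cite[Section 3.2]{repalgebras}, so any complete argument is ``different from the paper's'' in the sense that the paper supplies none. Your route is the standard one, and your structural analysis of $\End(X)$ (diagonal blocks in $\Mat_{n_j}(\Rad(M_j))$, off-diagonal blocks unconstrained, quotient $\prod_j \Mat_{n_j}(\topp(M_j))$) is exactly the content of Remark \ref{isomff} in the paper, so that part duplicates material already available. The one place where you do more work than necessary is the nilpotence of $\Rad(X)$: with the paper's definition $\Rad(M,N)=\{g \mid 1+gf\in\Aut(N)\ \forall f\in\Hom(N,M)\}$, invertibility of $1-r$ for $r\in\Rad(X)$ is immediate (take $f=-\ide_X$ in the defining condition), so after lifting an inverse $\psi$ of $\overline{\phi}$ you get $\phi\psi=1-r'$ and $\psi\phi=1-r$ invertible directly, with no need for geometric series, for the Krull--Schmidt block bookkeeping, or for identifying $\Rad(X)$ with the Jacobson radical. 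Your ``block-wise bookkeeping'' sketch of nilpotence is the only slightly hand-waved step, but since you also point to the standard fact that the Jacobson radical of a finite-dimensional algebra is nilpotent (and $\Rad(X)$ is indeed that radical), the proof stands; it is simply longer than the two-line argument the definition already affords.
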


\begin{oss}
\label{isomff}
Fix a representation $X$ and its Krull-Schmidt decomposition $X=\bigoplus_{j \in J}X_j^{r_j}$. For an endomorphism $\phi \in \End(X)$, we denote by $\phi_j$ the associated element in $\End(X_j^{r_j})$ and by $\overline{\phi}_j$ the associated element in $\End(X^{r_j}_j)/\Rad(X^{r_j}_j) \cong \Mat(r_j,\topp(X_j))$. As $\Rad(X_i,X_j)=\Hom(X_i,X_j)$ for every $i \neq j$, the following isomorphism of $K$-algebras holds:
\begin{equation}
\End(X)/\Rad(X) \cong \bigoplus_{j \in J} \Mat(r_j,\topp(X_j)).
\end{equation}
\begin{equation}
    \phi \longrightarrow (\overline{\phi}_j)_{j \in J}
\end{equation}

Proposition \ref{rad} can therefore be rephrased as: $\phi$ is an isomorphism if and only if $\overline{\phi_j}$ is invertible for each $j \in J$.

\end{oss}

\subsubsection{Indecomposable over finite fields and Kac polynomials}
\label{finitefieldkac}

In this paragraph, unless explicitly specified, we assume $K=\mathbb{F}_q$. The Wedderburn's theorem implies that every finite dimensional division algebra over $\mathbb{F}_q$ is a finite field. For an indecomposable representation $M\in\Rep_{\mathbb{F}_q}(Q)$, we have therefore $\topp(M)=\mathbb{F}_{q^d}$ for some $d \geq 1$.

Fix a representation $X$ with Krull-Schimdt's decomposition $X=\bigoplus_{j \in J}X_j^{r_j} $ and integers $d_j$ such that $\topp(X_j)=\mathbb{F}_{q^{d_j}}$ for each $j \in J$. From Remark \ref{isomff}, there is a  morphism of finite groups \begin{equation}p_X:\Aut(X) \to \prod_{j \in J} \Gl_{r_j}(\mathbb{F}_{q^{d_j}}) \end{equation} \begin{equation}\phi \longrightarrow (\overline{\phi})_{j \in J} . \end{equation}
and its kernel $\Ker(p_X)$ is the subset $U_X \coloneqq \{1+f \ | \ f \in \Rad(X)\} \subseteq \Aut(X)$. In particular, all the elements inside $\Ker(p_X)$ are unipotent. 

The integers $d_j$ admit the following description in terms of \textit{absolutely indecomposable} representations (see Definition \ref{abs} below).

\begin{definizione}
\label{abs}

A representation $V$ over a field $K$ is absolutely indecomposable if $V \otimes_{K} \overline{K}$ is indecomposable.
\end{definizione}

To relate absolutely indecomposable and indecomposable representations over $\mathbb{F}_q $, we introduce the action of the Frobenius. Denote by $(\overline{\mathbb{F}}_q)_{\Fr^i}$ the set $\overline{\mathbb{F}}_q$ with the structrure of $\overline{\mathbb{F}}_q$ vector space given by $$\lambda \cdot v=\Fr^i(\lambda)v .$$ Consider an $\overline{\mathbb{F}}_q$-representation $N$. We define $\Fr^i(N)$ to be the representation $N \otimes_{\overline{\mathbb{F}}_q}(\overline{\mathbb{F}}_q)_{\Fr^i}.$ 
Over the parameter space $R(Q,\alpha)(\overline{\mathbb{F}}_q)$, the Frobenius action corresponds to the usual (geometric) Frobenius, i.e for $x \in R(Q,\alpha)(\overline{\mathbb{F}}_q)$ the representation $\Fr^i(x)$ is given by the element $F^i(x) \in R(Q,\alpha)(\overline{\mathbb{F}}_q)$, where $F:R(Q,\alpha) \to R(Q,\alpha)$ raises each entry of each matrix to its $q$-th power.

\vspace{8 pt}

Fix now an indecomposable representation $M$ over $\mathbb{F}_q$ and let $d \in \N$ be such that $\topp(M)=\mathbb{F}_{q^d}$. In \cite{kacconj} it is shown that there exists  an absolutely indecomposable representation $W$ of $Q$ over $\mathbb{F}_{q^d}$ such that $\dim(W)=\frac{\alpha}{d}$ and the following isomorphism holds :\begin{equation}
\label{rad2} M \otimes_{\mathbb{F}_q} \mathbb{F}_{q^d} \cong \bigoplus_{i=0}^{d-1} \Fr^i(W). \end{equation}

\vspace{8 pt}

By a theorem of Kac (see \cite[Theorem A]{kacconj}),  there exists a polynomial with integer coefficients $a_{Q,\alpha}(t) \in \Z[t]$ such that, for each $q$,  $a_{Q,\alpha}(q)$ is equal to the number of isomorphism classes of absolutely indecomposable representations of $Q$ over $\mathbb{F}_q$ of dimension vector $\alpha$.

Thanks to the isomorphism (\ref{rad2}), in \cite[Section 1.14]{kacconj} it is shown that the number of isomorphism classes of indecomposable representations $M$ such that $\dim M=\alpha$ and $\topp(M)=\mathbb{F}_{q^d}$ is given by \begin{equation}
    \label{rad3}
    \dfrac{1}{d}\sum_{r | d}\mu\left(\dfrac{d}{r}\right)a_{Q,\frac{\alpha}{d}}(q^r)
\end{equation}

 Moreover, Kac \cite[Theorem A,Theorem B]{kacconj} gave the following combinatorial characterization of the dimension vectors $\alpha$ such that  $a_{Q,\alpha}(t)$ does not vanish. Let $\Phi(Q) \subseteq \Z^I$ be the root system associated to $Q$ as defined by Kac \cite[Section (a)]{kacconj} and let $\Phi(Q)^+  \subseteq \N^I \cap \Phi(Q)$ the subset of the positive roots of the former root system.

\begin{teorema}
\label{comb}
The Kac polynomial $a_{Q,\alpha}(t) \neq 0$ if and only if $\alpha \in \Phi(Q)^+.$ Moreover, $a_{Q,\alpha}(t) \equiv 1$ if and only if $\alpha$ is a real root (see \cite[Section (a)]{kacconj} for a definition). Otherwise $a_{Q,\alpha}(t)$ is monic of degree $\dfrac{2-\alpha^tC\alpha}{2}$. 
\end{teorema}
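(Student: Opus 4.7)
The plan is to prove the three assertions of Theorem \ref{comb} in sequence using Kac's original strategy, combining Bernstein--Gelfand--Ponomarev reflection functors with a Weyl-group analysis of $\N^I$ and a geometric dimension count on the representation variety $R(Q,\alpha)$. The starting point is that, thanks to the orbit formula $|R(Q,\alpha)(\mathbb{F}_q)| = \sum_M |\Gl_\alpha(\mathbb{F}_q)|/|\Aut(M)|$ (sum over isomorphism classes of $\F_q$-representations) together with the Krull--Schmidt decomposition, the counting function for absolutely indecomposables is controlled by polynomials in $q$; the polynomiality of $a_{Q,\alpha}$ follows from expressing it through Möbius inversion from the count of all isomorphism classes of representations of dimension $\alpha$, which is polynomial by stratifying $R(Q,\alpha)$ into $\Gl_\alpha$-orbits.

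For the first assertion $a_{Q,\alpha}(t) \not\equiv 0 \Leftrightarrow \alpha \in \Phi^+(Q)$, the main tool is the reflection functor $\sigma_i$ at a vertex $i$ that is a sink (or source) of $Q$: it gives an equivalence between the full subcategory of $\Rep_K(Q)$ of representations with no direct summand isomorphic to the simple $S_i$ and the analogous subcategory for the quiver $Q^i$ obtained by reversing the arrows at $i$, and on dimension vectors it implements the simple reflection $s_i$. Since $\sigma_i$ is defined over any field and commutes with base change to $\overline{\F}_q$, it induces a bijection on isoclasses of absolutely indecomposable representations of dimension $\alpha \ne \epsilon_i$ with those of dimension $s_i(\alpha)$ for $Q^i$. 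By the usual alternative for $\N^I$ under the Weyl group action, any $\alpha$ can either be reduced by a sequence of such reflections to a multiple of a simple root $\epsilon_i$, or to a vector in the fundamental region $F$ of $Q$; one then shows (Kac's fundamental lemma) that every $\alpha \in F$ admits absolutely indecomposable representations, while a dimension vector reducible to $r\epsilon_i$ with $r \geq 2$ admits none.

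For the second assertion, a positive real root $\alpha$ satisfies $\alpha = w(\epsilon_i)$ for some $w$ in the Weyl group and some $i \in I$. Iterating the reflection functors along a reduced expression of $w$ identifies absolutely indecomposable representations of dimension $\alpha$ for $Q$ with absolutely indecomposable ones of dimension $\epsilon_i$ for a reoriented quiver, of which there is exactly one (the simple object). This bijection is valid over every $\F_q$, giving $a_{Q,\alpha}(q)=1$ for all $q$, hence $a_{Q,\alpha}(t) \equiv 1$.

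The hard part, and the main obstacle, is the third assertion: for an imaginary positive root $\alpha$, the polynomial $a_{Q,\alpha}$ is monic of degree $(2-\alpha^t C \alpha)/2 = 1 - \langle \alpha,\alpha\rangle$, where $\langle\cdot,\cdot\rangle$ is the symmetric Tits form. The plan is to observe that $\dim R(Q,\alpha) - \dim \Gl_\alpha = -\langle\alpha,\alpha\rangle$, so the coarse moduli/quotient stack has the expected dimension $-\langle\alpha,\alpha\rangle$; the stratum of absolutely indecomposable representations is an open substack (by openness of the indecomposable locus and the Frobenius-orbit description in Paragraph \ref{finitefieldkac}). Reducing to the fundamental region via reflection functors, one constructs explicit families of absolutely indecomposables showing that the top-dimensional stratum contributes a leading term $q^{1-\langle\alpha,\alpha\rangle}$ with coefficient $1$. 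The delicate point is proving monicity and the exact degree simultaneously; Kac's argument does this by an induction on $\alpha$ along a Weyl chamber decomposition, bounding the contributions of lower strata by polynomials of strictly smaller degree, using that $\langle s_i \alpha, s_i\alpha\rangle = \langle\alpha,\alpha\rangle$ and hence the reflection functors preserve the predicted degree. This inductive dimension-count is the step where care is required.
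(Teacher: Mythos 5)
You should first be aware that the paper does not prove this statement at all: Theorem \ref{comb} is recalled verbatim from Kac (cited as Theorems A and B of \cite{kacconj}) and used as a black box, so your attempt can only be measured against Kac's original argument, which is indeed the strategy you are sketching (reflection functors, reduction to the fundamental region, dimension counts).

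As a proof, however, your outline has genuine gaps exactly at the points that carry the theorem. First, "stratifying $R(Q,\alpha)$ into $\Gl_{\alpha}$-orbits" does not yield polynomiality of the number of isomorphism classes; what is actually needed is Burnside's formula together with the fact that fixed-point sets are affine spaces (Hua/Kac counting), and then the Krull--Schmidt/Galois-descent formalism (as in (\ref{rad2})--(\ref{rad3}) of the paper) to pass from all representations to absolutely indecomposable ones --- M\"obius inversion alone is not the mechanism. Second, the BGP functor at a sink relates $Q$ to the reoriented quiver $Q^i$, so your reduction either has to carry the changing orientation through the whole induction (which is possible, since $\Phi^+(Q)$ and $\alpha^{t}C\alpha$ depend only on the underlying graph) or must invoke Kac's separate theorem that $a_{Q,\alpha}$ is independent of orientation; as written you pass from $a_{Q^i,s_i\alpha}$ back to statements about $Q$ without justification. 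Third, and most seriously, the two assertions that make the theorem nontrivial are only named, not proved: that every vector in the fundamental region supports absolutely indecomposable representations, and that for imaginary roots the count is monic of degree $\tfrac{2-\alpha^{t}C\alpha}{2}$. Even the upper bound on the degree is not immediate from your orbit-dimension count, because $\dim\End(M)$ of an indecomposable can exceed $1$, so one must stratify by the size of the endomorphism ring and estimate the locus with large radical; the lower bound with leading coefficient exactly $1$ requires Kac's delicate induction, which you explicitly defer ("the step where care is required"). So the proposal is a faithful road map of Kac's proof, but the heart of the argument is missing rather than established.
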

Here $C=(C_{i,j})_{i,j \in I}$ is the Cartan matrix of the quiver given by $$
C_{ij}=\begin{cases} 2-2(\text{the number of edges joining $i$ to
    itself})\hspace{.2cm}\text{if }i=j\\ 
  - (\text{the number of edges joining $i$ to
    $j$})\hspace{1.2cm}\text{ otherwise}.
         \end{cases}
$$

We end by recalling the Kac conjecture which was proved by Hausel, Letellier, Rodriguez-Villegas in \cite[Corollary 1.5]{aha3} 
\begin{teorema}
\label{kacconj}
For any $\alpha \in \N^I$, the Kac polynomial $a_{Q.\alpha}(t)$ has nonnegative integer coefficients.
\end{teorema}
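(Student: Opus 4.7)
The plan is to prove positivity by identifying $a_{Q,\alpha}(t)$ with the Poincar\'e polynomial of a smooth variety (or with its intersection-cohomology analogue when the variety is singular), so that the coefficients become manifestly dimensions of cohomology vector spaces. The starting point is a generating-function identity of Hua type, obtained by combining the Krull--Schmidt decomposition with Equation~\eqref{rad3}, which expresses
\begin{equation*}
\sum_{\alpha \in \N^I} \frac{|R(Q,\alpha)(\F_q)|}{|\Gl_\alpha(\F_q)|}\, y^\alpha
\end{equation*}
as a plethystic exponential of a series whose coefficients are built out of the $a_{Q,\alpha}(q)$. Inverting via $\Plelog$ converts positivity of the $a_{Q,\alpha}(t)$ into a geometric statement about the stack of representations of $(Q,\alpha)$.

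For \emph{indivisible} $\alpha$ I would follow Crawley-Boevey and Van den Bergh. Choose $\lambda\in\Z^I$ generic with respect to $\alpha$, so that $\lambda\cdot\alpha=0$ but $\lambda\cdot\beta\neq 0$ for every $0<\beta<\alpha$; then every point of $\mu_\alpha^{-1}(\lambda)$ corresponds to a simple representation and the quiver variety $\mathcal{Q}_{\lambda,\alpha}$ is smooth. Its $\F_q$-point count is polynomial in $q$, and Deligne's purity theorem, applicable thanks to the hyperk\"ahler structure and a contracting $\C^*$-action on $\mathcal{Q}_{\lambda,\alpha}$, forces the compactly supported cohomology to be pure and concentrated in even degrees, yielding
\begin{equation*}
P_c(\mathcal{Q}_{\lambda,\alpha},t) \;=\; t^{d_\mathcal{Q}}\, a_{Q,\alpha}(t^2).
\end{equation*}
Positivity in the indivisible case is then immediate from the identification of coefficients with Betti numbers.

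For \emph{divisible} $\alpha$ no such $\lambda$ exists and $\mathcal{Q}_{\lambda,\alpha}$ is unavoidably singular. The remedy is to replace ordinary cohomology by the intersection cohomology of $\mathcal{Q}_{\lambda,\alpha}$ for generic $\lambda$ (equivalently, by the cohomology of a framed Nakajima-type resolution). Applying the Decomposition Theorem to such a resolution, one splits the pushforward of the constant sheaf into shifted intersection complexes indexed by the stratification of $\mathcal{Q}_{0,\alpha}$ whose strata correspond to semisimple representations with prescribed dimension-vector partitions $\alpha=\sum m_i\beta_i$. A M\"obius-type inversion on the poset of such partitions, combined with the indivisible case applied to each $\beta_i$, isolates $a_{Q,\alpha}(t)$ as the Poincar\'e polynomial of $IH^*(\mathcal{Q}_{\lambda,\alpha})$, yielding non-negative integer coefficients.

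The main obstacle is establishing purity of the intersection cohomology in the divisible case: polynomial-count arguments alone do not suffice for singular varieties. One must either construct an explicit contracting $\C^*$-action whose weight decomposition forces purity, or route the argument through the comet-shaped quivers that HLRV use to bridge to $\Gl_n(\F_q)$ character theory and ultimately to character varieties of punctured Riemann surfaces, whose mixed Hodge structure is independently known to be pure of Hodge--Tate type. Once purity is in hand, the decomposition-theorem bookkeeping and the indivisible case combine to yield positivity of $a_{Q,\alpha}(t)$ for all $\alpha\in\N^I$.
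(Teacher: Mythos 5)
You should first note that the paper does not prove Theorem \ref{kacconj} at all: it recalls it as a known result, citing \cite{aha3} for the general case and crediting \cite{crawley-boevey-etal} for indivisible dimension vectors, so your proposal must be measured against those proofs. Your treatment of the indivisible case is essentially the Crawley-Boevey--Van den Bergh argument and is sound in outline (generic $\lambda$ forces every point of $\mu_\alpha^{-1}(\lambda)$ to be simple, the quiver variety is smooth with polynomial count, and purity gives $P_c(\mathcal{Q}_{\lambda,\alpha},t)=t^{d_{\mathcal{Q}}}a_{Q,\alpha}(t^2)$); the only quibble is that purity comes from semiprojectivity (the contracting $\C^*$-action with proper fixed locus), not from the hyperk\"ahler structure as such.

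The divisible case is where your proposal has genuine gaps. First, the claim that "decomposition-theorem bookkeeping plus a M\"obius-type inversion on partitions $\alpha=\sum m_i\beta_i$, combined with the indivisible case for each $\beta_i$, isolates $a_{Q,\alpha}(t)$ as the Poincar\'e polynomial of $IH^*(\mathcal{Q}_{\lambda,\alpha})$" is not a bookkeeping exercise: the summands appearing in the decomposition theorem are shifted IC complexes of stratum closures with local systems and multiplicities you have not computed, and nothing in the indivisible case gives you the $\F_q$-point count of intersection cohomology of the singular variety. The identification of Kac polynomials with intersection/BPS cohomology in the divisible case is a deep theorem established only later, by cohomological Hall algebra techniques (Davison and collaborators), and cannot be extracted from the indivisible case by inversion. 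Second, your proposed remedy for purity is false: character varieties of punctured Riemann surfaces do \emph{not} have pure mixed Hodge structures --- the nontriviality of their weight filtration is precisely the content of the Hausel--Letellier--Rodriguez-Villegas conjectures --- so routing through them cannot supply the purity you need. The proof actually cited by the paper \cite{aha3} avoids singular varieties altogether: one attaches to each vertex $i$ a leg of length $\alpha_i-1$ so that the extended dimension vector becomes indivisible, takes a generic parameter so the associated Nakajima quiver variety is smooth with pure cohomology carrying an action of a product of symmetric groups, and recovers $a_{Q,\alpha}(t)$ as the generating polynomial of the corresponding isotypic component; positivity then follows exactly as in the indivisible case. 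As written, your divisible-case argument does not go through; you should either reproduce that leg-attachment argument or cite \cite{aha3}.
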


\section{Quiver representations  of level $V$}
\label{ziopad}
\subsection{Generating functions and plethystic identities} 
Let $Q=(I,\Omega)$ be a finite quiver and let $V$ be a subset of $\N^I$. For $\alpha \in \N^I$  we denote by $\mathbb{N}^I_{\leq \alpha}$ the subset $\mathbb{N}^I_{\leq \alpha}\coloneqq\{0 \leq \beta \leq \alpha \ | \ \beta \in \N^I\}$ and similarly $V_{\leq \alpha} \coloneqq\{0 \leq \beta \leq \alpha \ | \ \beta \in V\}$. 

\begin{esempio}
Given $\lambda \in \C^I$, we denote by $V_{\lambda}$ the subset  $V_{\lambda}\coloneqq\{\beta \in \N^I \ | \ \beta \cdot \lambda=0\}$, where $\cdot$ is the canonical orthogonal product on $\N^I$. Notice that for $\lambda=0$ we have $V_{\lambda}=\N^I$.

\end{esempio}

\vspace{8 pt}

To a representation $X$ of dimension $\alpha$, we associate the following subset $\mathcal{H}_{X} \subseteq \N^I_{\leq \alpha}$. Given the decomposition of $X \otimes_K \overline{K}$ into indecomposable components $$X \otimes_K \overline{K}=\bigoplus_{j \in J} Y_j^{r_j} ,$$ we define
\begin{equation}
    \label{definitionsethquiver}
    \mathcal{H}_X\coloneqq \{0 < \beta \leq \alpha \ | \ \exists j \text{ s.t } \beta= \dim Y_j\}. 
\end{equation}

For any $V \subseteq \N^I$, we give the following definition of the representations of the quiver $Q$ of level $V$.

\begin{definizione}
\label{W-generic}
A representation $X$ of dimension $\alpha$ is said to be of level $V$ if   we have $\mathcal{H}_X=V_{\leq \alpha}$. For $V=V_{\lambda}$ with $\lambda \in \C^I$, we say that $X$ is of level $\lambda$.
\end{definizione}

\vspace{8 pt}

\begin{esempio}
 Let $V=\{\alpha\}$ for a vector $\alpha \in \N^I$. A representation $X \in R(Q,\alpha)$ is of level $\{\alpha\}$ if and only if $X \otimes_K \overline{K}$ is indecomposable, i.e if and only if $X$ is absolutely indecomposable.
\end{esempio}

\vspace{8 pt}

The notion of being of level $V$ induces a partition (indexed by the subsets of $\N^I_{\leq \alpha}$) on the representations of $Q$ of dimension $\alpha$. As on the set of  subsets of $\N^I_{\leq \alpha}$ there is a natural order relation, induced by inclusion, we can also consider the filtration associated to such a partition. In particular, we give the following definition of a representation of level at most $V$.

\begin{definizione}
\label{representationsoflevelatmostV}
For a subset $V \subseteq \N^I$, a representation $X$ is said to be of level at most $V$ if it is of level $V'$ for some $V' \subseteq V$. For $V=V_{\lambda}$ we say that a representation is of level at most $\lambda$.
\end{definizione}

\begin{oss}
Notice that a representation $X$ of dimension $\alpha$ is of level at most $V$ if and only if $\mathcal{H}_X \subseteq V_{\leq \alpha}$. In particular,
 for $V=\N^I$, any representation of $Q$ is of level at most $\N^I$.
\end{oss}

\vspace{10 pt}

If $K=\mathbb{F}_q$ and $\lambda \in \C^I$, the  Definition \ref{representationsoflevelatmostV} for representations of level at most $\lambda$ is equivalent to the following one:

\begin{lemma}
\label{def}
 A representation $X$ is of level at most $\lambda$ if given its Krull-Schimdt decomposition $\displaystyle X=\bigoplus_{j \in J}X_j^{r_j}$ we have $\dim X_j \cdot \lambda =0$ for each $j \in J$.
\end{lemma}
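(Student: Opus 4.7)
The plan is to unpack the definition of $\mathcal{H}_X$ using the structural description of indecomposable representations over $\mathbb{F}_q$ recalled in \cref{finitefieldkac}, specifically the isomorphism (\ref{rad2}). Since tensoring with $\overline{\mathbb{F}_q}$ commutes with direct sums, the decomposition $X = \bigoplus_{j \in J} X_j^{r_j}$ reduces the problem to analyzing $X_j \otimes_{\mathbb{F}_q} \overline{\mathbb{F}_q}$ for each indecomposable summand.

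First, for each indecomposable $X_j$, let $d_j \in \N$ be the integer such that $\topp(X_j) = \mathbb{F}_{q^{d_j}}$. By (\ref{rad2}), there exists an absolutely indecomposable representation $W_j$ over $\mathbb{F}_{q^{d_j}}$ of dimension $\frac{\dim X_j}{d_j}$ with
\begin{equation*}
X_j \otimes_{\mathbb{F}_q} \mathbb{F}_{q^{d_j}} \cong \bigoplus_{i=0}^{d_j-1} \Fr^i(W_j).
\end{equation*}
Tensoring further by $\overline{\mathbb{F}_q}$ and using that each $\Fr^i(W_j)$ remains absolutely indecomposable (Frobenius twist preserves absolute indecomposability, as it is an invertible functor on representations), one obtains that the indecomposable summands of $X_j \otimes_{\mathbb{F}_q} \overline{\mathbb{F}_q}$ all have dimension vector $\frac{\dim X_j}{d_j}$.

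Assembling these contributions over $j \in J$, I get the explicit description
\begin{equation*}
\mathcal{H}_X = \Bigl\{ \tfrac{\dim X_j}{d_j} \,\Bigm|\, j \in J \Bigr\}.
\end{equation*}
Hence $X$ is of level at most $\lambda$, i.e.\ $\mathcal{H}_X \subseteq (V_\lambda)_{\leq \alpha}$, if and only if $\frac{\dim X_j}{d_j} \cdot \lambda = 0$ for every $j \in J$. Since $d_j$ is a nonzero integer, this is equivalent to $\dim X_j \cdot \lambda = 0$ for every $j$, which is the statement of the lemma.

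There is no real obstacle here: the lemma is a direct translation of \cref{definitionsethquiver} via the Kac-type decomposition (\ref{rad2}). The only point requiring mild care is to verify that each Frobenius twist $\Fr^i(W_j)$ stays absolutely indecomposable after base change to $\overline{\mathbb{F}_q}$, so that the dimension vectors $\frac{\dim X_j}{d_j}$ are genuinely the dimension vectors of the indecomposable components of $X \otimes_{\mathbb{F}_q} \overline{\mathbb{F}_q}$ and not of some coarser decomposition.
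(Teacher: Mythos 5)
Your proposal is correct and follows essentially the same route as the paper: the paper's proof likewise decomposes each indecomposable summand $X_j$ over $\overline{\F}_q$ into Frobenius twists $Y_j \oplus F(Y_j)\oplus\cdots\oplus F^{d_j-1}(Y_j)$ with $\dim Y_j=\frac{\dim X_j}{d_j}$ and concludes from $\dim Y_j\cdot\lambda=0 \iff \dim X_j\cdot\lambda=0$. Your only addition is to make explicit the intermediate base change $\F_{q^{d_j}}\hookrightarrow\overline{\F}_q$ and the fact that the twists stay absolutely indecomposable, which the paper leaves implicit.
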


\begin{proof}
\label{lambda}
Given the Krull-Schidmt decomposition $\displaystyle X=\bigoplus_{j \in J}X_j^{r_j}$, for each $j \in J$  there is an isomorphism $X_j \otimes_{\mathbb{F}_q} \overline{\mathbb{F}_q}=Y_j \oplus F(Y_j) \oplus \cdots \oplus F^{d_j-1}(Y_j) $ where $\topp(X_j)=\mathbb{F}_{q^{d_j}} $. The decomposition in indecomposable factors of $X \otimes_{\mathbb{F}_q} \overline{\mathbb{F}}_q$ is thus $$\displaystyle X \otimes_{\mathbb{F}_q}\overline{\mathbb{F}_q}=  \bigoplus _{j \in J}(Y_j \oplus F(Y_j) \oplus \cdots \oplus F^{d_j-1}(Y_j))$$ with  $\dim Y_j=\frac{\dim x_j}{d_j}$. We have then $\dim Y_j \cdot \lambda=0$ if and only if $\dim X_j \cdot \lambda=0$, for each $j \in J$.
\end{proof}

\vspace{10 pt}

For $\alpha \in \N^I$, the representations of $Q$ of level at most $V$ form a constructible subset of $R(Q,\alpha)$, as explained by the following proposition:
 
 \begin{prop}
 \label{constructile}
 There exists a constructible subset $R(Q,\alpha,V)\subseteq R(Q,\alpha)$ such that ,for each extension $K \subseteq L$, the set of $L$-points $R(Q,\alpha,V)(L)$ is the subset of  representations of level at most $V$ of $Q$ over $L$. If $V=V_{\lambda}$ for  $\lambda \in \C^I$, we denote $R(Q,\alpha,V_{\lambda})$ by $R(Q,\alpha,\lambda)$.
 \end{prop}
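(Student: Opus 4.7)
The plan is to exhibit $R(Q,\alpha,V)$ as a finite union of images of natural direct-sum morphisms, and apply Chevalley's theorem. Because $\N^I_{\leq\alpha}$ is finite, so is $V_{\leq\alpha}$, and hence there are only finitely many unordered tuples $\tau=(\gamma_1,\ldots,\gamma_r)$ of elements of $V_{\leq\alpha}$ with $\sum_i\gamma_i=\alpha$. By the Krull-Schmidt theorem applied to $X\otimes_L\overline L$, a representation $X$ is of level at most $V$ precisely when the multiset of dimensions of the indecomposable summands of $X\otimes_L\overline L$ is one of these tuples $\tau$.

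First I would show that, for each $\beta\in\N^I$, the locus $\mathrm{Ind}_\beta\subseteq R(Q,\beta)$ of absolutely indecomposable representations is a constructible subset defined over $K$. Its complement consists of the representations which become decomposable over $\overline K$, and coincides with the finite union, taken over decompositions $\beta=\beta'+\beta''$ with $\beta',\beta''>0$, of the images of the block-diagonal direct-sum morphisms
$$\Gl_\beta\times R(Q,\beta')\times R(Q,\beta'')\longrightarrow R(Q,\beta),\qquad(g,X',X'')\longmapsto g\cdot(X'\oplus X''),$$
each of which is constructible by Chevalley's theorem. Everything here is defined over $K$.

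Next, for each type $\tau=(\gamma_1,\ldots,\gamma_r)$ as above, I would consider the morphism
$$f_\tau\colon\Gl_\alpha\times\prod_{i=1}^r\mathrm{Ind}_{\gamma_i}\longrightarrow R(Q,\alpha),\qquad(g,(X_i))\longmapsto g\cdot\bigoplus_i X_i,$$
and let $C_\tau\subseteq R(Q,\alpha)$ denote its image, which is constructible (again by Chevalley) and defined over $K$. The desired set is then $R(Q,\alpha,V):=\bigcup_\tau C_\tau$, a finite union of constructible sets. For any extension $L/K$, an $L$-point $X$ lies in some $C_\tau(L)$ if and only if $X\otimes_L\overline L$ admits a direct-sum decomposition $\bigoplus_i Y_i$ with $Y_i$ absolutely indecomposable of dimension $\gamma_i\in V_{\leq\alpha}$; by Krull-Schmidt this is necessarily the (unique) absolutely indecomposable decomposition of $X\otimes_L\overline L$, so varying $\tau$ yields $X\in R(Q,\alpha,V)(L)$ if and only if $\mathcal H_X\subseteq V_{\leq\alpha}$.

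The main technical point is compatibility with base change: one must verify that the $K$-constructible set built above gives the correct $L$-points for \emph{every} extension $L/K$, including non-algebraic ones. This reduces to the standard fact that a constructible subset $C\subseteq Y$ defined over $K$ satisfies $C(L)=Y(L)\cap C(\overline L)$, combined with the observation that the level of $X$ depends only on the indecomposable decomposition of $X\otimes_L\overline L$ and is therefore invariant under further base change.
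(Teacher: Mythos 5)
Your construction is correct and is essentially the paper's own argument: the paper likewise realizes $R(Q,\alpha,V)$ as the image, under $(g,(M_i))\mapsto g\cdot\bigl(M_1\oplus\cdots\oplus M_r\bigr)$, of $\Gl_{\alpha}$ times products of absolutely indecomposable loci indexed by tuples of elements of $V$ summing to $\alpha$, and concludes by Chevalley's theorem. The only difference is that the paper cites Kac's result for the existence of the constructible locus $A(Q,\beta)$ of absolutely indecomposable representations (compatible with all field extensions), whereas you rederive this ingredient by writing its complement as a union of images of direct-sum morphisms; that self-contained substitute, together with your explicit remark that $C(L)=Y(L)\cap C(\overline{L})$ for constructible sets, is valid.
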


\begin{proof}
In \cite[Section 1,8]{kacconj}, Kac showed that, for any $\alpha \in \N^I$, there exists a 
 constructible subset $A(Q,\alpha) \subseteq R(Q,\alpha)$, such that, for any field extension $K \subseteq L$, the set of $L$-points $A(Q,\alpha)(L)$ is the subset of absolutely indecomposable representations over $L$.
 
 Let $\Psi_{V,\alpha}$ be the constructible subset defined by $$\Psi_{V,\alpha}=\bigoplus_{\substack{\alpha_1,\dots,\alpha_r \in V \\ \text{s.t } \alpha_1+\cdots +\alpha_r=\alpha}} A(Q,\alpha_1) \times \cdots \times A(Q,\alpha_r) \subseteq  \bigoplus_{\substack{\alpha_1,\dots,\alpha_r \in V \\ \text{s.t } \alpha_1+\cdots +\alpha_r=\alpha}} R(Q,\alpha_1) \times \cdots \times R(Q,\alpha_r).$$

Consider the morphism $$\Phi_{V,\alpha}:\Gl_{\alpha} \times \Psi_{V,\alpha} \to R(Q,\alpha)$$ defined  by $$\Phi_{V,\alpha}(g,M_1,\dots,M_r)\coloneqq g \cdot (M_1 \oplus \cdots \oplus M_r).$$ for $g \in \Gl_{\alpha}$, $M_1 \in A(Q,\alpha_1), \dots, M_r \in A(Q,\alpha_r)$ and $\alpha_1+\cdots +\alpha_r=\alpha$. We define $R(Q,\alpha,V)$ to be the image of $\Phi_{V,\alpha}$, which is a constructible subset by Chevalley's theorem.

\end{proof}

\vspace{8 pt}

\begin{esempio}
\label{Jordanquiverexample}

Consider the case where $Q$ is the Jordan quiver (the quiver with one vertex and one arrow), i.e $|I|=|\Omega|=1$. For $n \in \N$ and a field $K$, the representation space $R(Q,n)$ is given by the $n \times n$ matrices $\Mat(n,K)$. The isomorphism classes of $R(Q,n)$ correspond to the conjugacy classes of $\Mat(n,K)$.

If $\overline{K}=K$, the indecomposable representations correspond to matrices conjugated to a single Jordan block and the decomposition into indecomposable components of a representation $M \in \Mat(n,K)$ corresponds to the writing of $M$ into its Jordan form.

Consider now the subset $V=\{1\} \subseteq \N$. Notice that in this case, a representation is of level $\{1\}$ if and only if is of level at most $\{1\}$.

A matrix $M \in \Mat(n,K)$ is of level $\{1\}$ if and only if its Jordan form over $\overline{K}$ has only blocks of size $1$, i.e if and only if $M$ is diagonalizable over $\overline{K}$.

The subset $R(Q,n,\{1\}) \subseteq R(Q,n)=\Mat(n,K)$ is therefore given by the semisimple matrices of size $n$.     
\end{esempio}

\vspace{10 pt}

\begin{oss}
\label{Lemma1}

Let $k$ be an algebraically closed field of $\ch=0$ and let $\lambda$ be an element of $ k^I$. We denote by $\pi$ the projection map $\pi_{\lambda}:\mu_{\alpha}^{-1}(\lambda) \to R(Q,\alpha)$ sending $(x_a,x_{a^*})_{a \in \Omega}$ to $\pi((x_a,X_{a^*})_{a \in \Omega})=(X_a)_{a \in \Omega}$.

From the result of Crawley-Boevey \cite[Lemma 3.2]{CrB}, we deduce that a representation $x \in R(Q,\alpha) $ belongs to $\Imm(\pi_{\lambda})$ if and only if, given its Krull-Schimdt decomposition $x=\bigoplus_{j \in J}x_j$, we have $\dim x_j \cdot \lambda=0$ for each $j \in J$. For $K=\C$, we have therefore an equality $R(Q,\alpha,\lambda)=\Imm(\pi_{\lambda})$. 

\end{oss}

Altough the result of Crawley-Boevey is stated there only for algebraically closed field of $\ch=0$, the Remark above can be extended  over a finite field $\mathbb{F}_q$ of sufficiently big characteristic. More precisely, consider $\lambda \in \Z^I$ and still denote by $\lambda$ the corresponding element  of $\F_q^I$.

Using an argument similar to the one of  Proof of Lemma \ref{def}, we deduce that, if $q >>0$, we still have $\Imm(\pi_{\lambda})=R(Q,\alpha,\lambda)$.

\vspace{12 pt}

 The following Lemma provides a way to compute the number of isomorphism classes of  representations of level at most $V$ of dimension $\alpha$ over a finite field $\mathbb{F}_q$ .

\begin{lemma}
 \label{formulaV}
 For each $V \subseteq \N^I$ and $\alpha \in \N ^I$ there exists a polynomial $M_{Q,\alpha,V}(t) \in \Z[t]$ such that, for any $q$, $M_{Q,\alpha,V}(q)$ is equal to the number of isomorphism classes of  representations of level at most $V$ of dimension vector $\alpha$ over $\mathbb{F}_q$. Moreover, the following identity holds:   \begin{equation}
    \label{Hua4}
    \Plexp\left(\sum_{\beta \in V}a_{Q,\beta}(t)y^{\beta}\right)=\sum_{\alpha \in \N^I}M_{Q,\alpha,V}(t)y^{\alpha}
\end{equation}

\end{lemma}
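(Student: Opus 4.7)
The approach is to combine Krull--Schmidt with formula~(\ref{rad2}), which relates indecomposables over $\F_q$ to absolutely indecomposables over $\overline{\F_q}$, and then to match the resulting generating function against the plethystic exponential via a logarithm/Möbius comparison.

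First, given the Krull--Schmidt decomposition $X = \bigoplus_{j}X_j^{r_j}$ of a representation over $\F_q$ with $\topp(X_j) = \F_{q^{d_j}}$, formula~(\ref{rad2}) shows that the indecomposable constituents of $X \otimes_{\F_q}\overline{\F_q}$ are precisely the Frobenius twists $\Fr^i(W_j)$, each of dimension $\dim X_j / d_j$. Hence $\mathcal{H}_X = \{\dim X_j / d_j\}_j$, and $X$ has level at most $V$ iff $\dim X_j / d_j \in V$ for every $j$. Writing $e_{d,\beta}(q) := \tfrac{1}{d}\sum_{r\mid d}\mu(d/r)\,a_{Q,\beta}(q^r)$ for the number of indecomposable $\F_q$-representations of dimension $d\beta$ with $\topp = \F_{q^d}$, as given by formula~(\ref{rad3}), the Krull--Schmidt enumeration produces the infinite-product generating function
$$\sum_{\alpha \in \N^I}N_\alpha(q)\,y^\alpha \;=\; \prod_{\beta \in V}\prod_{d \geq 1}\frac{1}{(1-y^{d\beta})^{e_{d,\beta}(q)}},$$
where $N_\alpha(q)$ is the number of isomorphism classes over $\F_q$ of dimension $\alpha$ and level at most $V$.

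Next, taking logarithms and comparing with
$$\log\Plexp\!\Bigl(\sum_{\beta\in V}a_{Q,\beta}(t)y^\beta\Bigr) = \sum_{\beta\in V}\sum_{n\geq 1}\frac{a_{Q,\beta}(t^n)}{n}y^{n\beta},$$
matching coefficients of $y^{n\beta}$ reduces the identity~(\ref{Hua4}) at $t = q$ to the one-line check
$$a_{Q,\beta}(q^n) \;=\; \sum_{d\mid n}\sum_{r\mid d}\mu(d/r)\,a_{Q,\beta}(q^r),$$
which is immediate from the standard Möbius inversion $\sum_{s \mid n/r}\mu(s) = \delta_{n,r}$ after the substitution $s = d/r$.

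Finally, for the polynomiality in $t$: by multiplicativity of $\Plexp$ over sums and the identity $\Plexp(t^k y^\beta) = (1-t^k y^\beta)^{-1}$, expanding $a_{Q,\beta}(t) = \sum_k a_{\beta,k}t^k$ yields
$$\Plexp\!\Bigl(\sum_{\beta \in V}a_{Q,\beta}(t)y^\beta\Bigr) \;=\; \prod_{\beta \in V}\prod_{k \geq 0}\frac{1}{(1-t^k y^\beta)^{a_{\beta,k}}}.$$
By Theorem~\ref{kacconj} every $a_{\beta,k}$ is a non-negative integer, so the coefficient of $y^\alpha$ is a polynomial in $t$ with non-negative integer coefficients; this defines $M_{Q,\alpha,V}(t) \in \Z[t]$, and by the previous two steps $M_{Q,\alpha,V}(q) = N_\alpha(q)$ for every prime power $q$. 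The main delicate point is the Möbius bookkeeping in the log comparison; everything else is formal-series manipulation, and convergence plays no role since we work in $\Z[t][[y_i]]$.
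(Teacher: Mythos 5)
Your proof is correct and follows essentially the same route as the paper: Krull--Schmidt plus the count (\ref{rad3}) of indecomposables with prescribed top field gives the Euler-product generating function, and the logarithmic/M\"obius comparison you carry out by hand is exactly what the paper delegates to Lemma \ref{exp}. The only cosmetic difference is your appeal to Kac positivity (Theorem \ref{kacconj}) to get $M_{Q,\alpha,V}(t)\in\Z[t]$, which is a valid shortcut but heavier than needed, since polynomiality and integrality already follow from the finitely many contributing factors together with Lemma \ref{Z} (or the counting interpretation), as in the paper.
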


\begin{proof}

For $\beta \in \N^I$ denote by $a_{Q,\beta,V}(t)$ the polynomial defined by:

$$a_{Q,\beta,V}(t)=\begin{cases}
0 \ \ \text{if } \beta \not\in V\\ 
a_{Q,\beta}(t) \ \ \text{if } \beta \in V \end{cases}
.$$

The proof of the Formula (\ref{rad3}) given by Kac \cite{kacconj} can be slightly modified to show that the number of isomorphism classes of  indecomposable representations $M$ of level at most $V$ of dimension $\beta$ over $\mathbb{F}_q$ such that $\topp(M)=\mathbb{F}_{q^d}$ is equal to $$\displaystyle \dfrac{1}{d}\sum_{r |d}\mu\left(\dfrac{d}{r}\right)a_{Q_V,\frac{\alpha}{d}}(q^r) .$$ Let then $I_{Q,\beta,V}(t)$ be the polynomial defined by:

\begin{equation}
\label{Hua8}
I_{Q,\alpha,V}(t)=\sum_{d | \overline{\alpha}}\dfrac{1}{d}\sum_{r |d}\mu\left(\dfrac{d}{r}\right)a_{Q,\frac{\alpha}{d},V}(t^r)
\end{equation}
 
Notice that for any $q$, $I_{Q,\beta,V}(q)$ is equal to the number of isomorphism classes of  indecomposable representations of level at most $V$ and of dimension $\beta$ over $\mathbb{F}_q$. For each $\gamma \in \N^I$, denote by $M_{Q,\gamma,V}(t)$ the polynomials defined by the following identity:
\begin{equation}
\label{Hua7}
\sum_{\gamma \in \N^I}M_{Q,\gamma,V}(t)y^{\gamma}=\prod_{\beta \in \N^I}\exp(-\log(1-y^{\beta})I_{Q,\beta,V}(t)).
\end{equation}

As $\Rep_{\mathbb{F}_q}(Q)$ is a Krull-Schmidt category, we deduce that, for any $q$, $M_{Q,\gamma,V}(q)$ is equal to the number of isomorphism classes of  representations of level at most $V$ of dimension $\gamma$ over $\mathbb{F}_q$. Specializing the identity (\ref{Hua7}) at $t=q$ we can rewrite it as the following identity : \begin{equation}
   \label{Hua9}
   \sum_{\alpha \in \N^I}M_{Q,\alpha,V}(q)y^{\alpha}=\prod_{\alpha \in \N^I}\prod_{\substack {M \in Ind(Q,\alpha,V)/\cong \\ \topp(M)=\mathbb{F}_{q^d}}}\dfrac{1}{1-y^{\frac{\alpha}{d}(d)}}=\prod_{\alpha \in \N^I}\prod_{d | \overline{\alpha}}\psi_d\left(\dfrac{1}{1-y^{\frac{\alpha}{d}}}\right)^{\frac{1}{d}\sum_{r |d}\mu\left(\frac{d}{r}\right)a_{Q,\frac{\alpha}{d},V}(q^r)} 
    \end{equation} 
 
The right hand side of Equation (\ref{Hua9}) can  be rewritten as \begin{equation}
    \label{Hua10}
    \prod_{\alpha \in \N^I}\prod_{d | \overline{\alpha}}\psi_d\left(\dfrac{1}{1-y^{\frac{\alpha}{d}}}\right)^{\frac{1}{d}\sum_{r |d}\mu\left(\frac{d}{r}\right)a_{Q_V,\frac{\alpha}{d}}(q^r)}=\prod_{\gamma \in \N^I}\prod_{d \geq 1}\psi_d\left(\dfrac{1}{1-y^{\gamma}}\right)^{\frac{1}{d}\sum_{r |d}\mu\left(\frac{d}{r}\right)a_{Q,\gamma,V}(q^r)}
\end{equation} 

As Equation (\ref{Hua10}) holds for any $q$, we deduce that the following identity holds \begin{equation}
    \label{Hua11}
    \log\left(\sum_{\alpha \in \N^I}M_{Q,\alpha,V}(t)y^{\alpha}\right)=\sum_{\substack{\gamma \in \N^I \\ d \geq 1}}\psi_d\left(\dfrac{1}{1-y^{\gamma}}\right)(a_{Q,\gamma,V}(t))_d
\end{equation}
where $\displaystyle (a_{Q,\gamma,V}(t))_d=\dfrac{1}{d}\sum_{r| d}\mu\left(\frac{d}{r}\right)a_{Q,\gamma,V}(t^r)$. By Lemma \ref{exp} and Equation (\ref{Hua11}), we deduce finally: 
 \begin{equation}
    \Plelog\left(\sum_{\alpha \in \N^I}M_{Q,\alpha,V}(t)y^{\alpha}\right)=\sum_{\alpha \in \N^I}a_{Q,\alpha,V}(t)\Plelog\left(\dfrac{1}{1-y^{\alpha}}\right)=\sum_{\alpha \in V}a_{Q,\alpha}(t)y^{\alpha}.
\end{equation}

\end{proof}

\vspace{11 pt}
\begin{comment}
 For $\lambda \in \C^I$ as $M_{Q,\alpha,\lambda}(q)=0$ and $V=V_{\lambda},$ if $\alpha \cdot \lambda \neq 0$, we deduce the following analog of the formula for quiver moduli stack:

\end{comment}
\begin{comment}

\begin{equation}
    \label{Hua1}
    \sum_{\alpha \cdot \lambda =0}M_{Q,\alpha,\lambda}(t)y^{\alpha}=\Plexp\left(\sum_{\alpha \cdot \lambda=0}a_{Q,\alpha}(t)y^{\alpha}\right)
\end{equation}

\end{comment}

\vspace{8 pt}

\begin{esempio}

Consider the Jordan quiver $Q$ of Example  \ref{Jordanquiverexample}, $V=\{1\}$ and apply Formula (\ref{Hua4}). As $a_{Q,1}(t)=t$,  we find 
\begin{equation}
\label{Jordanquiverexample1}
\sum_{n \in \N}M_{Q,n,\{1\}}(t)y^n=\Plexp\left(ty\right)=\sum_{n \in \N}t^ny^n
\end{equation}
where the last equality comes the computation of Eq.(\ref{esempio}). We obtain therefore $$M_{Q,n,\{1\}}(t)=t^n .$$  Evaluating   $M_{Q,n,\{1\}}(t)$ at $t=q$, by Example \ref{Jordanquiverexample}, we find that the number of semisimple conjugacy classes of $\Mat(n,\F_q)$ is $q^n$. This is a classical combinatorial result (see for example \cite{Kundu}) which comes from the observation that the semisimple conjugacy classes are in bijection with the monic polynomials of $\F_q[t]$ of degree $n$.
\end{esempio}

\vspace{8 pt}

From  Lemma \ref{formulaV}, we deduce the following proposition.
\begin{prop}
\label{nonnega1}
\noindent (1)
For any subset $V \subseteq \N^I$ and $\alpha \in \N^I$, the polynomial $M_{Q,\alpha,V}(t)$ has nonnegative integers coefficients.

\noindent (2)
The polynomial $M_{Q,\alpha,V}(t)$ is non-zero if and only there exist
\begin{itemize}
    \item $\beta_1,\dots, \beta_r \in (\Phi^+(Q) \cap V)$
    \item $h_1,\dots ,h_r \in \N$
\end{itemize}
such that $h_1\beta_1+\cdots +h_r\beta_r=\alpha$

\end{prop}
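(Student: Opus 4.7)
The plan is to combine the generating function identity from Lemma \ref{formulaV} with the Kac conjecture (Theorem \ref{kacconj}) and the root-theoretic characterization of nonvanishing Kac polynomials (Theorem \ref{comb}). Both claims will follow from a single product expansion of the plethystic exponential.

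First I would write, using Theorem \ref{kacconj}, each Kac polynomial as $a_{Q,\beta}(t)=\sum_{k\geq 0}c_{\beta,k}t^{k}$ with $c_{\beta,k}\in\N$. Then the argument of the plethystic exponential in Lemma \ref{formulaV} becomes
\begin{equation*}
\sum_{\beta\in V}a_{Q,\beta}(t)\,y^{\beta}=\sum_{\beta\in V,\,k\geq 0}c_{\beta,k}\,t^{k}y^{\beta}.
\end{equation*}
Since $\Plexp$ converts sums into products, and since $\Plexp(c\,t^{k}y^{\beta})=(1-t^{k}y^{\beta})^{-c}$ for any $c\in\N$ (computed exactly as in the example leading to Formula (\ref{esempio})), I obtain the factorization
\begin{equation*}
\sum_{\alpha\in\N^{I}}M_{Q,\alpha,V}(t)\,y^{\alpha}=\prod_{\substack{\beta\in V\\k\geq 0}}\frac{1}{(1-t^{k}y^{\beta})^{c_{\beta,k}}}.
\end{equation*}

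For part (1), I would note that each factor on the right-hand side expands, by the binomial series, as a power series in $y$ with coefficients in $\N[t]$. A product of such series still has coefficients in $\N[t]$, hence $M_{Q,\alpha,V}(t)\in\N[t]$ for every $\alpha$.

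For part (2), I would observe that the factor indexed by $(\beta,k)$ contributes nontrivially (i.e.\ differs from $1$) if and only if $c_{\beta,k}>0$, which for a given $\beta\in V$ happens for some $k$ exactly when $a_{Q,\beta}(t)\neq 0$; by Theorem \ref{comb} this is equivalent to $\beta\in\Phi^{+}(Q)$. Expanding the product, the monomial $y^{\alpha}$ appears with a nonzero coefficient in $\N[t]$ if and only if $\alpha$ admits a decomposition $\alpha=\sum_{i}h_{i}\beta_{i}$ with $\beta_{i}\in\Phi^{+}(Q)\cap V$ and $h_{i}\in\N$ (the case $\alpha=0$ corresponding to the empty decomposition, consistent with $M_{Q,0,V}(t)=1$). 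Since all coefficients of every factor are nonnegative, no cancellation occurs, and this existence condition is exactly equivalent to $M_{Q,\alpha,V}(t)\neq 0$.

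There is no real obstacle here: the entire argument is a direct consequence of the positivity of Kac polynomials together with the explicit evaluation of $\Plexp$ on a single monomial. The only minor point to watch is that $\sum_{\beta\in V}a_{Q,\beta}(t)y^{\beta}$ lies in $R^{+}$ (no constant term), so Lemma \ref{Z} applies and the product expansion above is legitimate; and that nonnegativity prevents any coefficient from vanishing through cancellation, which is why part (2) reduces to a purely combinatorial statement about decompositions of $\alpha$.
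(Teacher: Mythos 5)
Your proposal is correct, and it rests on the same three ingredients as the paper's proof (Lemma \ref{formulaV}, the positivity of Kac polynomials from Theorem \ref{kacconj}, and the characterization of their nonvanishing in Theorem \ref{comb}), but it extracts positivity in a different way. The paper expands $\Plexp$ directly from its definition (\ref{exp1}): it writes $M_{Q,\alpha,V}(t)$ as a sum of terms $\frac{a_{Q,\beta_1}(t^{n_1})^{m_1}}{k_1}\cdots\frac{a_{Q,\beta_l}(t^{n_l})^{m_l}}{k_l}$ with $\sum_i m_i n_i\beta_i=\alpha$, whose coefficients are a priori only nonnegative \emph{rationals} because of the divisions by $n$ in the exponential; nonnegative integrality then requires the separate input of Lemma \ref{Z}, and part (2) is read off from which such products vanish. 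You instead use Kac positivity \emph{first} to write the argument of $\Plexp$ as $\sum c_{\beta,k}t^k y^{\beta}$ with $c_{\beta,k}\in\N$ and convert the whole generating series into the product $\prod_{\beta,k}(1-t^ky^{\beta})^{-c_{\beta,k}}$, so that every factor visibly has coefficients in $\N[t]$ and both integrality and nonnegativity come out at once (Lemma \ref{Z} becomes superfluous rather than essential); the no-cancellation argument then gives the decomposition criterion in (2) exactly as in the paper, including the harmless edge case $\alpha=0$. The two routes are of comparable length; yours buys a cleaner, manifestly integral product expansion, while the paper's is a more direct unwinding of the definition of $\Plexp$. Both correctly note (you explicitly, the paper implicitly) that the series fed into $\Plexp$ has no constant term, and both handle the ``only if'' direction of (2) by observing that positivity forbids cancellation among the contributing terms.
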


\begin{proof}
 By Lemma \ref{Z} the polynomials $M_{Q,\alpha,V}(t) $ have integer coefficients. By the Definition of $\Plexp$ and Lemma \ref{formulaV}, $M_{Q,\alpha,V}(t)$ is a sum of products of the form 
 \begin{equation}
     \label{posprop}
 \frac{a_{Q,\beta_1}(t^{n_1})^{m_1}}{k_1}\frac{a_{Q,\beta_2}(t^{n_2})^{m_2}}{k_2}\cdots \frac{a_{Q,\beta_l}(t^{n_l})^{m_l}}{k_l}
 \end{equation}
 with $k_1,\dots k_l,m_1,\dots,m_l,n_1,\dots,n_l$ positive integers such that $m_1 n_1 \beta_1+\cdots +n_l m_l \beta_l=\alpha $. Kac conjecture (see Theorem \ref{kacconj}) implies that these products have nonnegative coefficients. By Proposition \ref{comb}, we see that a product as in Equation (\ref{posprop}) is different from $0$ if and only if $\beta_1,\dots ,\beta_l \in \Phi^+(Q)$ and so we deduce property $(2)$.   
\end{proof}

\subsubsection{Quiver stacks}

Let $K=\C$ and fix $\lambda \in \C^I$. Let $\alpha \in \N^I$ and consider the associated quiver stack $\mathcal{M}_{\lambda,\alpha}$. For each $i \in \Z$, let $H^i_c(\mathcal{M}_{\lambda,\alpha})\coloneqq H^i_c(\mathcal{M}_{\lambda,\alpha},\C)$ be the compactly supported cohomology of the quotient stack $\mathcal{
M}_{\lambda,\alpha}$ with coefficients in $\C.$ We denote by $P_c(\mathcal{M}_{\lambda,\alpha},t)$ the compactly supported Poincar\'e series $$\displaystyle P_c(\mathcal{M}_{\lambda,\alpha},t)\coloneqq \sum_{i \in \Z} \dim H^{i}_c(\mathcal{M}_{\lambda,\alpha})t^i .$$ Davison \cite[Theorem B]{Bro} showed that the following equality holds: \begin{equation}
    \label{delignesimp}
    P_c(\mathcal{M}_{\lambda,\alpha},t)=t^{-2(\alpha,\alpha)}\Coeff_{\alpha}\left(\Plexp\left(\sum_{\beta \cdot \lambda=0}\dfrac{t^2}{t^2-1}a_{Q,\beta}(t^2)y^{\beta}\right)\right)
\end{equation}

where $(\alpha,\alpha)$ is the Euler form of $Q$. From  Identity (\ref{delignesimp}) and Proposition \ref{formulaV} in the case where $V=V_{\lambda}$, we deduce the following Proposition.

\begin{prop}
\label{propdelignesimp}
\begin{itemize}
    \item The quiver stack $\mathcal{M}_{\lambda,\alpha}$ (and so the quiver variety $Q_{\lambda,\alpha}$) is non empty if and only $M_{Q,\alpha,\lambda}(t) \neq 0$.
    \item The number of irreducible components of top dimension of the stack $\mathcal{M}_{\lambda,\alpha}$ is equal to the top degree coefficient of $M_{Q,\alpha,\lambda}(t)$.
\end{itemize}
\end{prop}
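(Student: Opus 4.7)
The plan is to compare, coefficient-by-coefficient in $y^\alpha$ and then in $t$, the two plethystic generating functions already in play. Set
$$F(t,y) \coloneqq \Plexp\!\Bigl(\sum_{\beta\cdot\lambda=0} a_{Q,\beta}(t)\,y^{\beta}\Bigr), \qquad G(t,y) \coloneqq \Plexp\!\Bigl(\sum_{\beta\cdot\lambda=0} \tfrac{t^2}{t^2-1}\,a_{Q,\beta}(t^2)\,y^{\beta}\Bigr).$$
By Lemma~\ref{formulaV}, $M_{Q,\alpha,\lambda}(t)=\Coeff_{y^\alpha}F(t,y)$, and by Davison's formula~(\ref{delignesimp}), $P_c(\mathcal{M}_{\lambda,\alpha},t)=t^{-2(\alpha,\alpha)}\Coeff_{y^\alpha}G(t,y)$. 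The first bullet is then the assertion that these two $y^\alpha$-coefficients have the same support in $\alpha$, and the second is the assertion that they have the same top coefficient in $t$ (since multiplication by $t^{-2(\alpha,\alpha)}$ preserves the top coefficient).

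For the first bullet, I observe that the coefficient of $y^\beta$ inside either plethystic exponent is a nonzero rational multiple of $a_{Q,\beta}(t)$, hence nonzero precisely when $\beta \in V_\lambda \cap \Phi^+(Q)$ by Theorem~\ref{comb}. The combinatorial argument of Proposition~\ref{nonnega1}(2) applies verbatim to both $F$ and $G$: $\Coeff_{y^\alpha}F$ and $\Coeff_{y^\alpha}G$ are simultaneously nonzero, with joint non-vanishing criterion $\alpha=\sum_i m_i\beta_i$ for some $m_i \in \N$ positive and $\beta_i \in V_\lambda \cap \Phi^+(Q)$. Combined with the fact that $\mathcal{M}_{\lambda,\alpha}$ is non-empty if and only if $P_c(\mathcal{M}_{\lambda,\alpha},t)\neq 0$, this yields the first bullet.

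For the second bullet, I will expand $\Plexp$ explicitly to write both $\Coeff_{y^\alpha}F(t,y)$ and $\Coeff_{y^\alpha}G(t,y)$ as sums over collections $(k_{n,\beta})$ satisfying $\sum_{n,\beta} n\,k_{n,\beta}\beta=\alpha$, the corresponding summand being a product of factors $\tfrac{1}{n^{k_{n,\beta}} k_{n,\beta}!}$ times $a_{Q,\beta}(t^n)^{k_{n,\beta}}$ (respectively $\bigl(\tfrac{t^{2n}}{t^{2n}-1}a_{Q,\beta}(t^{2n})\bigr)^{k_{n,\beta}}$). By Theorem~\ref{comb}, $a_{Q,\beta}(t)$ is monic of degree $1-(\beta,\beta)$; since $\tfrac{t^{2n}}{t^{2n}-1}=1+O(t^{-2n})$, both types of factor are monic in their top $t$-degree. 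So for a fixed collection $(k_{n,\beta})$, the leading $t$-coefficient of its contribution is the same scalar $\prod_{n,\beta}(n^{k_{n,\beta}}k_{n,\beta}!)^{-1}$ in both $F$ and $G$, even though the respective $t$-degrees differ by a global factor of $2$. Kac's positivity (Theorem~\ref{kacconj}) ensures that all these contributions are positive, so no cancellations occur among the collections realising the maximum $t$-degree, and summing over them gives equal top coefficients for $\Coeff_{y^\alpha}F$ and $\Coeff_{y^\alpha}G$. The argument is completed by invoking the standard fact that the top coefficient of the compactly supported Poincaré series of a finite-type Artin stack over $\C$ equals the number of its top-dimensional irreducible components.

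The main obstacle is the combinatorial bookkeeping required to control the top $t$-contribution through the plethystic exponential. The crucial input that makes the term-by-term matching of top coefficients between $F$ and $G$ rigorous is Kac's positivity Theorem~\ref{kacconj}, which rules out any cancellation among the leading-order terms.
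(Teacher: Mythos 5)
Your proposal is correct and is essentially the deduction the paper intends: it compares, coefficient by coefficient in $y^{\alpha}$, the plethystic formula of Lemma \ref{formulaV} (for $V=V_{\lambda}$) with Davison's identity (\ref{delignesimp}), noting that the $t$-degrees of corresponding terms are uniformly doubled so the same collections dominate, and that the prefactor $t^{-2(\alpha,\alpha)}$ does not affect the top coefficient. The only cosmetic remark is that the monicity statement of Theorem \ref{comb} already makes every leading contribution $\prod_{n,\beta}(n^{k_{n,\beta}}k_{n,\beta}!)^{-1}$ positive, so invoking Theorem \ref{kacconj} is not strictly necessary for the top-coefficient matching, though it is of course harmless.
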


\subsection{Counting  representations of level at most $V$}
\label{countingV}

For $\alpha \in \N^I$, we denote by $\Gl_{\alpha}(\F_q)$ the finite group $\Gl_{\alpha}(\F_q)\coloneqq \displaystyle \prod_{i \in I}\Gl_{\alpha_i}(\F_q)$. In the paper \cite[Theorem 1.1]{letellierDT}, Letellier linked the Kac polynomial $a_{Q,\alpha}(t)$ to the representation theory of the finite group $\Gl_{\alpha}(\F_q)$. In this paragraph, we will explain how to generalize his results to the case of  representations of level at most $V$ for certain subsets $V \subseteq \N^I$.

\vspace{8 pt}

The finite group $\Gl_{\alpha}(\F_q)$ acts on the finite set $R(Q,\alpha)(\mathbb{F}_q)$. We denote the associated complex character of $\Gl_{\alpha}(\F_q)$ by $R_{\alpha}$. Let $\det_I$ be the morphism $\det_I:\Gl_{\alpha}(q) \to (\F_q^*)^I$ which maps an element $(g_i)_{i \in I}$ to the element $(\det(g_i))_{i \in I}$. 

Given $\sigma=(\sigma_i)_{i \in I} \in \Hom(\F_q^*,\C^*)^I$ and $\delta \in \N^I$, we denote by $\sigma^{\delta}$ the element of $\Hom(\mathbb{F}_q^*,\C^*)$ defined by $$\sigma^{\delta}\coloneqq \prod_{i \in I}\sigma_{i}^{\delta_i} .$$ 

Given $\sigma \in \Hom(\mathbb{F}^*_q,\C^*)^I$, we  denote by $\mathcal{H}_{\sigma}$ the subset of $\N^I$ defined by $\mathcal{H}_{\sigma}\coloneqq\{\delta \in \N^I \ | \ \sigma^{\delta}=1\}$ and by  $\mathcal{H}_{\sigma,\alpha}$  the intersection $\mathcal{H}_{\sigma,\alpha}\coloneqq\mathcal{H}_{\sigma} \cap \N^I_{\leq \alpha}$. Finally, we denote by $\rho$ the character of $\Gl_{\alpha}(\F_q)$ defined by $$\rho((g_i)_{i \in I})\coloneqq \prod_{i \in I} \sigma_i(\det(g_i)) .$$ 

\vspace{8 pt}
\begin{oss}
\label{detrm}
Fix a dimension vector $\beta \in \N^I$, an integer $r \in \N$, an  indecomposable representation $M \in R(Q,\beta)(\mathbb{F}_q)$ and denote by $N$ the representation $N=M^{\oplus r}$. As seen at the beginning of paragraph $\cref{finitefieldkac}$, there is an isomorphism $\Aut(N)/U_N \cong \Gl_{r}(\mathbb{F}_{q^d})$ where $\topp(M)=\mathbb{F}_{q^d}$. As $U_N$ is a unipotent subgroup, the morphism $\det_I$ passes to the quotient $\Aut(N)/U_N$ and induces thus a morphism  $\det_I:\Gl_{r}(\mathbb{F}_{q^d})\to (\mathbb{F}^*_q)^I$. Its value at a matrix $A \in \Gl_{r}(\mathbb{F}_{q^d})$ is given by 
\begin{equation}
\det{}_{I}(A)=(N_{\mathbb{F}_{q^d}/\mathbb{F}_q}(\det(A)^{\frac{\beta_i}{d}}))_{i \in I} 
\end{equation}

\end{oss}

\vspace{8 pt}
The main result of this paragraph is the following Theorem:

\begin{teorema}
\label{multrep}
For a subset $V \subseteq \N^I$ and a dimension vector $\alpha \in \N^I$, if there exists an element $\sigma \in \Hom(\mathbb{F}_q^*,\C^*)^I$ such that $V_{\leq \alpha}=\mathcal{H}_{\sigma,\alpha}$, the following equality holds:

\begin{equation}
    \label{multrep1}
    \langle R_{\alpha} \otimes \rho ,1\rangle=M_{Q,\alpha,V}(q)
\end{equation}
\end{teorema}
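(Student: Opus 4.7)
The plan is to compute the inner product $\langle R_\alpha \otimes \rho, 1\rangle$ directly from the formula
$$\langle R_\alpha \otimes \rho, 1\rangle = \frac{1}{|\Gl_\alpha(\F_q)|}\sum_{g \in \Gl_\alpha(\F_q)} R_\alpha(g)\rho(g),$$
then turn it, via Burnside's lemma, into a sum over isomorphism classes of representations. Since $R_\alpha$ is the permutation character on the finite set $R(Q,\alpha)(\F_q)$ and $\rho$ is a class function, swapping the order of summation and grouping points of $R(Q,\alpha)(\F_q)$ by their $\Gl_\alpha(\F_q)$-orbits gives
$$\langle R_\alpha \otimes \rho, 1\rangle \;=\; \sum_{[M]} \langle \rho|_{\Aut(M)},\, 1_{\Aut(M)}\rangle,$$
where the sum ranges over isomorphism classes of representations of $Q$ over $\F_q$ of dimension $\alpha$.

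The next step is to show that each summand on the right is either $0$ or $1$, with value $1$ precisely when $M$ is of level at most $V$. Fix such an $M$ with Krull--Schmidt decomposition $M = \bigoplus_{j \in J} M_j^{r_j}$, where $M_j$ is indecomposable with $\topp(M_j) = \F_{q^{d_j}}$ and $\dim M_j = \beta_j$. By Remark \ref{isomff}, there is a short exact sequence
$$1 \longrightarrow U_M \longrightarrow \Aut(M) \longrightarrow \prod_{j \in J} \Gl_{r_j}(\F_{q^{d_j}}) \longrightarrow 1,$$
and $U_M$ consists of unipotent elements, so $\rho|_{U_M} = 1$ since $\rho$ factors through determinants. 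Therefore $\rho$ descends to the product $\prod_j \Gl_{r_j}(\F_{q^{d_j}})$, and by Remark \ref{detrm}, its restriction to the $j$-th factor $\Gl_{r_j}(\F_{q^{d_j}})$ equals $\sigma_j' \circ \det$, where $\sigma_j' \in \Hom(\F_{q^{d_j}}^*,\C^*)$ is defined by $\sigma_j'(x) = \sigma^{\beta_j/d_j}\bigl(N_{\F_{q^{d_j}}/\F_q}(x)\bigr)$.

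Since $\langle \chi\circ\det,1\rangle_{\Gl_r(\F_{q^{d}})}$ equals $1$ if $\chi$ is the trivial character of $\F_{q^d}^*$ and $0$ otherwise, the standard orthogonality together with the multiplicativity over $j$ yields
$$\langle \rho|_{\Aut(M)},1\rangle = \prod_{j \in J}\langle \sigma_j' \circ \det,\, 1\rangle = \begin{cases} 1 & \text{if every } \sigma_j' \text{ is trivial},\\ 0 & \text{otherwise.}\end{cases}$$
The surjectivity of the norm map $N_{\F_{q^{d_j}}/\F_q}$ gives $\sigma_j' = 1$ if and only if $\sigma^{\beta_j/d_j} = 1$, i.e.\ $\beta_j/d_j \in \mathcal{H}_\sigma$. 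By the isomorphism $M\otimes_{\F_q}\overline{\F}_q \cong \bigoplus_j \bigoplus_{i=0}^{d_j-1}\Fr^i(W_j)$ recalled in \S\ref{finitefieldkac}, the set $\mathcal{H}_M$ is exactly $\{\beta_j/d_j : j \in J\}$. Under our hypothesis $V_{\leq \alpha} = \mathcal{H}_{\sigma,\alpha}$, the condition \emph{all $\beta_j/d_j$ lie in $\mathcal{H}_\sigma$} is therefore equivalent to \emph{$\mathcal{H}_M \subseteq V$}, i.e.\ $M$ is of level at most $V$ (Definition \ref{representationsoflevelatmostV}). Summing over isomorphism classes, one reaches exactly the count $M_{Q,\alpha,V}(q)$ given by Lemma \ref{formulaV}.

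The main delicate point is the identification of $\rho|_{\Gl_{r_j}(\F_{q^{d_j}})}$ with $\sigma^{\beta_j/d_j} \circ N_{\F_{q^{d_j}}/\F_q} \circ \det$: this relies on the explicit description of $\det_I$ on $\Aut(M)/U_M$ from Remark \ref{detrm}, and is what converts the \emph{global} condition $V_{\leq \alpha} = \mathcal{H}_{\sigma,\alpha}$ into the \emph{local} vanishing of each character on each Jordan block. The remaining steps are standard orbit-counting and character-orthogonality manipulations.
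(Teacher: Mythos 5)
Your proposal is correct and follows essentially the same route as the paper: both reduce $\langle R_\alpha\otimes\rho,1\rangle$ to an orbit count by showing, via the Krull--Schmidt decomposition, the quotient $\Aut(M)/U_M\cong\prod_j\Gl_{r_j}(\F_{q^{d_j}})$ and the description of $\det_I$ in Remark \ref{detrm}, that $\rho$ restricts trivially to $\Aut(M)$ exactly when every $\beta_j/d_j$ lies in $\mathcal{H}_{\sigma,\alpha}=V_{\leq\alpha}$, i.e.\ when $M$ has level at most $V$. The only difference is cosmetic: you group by isomorphism classes at the outset and compute $\sum_{[M]}\langle\rho|_{\Aut(M)},1\rangle$, whereas the paper sums over points and invokes Burnside's formula at the end.
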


\begin{proof}

 Fix a representation $x$ inside $R(Q,\alpha)$: we start by showing that $ \rho|_{\Stab(x)} \cong 1$ if and only if $ x \in R(Q,\alpha,V)$. Consider the Krull-Schmidt decomposition $$x=\bigoplus_{j \in J}x_j^{r_j} .$$ Let $\beta_j$ be the dimension vector $\dim x_j$ and $d_j$ the integer such that $\mathbb{F}_{q^{d_j}}=\topp(x_j)$ for $j \in J$.
 
 As explained at the beginning of paragraph \cref{finitefieldkac}, quotienting by the subgroup  $U_x \subseteq \Stab(x)$ there is an isomorphism $\Stab(x)/U_x \cong \displaystyle \prod_{j \in J} \Gl_{r_j}(\mathbb{F}_{q^{d_j}}) $. The character $\rho$ is trivial over $U_x$ and induces therefore a character $\rho:\displaystyle \prod_{j \in J} \Gl_{r_j}(\mathbb{F}_{q^{d_j}}) \to \C^*$ which by Remark \ref{detrm} is given by $$\rho((A_j))_{j \in J}=\displaystyle \prod_{j \in J} \sigma^{\frac{\beta_j}{d_j}}(N_{\mathbb{F}_{q^{d_j}}/\mathbb{F}_q}(A_j)) .$$
 
 Therefore, we deduce that $\rho|_{\Stab(x)} \equiv 1 $ if and only if $ \dfrac{\beta_j}{d_j} \in \mathcal{H}_{\sigma,\alpha}=V_{\leq \alpha}$ for each $j \in J$. This is exactly the condition that must hold for $x$ to be of level at most $V$.

\vspace{8 pt}
From the discussion above, we deduce therefore that it holds: \begin{equation}
\label{eqcounting}
\displaystyle \langle R_{\alpha} \otimes \rho,1\rangle=\dfrac{1}{|\Gl_{\alpha}(\F_q)|}\sum_{x \in R(Q,\alpha)(\F_q)}\sum_{g \in \Stab(x)}\rho(g) =\dfrac{1}{|\Gl_{\alpha}(\F_q)|}\sum_{x\in R(Q,\alpha,V)(\F_q)}|\Stab(x)| .\end{equation} Applying the Burnside formula to the RHS of eq.(\ref{eqcounting}) we obtain thus the equality $$\langle R_{\alpha} \otimes \rho,1\rangle=M_{Q,\alpha,V}(q) .$$

\end{proof}

\section{Tensor product of characters of finite general linear group}
In this section, we recall how to construct the semisimple split characters of $\Gl_n(\F_q)$ via Harisha-Chandra induction. We begin by  fixing some notations and recalling some generalities about $\Gl_n(\F_q)$.

\subsection{Notations}
\label{introrep}

Given two $\overline{\F}_q $ varieties $X,Y$ endowed with Frobenius morphisms $F_X:X \to X$, $F_Y:Y \to Y$ respectively, we write $f:(X,F_X) \to (Y,F_Y)$ for a morphism $f:X \to Y$ commuting with the Frobenius maps.

\vspace{8 pt}

For any $n$, we denote by $\Gl_n$ the general linear group over $\overline{\F}_q$. The group $\Gl_n$ is endowed with the standard Frobenius morphism $F:\Gl_n \to \Gl_n$ where $F((a_{i,j}))=(a_{i,j}^q)$. The finite general linear group $\Gl_n(\F_q)$ is thus the finite subgroup $\Gl_n^F \subseteq \Gl_n$.  

For a $1$-parameter subgroup $\eta: \mathbb{G}_m \to \Gl_n$ the \textit{weight} $|\eta|$ is defined as the integer $|\eta | \in \Z$ such that we have \begin{align}\label{def||}
    \det (\eta):\mathbb{G}_m \to \mathbb{G}_m 
               \\      z \to z^{|\eta|} \end{align}

\vspace{8 pt}

For any integers $n,d \geq 1$, we denote by $F_d:\Gl_n^d \to \Gl_n^d$ the (twisted) Frobenius $$F_d(x_1,\dots,x_d)=(F(x_d),F(x_1),\dots,F(x_{d-1})) .$$  Notice that $$(\Gl_n^d)^{F_d}=\Gl_n(\F_{q^d}) .$$

We define more generally the \textit{weight} for an homomorphism $\eta:(\mathbb{G}^d_m,F_d) \to (\Gl_n,F)$ as the integer given by\begin{equation} 
\label{def||2}
|\eta|\coloneqq \dfrac{|\eta( \Delta)|}{d}
\end{equation}
where $\Delta: \mathbb{G}_m \to (\mathbb{G}_m)^d$ is the diagonal embedding. The weight $|\eta|$ is an integer as $\eta$ is defined over $\F_q$.

\subsection{$F$-stable Levi subgroups}
\label{typeM1}

For any $F$-stable Levi subgroup $M \subseteq \Gl_n$, there exist integers $d_1,\dots ,d_r,m_1,\dots ,m_r$ such that $m_1d_1+\cdots +m_rd_r=n$ and such that \begin{equation}\label{isoLevi}
(M,F) \cong ((\Gl_{m_1})^{d_1} \times \dots \times (\Gl_{m_r})^{d_r},F_{d_1} \times \cdots \times F_{d_r}). \end{equation} 

We say that $M$ is \textit{split} if $d_1=\cdots=d_r=1$.

The isomorphism (\ref{isoLevi}) implies that there is an isomorphism  $$M^F=M(\mathbb{F}_q) \cong \displaystyle \prod_i \Gl_{m_i}(\mathbb{F}_{q^{d_i}})$$ and an isomorphism \begin{equation}
\label{centerlevi}     
 (Z_M,F) \cong  ((\mathbb{G}_m)^{d_1} \times \dots \times (\mathbb{G}_m)^{d_r},F_{d_1} \times \cdots \times F_{d_r}). \end{equation}

 Notice that, for each $j=1,\dots,r$,  the integer $m_j$ is equal to the  weight $|i_j|$ of the embedding $i_j:(\mathbb{G}_m^{d_j},F_{d_j}) \hookrightarrow (\Gl_n,F)$ induced by the isomorphism of eq.(\ref{centerlevi}).

\subsection{Harisha-Chandra induction for split Levi subgroups}
\label{splitdef}

Fix $n \in \N$ and let $G=\Gl_n$. Let $L \subseteq G$ be the split Levi subgroup  $\Gl_{n_0} \times \cdots \times \Gl_{n_s}$ embedded diagonally into $\Gl_n$ i.e
 $$L=\begin{pmatrix}
    \Gl_{n_{s}} &0 &0 &0 &0 &\dots &0\\
    0 &\Gl_{n_{s-1}} &0 &0 &0 &\dots &0 \\
    0  &0 &\Gl_{n_{s-2}} &0 &0 &\dots &0\\
    \vdots &\vdots &\vdots &\ddots &\dots &\dots &0  \\
    0 &0 &0 &0 &0 &0 &\Gl_{n_0}
    \end{pmatrix} .$$

The finite group $L^F$ is therefore isomorphic to $$L^F = \Gl_{n_0}(\F_q) \times \cdots \times \Gl_{n_s}(\F_q) .$$

Let $P$ be the $F$-stable parabolic subgroup containing $L$ and the upper triangular matrices.

Recall that the quotient $G/P$ is identified with the variety of flags inside $\overline{\mathbb{F}}_q$: \begin{equation}\label{flagover}
G/P=\{ F_s \subseteq F_{s-1} \subseteq \cdots \subseteq F_{0}= \overline{\mathbb{F}}_q^n \ : \ \dim(F_i)=\sum_{j=i}^s n_j \}. \end{equation}

The $\mathbb{F}_q$-rational points $(G/P)^F$ are thus identied with flags of vector subspaces of $\mathbb{F}_q^n$ \begin{equation}\label{flag}(G/P)^F=G^F/P^F=\{F_s \subseteq F_{s-1} \subseteq \cdots \subseteq F_{0}= \mathbb{F}_q^n \ : \ \dim(F_i)=\sum_{j=i}^s n_j \}.  \end{equation}

Denote by $\pi_L$ the quotient map $\pi_L:P \to L$. Consider now a linear character $\gamma: L^F \to \C^*$, given by $$\displaystyle \gamma((M_0,\dots,M_s))=\prod_{j=0}^s \gamma_j(\det(M_j)) ,$$ where $M_j \in \Gl_{n_j}(\F_q)$ for $j=0,\dots,s$ and the $\gamma_j$'s are homomorphisms $\gamma_j:\mathbb{F}^*_q \to \C^*$. We will usually denote by $\gamma$ also the $s+1$-uple $\gamma=(\gamma_0,\dots,\gamma_s) \in \Hom(\F_q^*,\C^*)^{s+1}$. We define the associated character $R^G_L(\gamma)$ of $G^F$ by the following rule: \begin{equation}
\label{delignelusztig}
R^G_L(\gamma)(g)=\sum_{\substack{h \in G^F/P^F \\ h^{-1}gh \in P^F}}\gamma(\pi_L(h^{-1}gh)).
\end{equation}

The  character $R^G_L(\gamma)$ is the character of the induced representation $\Ind_{P^F}^{G^F}(\gamma(\pi_L))$. Notice that, for $h \in G^F/P^F$ and $g \in G^F$, having $h^{-1}gh \in P^F$ is equivalent to having that  $g \cdot h=h$ for the action  by  left multiplication of $G^F$ on $G^F/P^F$. Via the identification (\ref{flag}), the subset $(G^F/P^F)^g$ given by the points fixed by $g$  corresponds to $g$-stable flags.

\vspace{10 pt}

We will call the characters of $G^F$ of the form $R^G_L(\gamma)$ (not necessarily irreducible)  for some $L$ and $\gamma:L^F \to \C^*$ the Harisha-Chandra characters. If $R^G_L(\gamma)$ is irreducible, we will call it a semisimple split irreducible character.

\vspace{10 pt}

The character $R^G_L(\gamma)$  is irreducible if and only if  $\gamma_i \neq \gamma_j$ for every $i \neq j$ (see \cite[Section 3]{LSr}). In \cref{irred}, we show how this classical result can be deduced from the main Theorem of this paper.

\subsection{Generic $k$-tuples of Harisha-Chandra characters}

 Hausel, Letellier, Rodriguez-Villegas \cite[Definition 2.2.5]{AH} gave the following definition of genericity for $k$-tuples of Harisha-Chandra  characters of $\Gl_n(\F_q)$. 
\begin{definizione}
\label{generic1}
We say that the a $k$-tuple $\mathcal{X}=(\Ch_1,\dots,\Ch_k)$ is generic, where $\Ch_i=R^G_{L_i}(\delta_i)$, if for any $F$-stable Levi subgroup $M \subseteq \Gl_n$ and $g_1,\dots ,g_k \in \Gl_n(\F_q)$ such that $Z_M \subseteq g_iL_ig_i^{-1}$, the character $\Gamma_M$ of $Z_M^F$ defined as $$\Gamma_M(z)=\prod_{i=1}^k \delta_i(g_izg_i^{-1})$$ for $z \in Z_M^F$, is a generic linear character of $Z_M^F$.

By this, it is meant that $\Gamma_M|_{Z_G^F} $ is trivial and for any $F$-stable $M \subseteq M' \subsetneq \Gl_n$ the restriction $\Gamma_M|_{Z_{M'}^F}$ is non trivial.

\end{definizione}

\vspace{8 pt}

Definition (\ref{generic1}) is actually stated in \cite{AH} only for $k$-tuples  of semisimple split  irreducible characters, but it translates \textit{verbatim} to a general $k$-tuple of Harisha-Chandra characters.

\begin{esempio}
Consider a $k$-tuple of irreducible characters $(\alpha_1 \circ \det,\dots,\alpha_k \circ \det)$, with $\alpha_i\in \Hom(\F_q^*,\C^*)$. This $k$-tuple is generic if and only if the element $\alpha_1 \cdots \alpha_k$ has order $n$.
\end{esempio}

\subsection{Main result}
\label{charss}
 Fix an integer $g \geq 0$ and a $k$-tuple $(L_1,\dots,L_k)$  of (split) $F$-stable Levi subgroups of $\Gl_n$  with $$L_j=\Gl_{n_{j,0}}  \times \cdots \times \Gl_{n_{j,s_j}} .$$ Let $Q=(I,\Omega)$ be the following \textit{star-shaped quiver}:

 \begin{center}
    \begin{tikzcd}[row sep=1em,column sep=3em]
    & &\circ^{[1,1]} \arrow[ddll,""] &\circ^{[1,2]} \arrow[l,""]  &\dots \arrow[l,""] &\circ^{[1,s_1]} \arrow[l,""]\\
    & &\circ^{[2,1]} \arrow[dll,""] &\circ^{[2,2]} \arrow[l,""] &\dots \arrow[l,""] &\circ^{[2,s_2]} \arrow[l,""]\\
    \circ^0 \arrow[out=170,in=200,loop,swap] \arrow[out=150,in=210,loop,"\cdots"] \arrow[out=140,in=220,loop,swap]  & &\cdot &\cdot\\
    & &\cdot &\cdot\\
    & &\cdot &\cdot\\
    & &\circ^{[k,1]} \arrow[uuull,""] &\circ^{[k,2]} \arrow[l,""]  &\dots \arrow[l,""] &\circ^{[k,s_k]} \arrow[l,""]
    \end{tikzcd}
\end{center}
 
\vspace{12 pt} 
 We will denote the vertex $0$ also by $[i,0]$ for $i=1,\dots,k$. 
 Let $\alpha \in \N ^I$ be the dimension vector defined as $\alpha_0=n$ and $\alpha_{[i,j]}=n-\displaystyle\sum_{h=1}^j n_{i,h}$. 
 
 Let $(\N^I)^*\subseteq \N^I$ be the subset of dimension vectors which are non increasing along the legs. Notice that $\alpha \in (\N^I)^*$. 
 
 For any $\beta \in \N^I$, denote by  $R(Q,\beta)^*\subseteq R(Q,\beta)$ the representations which have injective maps along the legs. Notice that if $\beta \not\in (\N^I)^*$, we have $R(Q,\beta)^*=\emptyset$.
 
.

\begin{oss}
\label{indinj}
Notice that if $M \in \Rep_K(Q)$ is an indecomposable representation such that $(\dim M)_0 \neq 0$, then all the maps of $M$ along the legs are injective, for more details see for instance \cite[Lemma 3.2.1]{AH}. In particular $\dim M \in (\N^I)^*$.
\end{oss}

From Remark \ref{indinj} above, we deduce that for a subset $V \subseteq (\N^I)^*$ and any $\beta \in (\N^I)^*$, the  representations $R(Q,\beta,V)$ of level $V$  are all contained in $R(Q,\beta)^*$. In particular, for $V=(\N^I)^*$ we have an identity $R(Q,\beta,(\N^I)^*)=R(Q,\beta)^{*}$. The Formula (\ref{Hua4}) implies thus the following identity:
\begin{equation}
\label{injective}
\sum_{\beta \in (\N^I)^*}M_{Q,\beta}^*(t)y^{\alpha}=\Plexp\left(\sum_{\beta \in (\N^I)^*}a_{Q,\beta}(t)y^{\beta}\right)
\end{equation}
where the polynomials $M_{Q,\beta}^*(t)$ are such that, for any $q$, $M_{Q,\beta}^*(q)$ is equal to the number of isomorphism classes of representations of dimension $\beta$ with injective maps along the legs over $\mathbb{F}_q$.  Identity (\ref{injective}) had already been shown in \cite[Proposition 3.2.2]{AH}.

\subsubsection{Levels for $k$-tuples of characters}
\label{parameter}
Consider now, for each $i=1,\dots ,k$, a character $\gamma_i=(\gamma_{i,0},\dots ,\gamma_{i,s_i}):L_i^F \to \C^*$. To the $k$-tuple of characters $\mathcal{X}=\displaystyle (R^G_{L_i}(\gamma_i))_{i=1}^k$ we associate an element  $\sigma_{\mathcal{X}} \in \Hom(\mathbb{F}_q^*,\C^*)^I$ defined as:

\begin{equation}\label{definitionsigma}
(\sigma_{\mathcal{X}})_{[i,j]}\coloneqq\begin{cases}\displaystyle \prod_{i=1}^k \gamma_{i,0} \text{ if } j=0 \\ \gamma_{i,j}\gamma_{i,j-1}^{-1} \text{ otherwise}
\end{cases} .\end{equation}

In this context, we define the  subset $\mathcal{H}_{\sigma_{\mathcal{X}},\alpha}^* \subseteq (\N^I)^*$ as follows
$$\mathcal{H}_{\sigma_{\mathcal{X}},\alpha}^*:=\{\delta \in (\N^I)^* \ | \ 0< \delta \leq \alpha \ , \ \sigma_{\gamma}^{\delta} =  1\} .$$

\vspace{10 pt}

For a subset $V \subseteq (\N^I)^*$, we give the following definition of a $k$-tuple $(R^G_{L_1}(\gamma_1),\dots,R^G_{L_k}(\gamma_k))$ of level $V$.

\begin{definizione}
\label{defgeneric}
 The $k$-tuple $\displaystyle (R^G_{L_{1}}(\gamma_1),\dots,R^G_{L_k}(\gamma_k))$ is said to be of level $V$ if $\mathcal{H}_{\sigma_{\mathcal{X}},\alpha}^*=V_{\leq \alpha}$. For $\lambda \in \C^I$, we say that $(R^G_{L_{1}}(\gamma_1),\dots,R^G_{L_k}(\gamma_k))$ is of level $\lambda$ if it is of level $V_{\lambda}$.
\end{definizione}

\begin{oss}
Notice that any $k$-tuple $\mathcal{X}=(R^G_{L_1}(\gamma_1),\dots,R^G_{L_k}(\gamma_k))$ is automatically of level $\mathcal{H}^*_{\sigma_{\mathcal{X}}.\alpha}$.

\end{oss}

\vspace{8 pt}

\begin{esempio}
\label{esempio3}
Notice the $k$-tuple of split Levi characters $(R^G_{L_i}(1))_{i=1}^k$ is of level $(\N^I)^*$. In this case indeed $\sigma_{\mathcal{X}}=1$ and so $\mathcal{H}_{\sigma_{\mathcal{X}},\alpha}^*=(\N^I)^*_{\leq \alpha}$.
\end{esempio}
\vspace{8 pt}

We have the following Lemma for generic $k$-tuples of Harisha-Chandra characters.

\begin{lemma}
\label{generic2}
For a $k$-tuple of Harisha-Chandra characters $\mathcal{X}=(R^G_{L_{1}}(\gamma_1),\dots,R^G_{L_k}(\gamma_k))$, if  $\mathcal{H}_{\sigma_{\mathcal{X}},\alpha}^*=\{\alpha\}$, then $\Ch$ is generic as in Definition \ref{generic1}. On the other side, if the $k$-tuple $\Ch$ is generic, there are no elements $\delta, \epsilon \in \mathcal{H}_{\sigma_{\mathcal{X}},\alpha}^*\setminus \{\alpha\}$ such that $\delta+\epsilon=\alpha$.
\end{lemma}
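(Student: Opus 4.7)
The plan is to relate each $\delta \in (\N^I)^*$ with $0 < \delta \leq \alpha$ to compatible refinement data for the Levi subgroups $L_i$. Setting $\tilde m_{i,j}(\delta) := \delta_{[i,j]} - \delta_{[i,j+1]}$ with the conventions $\delta_{[i,0]} := \delta_0$ and $\delta_{[i,s_i+1]} := 0$, unfolding Equation (\ref{definitionsigma}) gives $\sigma_{\mathcal{X}}^\delta = \prod_{i,j}\gamma_{i,j}^{\tilde m_{i,j}(\delta)}$ (the $\tilde m_{i,j}(\delta)$ are nonnegative with $\sum_j \tilde m_{i,j}(\delta) = \delta_0$). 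When both $\delta$ and $\epsilon := \alpha - \delta$ lie in $(\N^I)^*$, the inequalities $\tilde m_{i,j}(\delta) \leq \tilde m_{i,j}(\alpha)$ hold, exhibiting a refinement of each $L_i$-block into a $\delta$-part of total size $\delta_0$ and an $\epsilon$-part of total size $\epsilon_0 = n - \delta_0$. This data is exactly what is needed to realise a split Levi $M = \Gl_{\delta_0} \times \Gl_{\epsilon_0} \subseteq \Gl_n$ together with conjugating elements $g_1,\dots,g_k \in \Gl_n(\F_q)$ with $Z_M \subseteq g_iL_ig_i^{-1}$, and so that $\Gamma_M(z_1,z_2)$ factors as $\sigma_{\mathcal{X}}^\delta(z_1)\cdot \sigma_{\mathcal{X}}^\epsilon(z_2)$ on the two $\F_q$-rational one-parameter subgroups $(tI_{\delta_0},I_{\epsilon_0})$ and $(I_{\delta_0},tI_{\epsilon_0})$ of $Z_M$.

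\textbf{Proof of (b).} Assuming $\mathcal{X}$ is generic, I would argue by contradiction. Given $\delta, \epsilon = \alpha - \delta \in \mathcal{H}^*_{\sigma_{\mathcal{X}},\alpha} \setminus \{\alpha\}$, one has $0 < \delta_0, \epsilon_0 < n$, so $M = \Gl_{\delta_0} \times \Gl_{\epsilon_0}$ is a proper Levi of $\Gl_n$. Using the Strategy paragraph to construct $g_1,\dots,g_k$, a direct computation would give $\Gamma_M(z_1,z_2) = \sigma_{\mathcal{X}}^\delta(z_1)\sigma_{\mathcal{X}}^\epsilon(z_2) = 1$ for all $(z_1,z_2) \in Z_M^F$. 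This contradicts the non-triviality of $\Gamma_M|_{Z_{M'}^F}$ for $M' = M \subsetneq \Gl_n$ required by genericity.

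\textbf{Proof of (a).} I would argue the contrapositive: non-genericity of $\mathcal{X}$ produces a valid $(M,(g_i))$ with either (i) $\Gamma_M|_{Z_G^F}$ non-trivial, or (ii) $\Gamma_M|_{Z_{M'}^F}$ trivial for some $M \subseteq M' \subsetneq \Gl_n$. In case (i), since $Z_G$ is fixed pointwise by conjugation, $\Gamma_M|_{Z_G^F}$ is independent of $(M,(g_i))$ and corresponds via the Strategy paragraph to $\sigma_{\mathcal{X}}^\alpha$ evaluated on the central one-parameter subgroup $t \mapsto tI_n$; non-triviality forces $\sigma_{\mathcal{X}}^\alpha \neq 1$, so $\alpha \notin \mathcal{H}^*_{\sigma_{\mathcal{X}},\alpha}$. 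In case (ii), I would replace $M$ by $M'$ to reduce to $M = M' \subsetneq \Gl_n$ with $\Gamma_M|_{Z_M^F} \equiv 1$. Since $M \subsetneq \Gl_n$ forces $\rank(Z_M) \geq 2$, one can pick an $F$-equivariant non-central one-parameter subgroup $\eta : (\mathbb{G}_m^d, F_d) \to (Z_M, F)$ of weight $|\eta| \in (0,n)$ via (\ref{def||2}); recording the weights of $\eta$ on each $L_i$-block through $g_i$ then yields $\delta(\eta) \in (\N^I)^*$ with $\delta(\eta)_0 = |\eta|$, so $\delta(\eta) \neq \alpha$, and the triviality of $\Gamma_M \circ \eta$ translates to $\sigma_{\mathcal{X}}^{\delta(\eta)} = 1$, giving $\delta(\eta) \in \mathcal{H}^*_{\sigma_{\mathcal{X}},\alpha} \setminus \{\alpha\}$.

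\textbf{Main obstacle.} The hardest step is the construction of $\delta(\eta)$ in case (ii) of part (a) when the Levi $M$ is non-split: there, the twisted-torus formalism of \cref{typeM1} is needed to choose an $\F_q$-rational one-parameter subgroup of the right weight, to verify that its block-wise weights assemble into a vector in $(\N^I)^*$, and to match $\Gamma_M \circ \eta$ with $\sigma_{\mathcal{X}}^{\delta(\eta)}$ under the cyclic correspondence between quiver-vertex labels and $L_i$-block labels induced by the twisted Frobenius $F_d$. Once this bookkeeping is handled, both implications reduce to direct character computations on the generating one-parameter subgroups.
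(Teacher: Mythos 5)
Your proposal is correct and follows essentially the same route as the paper: you unfold $\sigma_{\mathcal{X}}^{\delta}$ into block exponents $\tilde m_{i,j}(\delta)$, prove the converse by realising $M=\Gl_{\delta_0}\times\Gl_{\epsilon_0}$ with embeddings of prescribed block weights so that $\Gamma_M=\sigma_{\mathcal{X}}^{\delta}\otimes\sigma_{\mathcal{X}}^{\epsilon}\equiv 1$, and prove the first implication by showing any failure of genericity produces either $\sigma_{\mathcal{X}}^{\alpha}\neq 1$ or an element of $\mathcal{H}^*_{\sigma_{\mathcal{X}},\alpha}$ with central coordinate $<n$, exactly the content of the paper's Equations (\ref{generic3})--(\ref{generic5}). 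The only differences are cosmetic: you argue part (a) contrapositively and restrict $\Gamma_M$ to a single $F$-rational factor of $Z_M$ (handled via the norm, as in the paper) rather than computing on all factors at once.
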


\begin{proof}
Pick $M \subseteq G$ an $F$-stable Levi subgroup and let $m_1\dots,m_r,d_1,\dots,d_r$ be the non-negative integer associated to $M$ as in \cref{typeM1}. There is therefore an isomorphism $$(Z_M,F) \cong ((\mathbb{G}_m)^{d_1} \times \cdots \times (\mathbb{G}_m)^{d_r},F_{d_1}\times \cdots \times F_{d_r}) .$$ There exist elements $g_1,\dots ,g_k \in G^F$ such that $g_iZ_Mg_i^{-1} \subseteq L_i$ if and only if there exist $k$  embeddings  $$\lambda_i:((\mathbb{G}_m)^{d_1} \times \cdots \times (\mathbb{G}_m)^{d_r},F_{d_1}\times \cdots \times F_{d_r}) \hookrightarrow (L_i,F) $$ respecting the condition about \textit{weights} we will explicitate in Equation (\ref{condition}) below. For $j=1,\dots,r$, we denote by $\lambda_i^j:((\mathbb{G}_m)^{d_j},F_{d_j}) \to (L_i,F)$ the restriction of $\lambda_i$ to the subgroup $\{1\} \times \cdots \times (\mathbb{G}_m)^{d_j} \times \{1\} \times \cdots $ so that $\lambda_i=\displaystyle \prod_{j=1}^r \lambda_i^j$. 

The composition of $\lambda_i^j$ and the inclusion $L_i \subseteq G$ defines a morphism which we still denote by $\lambda_i^j:((\mathbb{G}_m)^{d_j},F_{d_j}) \to (G,F)$ that  must respect the following equality: \begin{equation}
\label{condition}
    |\lambda_i^j|=m_j
\end{equation}

For $i=1,\dots,k$ and $l=0,\dots,s_i$, denote by $p_{i,l}$ the projection $p_{i,l}:L_i \to \Gl_{n_{i,l}} $ and by $\lambda_{i,l}^j$ the morphism $$\lambda_{i,l}^j\coloneqq p_{i,l} \circ \lambda_i^j:((\mathbb{G}_m)^{d_j},F_{d_j}) \to (\Gl_{n_{i,l}},F) .$$ 

Denote by $\gamma_i^{g_i}:Z_M^F \to \C^*$ the morphism given by $\gamma_i^{g_i}(z)=\gamma_i(g_izg_i^{-1})$.
Via the identifications above, the character $\gamma_i^{g_i}$ corresponds  to the character $$\gamma_i \circ \lambda_i:((\mathbb{G}_m)^{d_1})^{F_{d_1}} \times \cdots \times ((\mathbb{G}_m)^{d_r})^{F_{d_r}}=\mathbb{F}^*_{q^{d_1}} \times \dots \times \mathbb{F}^*_{q^{d_r}} \to \C^* $$ given by
 \begin{equation}\label{generic3}
(x_1,\dots ,x_r)\longrightarrow \prod_{j,l}\gamma_{i,l}(N_{\mathbb{F}_{q^{d_j}}/\mathbb{F}_q}(x_j))^{|\lambda_{i,l}^j|} \end{equation}

The equality $1=\displaystyle \prod_{i=1}^k \gamma_i^{g_i}:Z_M^F \to \C^*$ therefore holds if and only if for every $j=1,\dots,r$ \begin{equation}
    \label{generic4}
   \prod_{i,l}\gamma_{i,l}(N_{\mathbb{F}_{q^{d_j}}/\mathbb{F}_q}(x_j))^{|\lambda_{i,l}^j|}=1
\end{equation}

Put $N_{\mathbb{F}_{q^{d_j}}/\mathbb{F}_q}(x_j)=y$. The following equality holds: \begin{equation}
    \label{generic5}
    \prod_{i,l}\gamma_{i,l}(y)^{|\lambda_{i,l}^j|}=\left(\prod_i \gamma_{i,0}(y)\right)^{m_j}\prod_{i,l}(\gamma_{i,l}\gamma_{i,l-1}^{-1}(y))^{m_j-\sum_{s=1}^{l-1}|\lambda_{i,s}|}=\sigma_{\mathcal{X}}^{\delta_j}(y)
    \end{equation}
where $\delta_j $ is the element of $\N^I$ given by  $(\delta_j)_0=m_j$ and $$(\delta_j)_{[i,l]}=m_j-\sum_{s=1}^{l-1}|\lambda_{i,s}|.$$ Therefore, from Equation (\ref{generic5}), we deduce that if $\mathcal{H}_{\sigma_{\mathcal{X}},\alpha}^*=\{\alpha\}$ the $k$-tuple of characters $\mathcal{X}=(R^G_{L_1}(\gamma_1),\dots,R^G_{L_k}(\gamma_k))$ is generic. 

\vspace{8 pt}

Conversely, assume that the $k$-tuple
$\mathcal{X}$ is generic 
and assume the existence of $\delta \in \mathcal{H}_{\sigma_{\mathcal{X}},\alpha}^*$ such that $\delta \neq \alpha$ and $\epsilon\coloneqq\alpha -\delta $ belongs to $\mathcal{H}_{\sigma_{\mathcal{X}},\alpha}^*$ too. Consider the $F$-stable split Levi subgroup $M= \Gl_{\delta_0} \times \Gl_{\epsilon_0}$ embedded block diagonally. Notice that in particular $Z_M \cong \mathbb{G}_m \times \mathbb{G}_m$.

For each $i=1,\dots,k$, there exist  embeddings $\lambda_{i}^1,\lambda_i^2:(\mathbb{G}_m,F) \to (L_i,F)$ such that, with the notations used before, $$|\lambda_{i,l}^1|=\delta_{[i,l]}-\delta_{[i,l+1]}$$ and $$|\lambda_{i,l}^2|=\epsilon_{[i,l]}-\epsilon_{[i,l+1]} .$$ The associated embeddings $\lambda_{i}^1 \times \lambda_i^2 :(Z_M,F) \to (L_i,F)$ correspond to elements $g_1,\dots ,g_k \in G^F$ such that $g_iZ_Mg_i^{-1}\subseteq L_i$ and for $(x_1,x_2) \in Z_M^F=\F_q^* \times \F_q^*$ we have $$\Gamma_M(z)=\prod_{i=1}^k \gamma_i^{g_i}(x_1,x_2)=\sigma_{\Ch}^{\delta}(x_1)\sigma_{\Ch}^{\epsilon}(x_2)=1 .$$

\end{proof}

\vspace{10 pt}
Let $\Lambda$  be the character of $\Gl_n(\F_q)$ induced by the conjugation action of $\Gl_n(\F_q)$ on $\mathfrak{gl}^g_n(\F_q)$. The main result of this paper is the following Theorem. 

\begin{teorema}
\label{mainteo}
Let $V \subseteq (\N^I)^*$ and let $\mathcal{X}=(R^G_{L_{1}}(\gamma_1),\dots,R^G_{L_k}(\gamma_k))$ be a $k$-tuple of Harisha-Chandra  characters of $\Gl_n(\F_q)$ of level $V$. The following equality holds:
\begin{equation}
\label{ssemp2}    
\left<\Lambda \bigotimes_{i=1}^k R^G_{L_{i}}(\gamma_i),1\right>=M_{Q,\alpha,V}(q) \end{equation}

\end{teorema}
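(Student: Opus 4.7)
The plan is to translate the character-theoretic left-hand side into a count of quiver representations of $Q' = Q \cup \{g \text{ loops at } 0\}$ weighted by a suitable linear character, and then invoke Theorem~\ref{multrep} for $Q'$. I would start by expanding
\[
\left\langle\Lambda\otimes\prod_{i=1}^k R^G_{L_i}(\gamma_i),1\right\rangle=\frac{1}{|G^F|}\sum_{g\in G^F}\Lambda(g)\prod_{i=1}^k R^G_{L_i}(\gamma_i)(g),
\]
where $\Lambda(g)=|\{x\in\mathfrak{gl}_n(\F_q)^g:gx_j=x_jg\}|$ and $R^G_{L_i}(\gamma_i)(g)$ is, by Formula~(\ref{delignelusztig}), a sum over $g$-stable flags $F_i\in(G/P_i)^F$ of $\gamma_i$ evaluated on $g$ restricted to the graded pieces of $F_i$. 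Fubini then lets me rewrite the whole expression as a sum over triples $(x,F,g)$ where $x=(x_1,\dots,x_g)$, $F=(F_1,\dots,F_k)$, and $g\in G^F$ simultaneously commutes with every $x_j$ and stabilises every $F_i$.

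Next I would identify the pair $(x,F)$ with an element of $R(Q',\alpha)^*(\F_q)$: after fixing the $V_{[i,j]}=\F_q^{\alpha_{[i,j]}}$, the flags $F_i$ become the image data of the injective leg maps and $x$ encodes the $g$ loops. Since $\alpha_0=n\neq 0$, by Remark~\ref{indinj} every indecomposable component of any $M\in R(Q',\alpha)^*(\F_q)$ (including after $\otimes_{\F_q}\overline{\F_q}$) has dimension vector in $(\N^I)^*$. For a representation $M\leftrightarrow(x,F)$, the projection $\Stab_{\Gl_\alpha(\F_q)}(M)\to \Stab_{G^F}(x,F)$, $(g_0,(g_{[i,j]}))\mapsto g_0$, is an isomorphism (the $g_{[i,j]}$ are forced to be the restrictions of $g_0$ to $V_{[i,j]}$). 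Under this isomorphism, I would check that
\[
\prod_i \gamma_i\bigl(g_0|_{\mathrm{gr}(F_i)}\bigr) \;=\; \rho\bigl((g_0,(g_{[i,j]}))\bigr)
\]
where $\rho$ is the linear character on $\Gl_\alpha(\F_q)$ built from $\sigma_{\mathcal{X}}$ in \cref{countingV}. This is a telescoping computation: substituting $(\sigma_{\mathcal{X}})_0=\prod_i\gamma_{i,0}$ and $(\sigma_{\mathcal{X}})_{[i,j]}=\gamma_{i,j}\gamma_{i,j-1}^{-1}$ and using $\det g_0|_{V_{[i,j-1]}/V_{[i,j]}}=\det g_{[i,j-1]}/\det g_{[i,j]}$, almost all factors cancel in pairs and one is left exactly with $\prod_i\gamma_i(g_0|_{\mathrm{gr}(F_i)})$.

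With the character matched, the LHS becomes
\[
\frac{1}{|\Gl_\alpha(\F_q)|}\sum_{M\in R(Q',\alpha)^*(\F_q)}\sum_{h\in\Stab(M)}\rho(h)=\langle R_\alpha^{Q'}\otimes\rho,1\rangle,
\]
i.e.\ exactly the left-hand side of Formula~(\ref{multrep1}) for the quiver $Q'$, provided I check the hypothesis $V_{\leq\alpha}=\mathcal{H}_{\sigma_{\mathcal{X}},\alpha}$. What is given is instead $V_{\leq\alpha}=\mathcal{H}^*_{\sigma_{\mathcal{X}},\alpha}=\mathcal{H}_{\sigma_{\mathcal{X}},\alpha}\cap(\N^I)^*$; but since we are counting only representations inside $R(Q',\alpha)^*(\F_q)$, whose indecomposable components have dimensions in $(\N^I)^*$, the proof of Theorem~\ref{multrep} goes through verbatim with $\mathcal{H}^*_{\sigma_{\mathcal{X}},\alpha}$ replacing $\mathcal{H}_{\sigma_{\mathcal{X}},\alpha}$. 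Applying that theorem then yields $\langle R_\alpha^{Q'}\otimes\rho,1\rangle=M_{Q',\alpha,V}(q)$, which is the desired equality~(\ref{ssemp2}).

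The main obstacle is the character-matching telescoping of the previous paragraph, which is where all the bookkeeping lives: one must be attentive to the conventions relating the Levi factor $\Gl_{n_{i,j}}$ to the graded piece of the flag it acts on, to the compatibility between $\Stab_{G^F}(x,F)$ and $\Stab_{\Gl_\alpha(\F_q)}(M)$ under restriction, and to the sign of the index shift in $(\sigma_{\mathcal{X}})_{[i,j]}=\gamma_{i,j}\gamma_{i,j-1}^{-1}$. By contrast, once the characters on the stabiliser are identified, the passage from a sum over $G^F$ to a sum over $\Gl_\alpha(\F_q)$ and the final appeal to Theorem~\ref{multrep} are formal.
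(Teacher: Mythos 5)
Your proposal is correct and follows essentially the same route as the paper: its Lemma \ref{ssemp4} establishes exactly your identification $\left<\Lambda\bigotimes_i R^G_{L_i}(\gamma_i),1\right>=\left<R^*_{\alpha}\otimes\rho_{\mathcal{X}},1\right>$ via the flag/injective-leg-representation correspondence (organized there as a pointwise identity in $g$, leg by leg, with the free action of the leg groups and the same telescoping determinant computation), and the conclusion is then the same $\mathcal{H}^*_{\sigma_{\mathcal{X}},\alpha}$-variant of Theorem \ref{multrep} that you invoke.
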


\vspace{10 pt}

\begin{oss}
  Notice that for any $\beta_1,\beta_2 \in \mathcal{H}^*_{\sigma_{\Ch},\alpha}$ ad any $m_1,m_2 \in \N$ such that $m_1\beta_1+m_2\beta_2 \leq \alpha$, we have $m_1\beta_1+m_2\beta_2 \in \mathcal{H}^*_{\sigma_{\Ch},\alpha}$. In particular, consider a generic $k$-tuple $\Ch=(R^G_{L_{1}}(\delta_1),\dots,R^G_{L_k}(\delta_k))$.  By Lemma \ref{generic2}, we deduce that for any $\beta_1,\dots,\beta_r \in \mathcal{H}^*_{\sigma_{\Ch},\alpha}\setminus\{\alpha\}$ and $m_1,\dots,m_r \in \N$, we have that $$m_1 \beta_1 +\cdots +m_r \beta_r \neq \alpha .$$

 In particular, from the definition of $\Plexp$ and Lemma \ref{formulaV}, we see that $M_{Q,\alpha,\mathcal{H}^*_{\sigma_{\Ch},\alpha}}(t)=a_{Q,\alpha}(t)$ and therefore Formula (\ref{ssemp2}) implies the following   Identity: \begin{equation}
    \label{ex10}    \left<\Lambda\bigotimes_{i=1}^k R^G_{L_{i}}(\delta_i),1\right>=a_{Q,\alpha}(q)\end{equation} which had already been proved in \cite[Theorem 3.4.1]{AH}.
    \end{oss}    
 
\vspace{10 pt}    
 
\begin{esempio} 
Consider the case where $V=(\N^I)^*$. As remarked in Example \ref{esempio3}, the $k$-tuple $(R^G_{L_i}(1))_{i=1}^k$ is of level $(\N^I)^*$. By eq.(\ref{ssemp2}) and eq.(\ref{injective}), we  obtain the identity \begin{equation}
\label{esempioflags}
    \langle R^G_{L_1}(1) \otimes \cdots \otimes R^G_{L_k}(1), 1 \rangle=M^{\ast}_{Q,\alpha}(q)
\end{equation}

The latter identity had already been proven in \cite[Proposition 3.2.5]{AH}. Roughly speaking, in this case  Identity (\ref{esempioflags}) comes from the fact that $\displaystyle \prod_{i=1}^kR^G_{L_i}(1)(g)$, for each $g \in \Gl_n(\F_q)$, is the number of $k$-tuple of flags of $\F_q^n$ of type $\alpha$ fixed by $g$ and the Burnside formula (see \cite[Lemma 2.1.1]{AH}).
\end{esempio}

\vspace{14 pt}

Fix now a $k$-tuple $\mathcal{X}=(R^G_{L_1}(\gamma_1),\dots,R^G_{L_k}(\gamma_k))$ and  consider the character $\rho_{\mathcal{X}}=\sigma_{\mathcal{X}}(\det_I)$ as in \cref{countingV}. We denote by $R_{\alpha}^*$ the complex character of $\Gl_{\alpha}(\F_q)$ given by its action on the finite set $R(Q,\alpha)^*(\mathbb{F}_q)$. We start by proving the following preliminary lemma:
\begin{lemma}
\label{ssemp4}
The following identity holds:
\begin{equation}
\label{ssemp5}
    \left<R_{\alpha}^* \otimes \rho_{\mathcal{X}},1\right>=\left<\Lambda\bigotimes_{i=1}^kR^G_{L_{i}}(\gamma_i),1\right> 
\end{equation}
\end{lemma}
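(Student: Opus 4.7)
The plan is to expand both sides of \eqref{ssemp5} directly (via Burnside on the left and the formula \eqref{delignelusztig} on the right) and then to match the resulting sums term-by-term over $g_0$-stable $k$-tuples of flags in $\F_q^n$.

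First I would expand the LHS. As in the proof of Theorem~\ref{multrep},
\[
\langle R_\alpha^* \otimes \rho_\mathcal{X},\,1\rangle \;=\; \frac{1}{|\Gl_\alpha(\F_q)|}\sum_{\substack{g \in \Gl_\alpha(\F_q),\; x \in R(Q,\alpha)^*(\F_q) \\ g\cdot x = x}}\rho_\mathcal{X}(g).
\]
A point $x \in R(Q,\alpha)^*$ consists of a tuple $(x_1,\dots,x_g) \in \mathfrak{gl}_n(\F_q)^g$ at the $g$ loops together with injective maps $f_{i,j}$ along the legs; setting $W^{(i)}_{[i,j]} := \Imm(f_{i,1}\circ\cdots\circ f_{i,j}) \subseteq \F_q^n$ recovers a flag on the $i$-th leg of type determined by $\alpha_i$. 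Writing $g=(g_0,(g_{[i,j]}))$, the condition $g\cdot x = x$ translates to: $g_0 x_l = x_l g_0$ for all $l$, $g_0$ stabilizes each $W^{(i)}$, and each $g_{[i,j]}$ is determined by $g_0$, the flag, and the choice of $f_{i,j}$; moreover $\det g_{[i,j]} = \det(g_0|_{W^{(i)}_{[i,j]}})$ independently of that choice. For each fixed triple $(g_0,(x_l),(W^{(i)}))$, the $f_{i,j}$'s range over a $\prod_{i,j\geq 1}\Gl_{\alpha_{[i,j]}}(\F_q)$-torsor. Using $|\Gl_\alpha(\F_q)|=|\Gl_n(\F_q)|\prod_{i,j\geq 1}|\Gl_{\alpha_{[i,j]}}(\F_q)|$ and summing over $(x_l)$ (which contributes $|C_{\mathfrak{gl}_n(\F_q)}(g_0)|^g = \Lambda(g_0)$), the LHS collapses to
\[
\frac{1}{|\Gl_n(\F_q)|}\sum_{g_0 \in \Gl_n(\F_q)}\Lambda(g_0)\!\!\sum_{\substack{(W^{(i)}) \\ g_0\text{-stable}}}\!\!(\sigma_\mathcal{X})_0(\det g_0)\prod_{i,j\geq 1}(\sigma_\mathcal{X})_{[i,j]}\!\left(\det(g_0|_{W^{(i)}_{[i,j]}})\right).
\]

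For the RHS, applying \eqref{delignelusztig} rewrites each $R^G_{L_i}(\gamma_i)(g_0)$ as a sum, over $g_0$-stable flags $W^{(i)}$ on the $i$-th leg, of the weight $\gamma_i(\pi_{L_i}(h_i^{-1}g_0 h_i))$. The proof then reduces to verifying the telescoping identity
\[
\prod_{i=1}^k \gamma_i\bigl(\pi_{L_i}(h_i^{-1}g_0 h_i)\bigr) \;=\; (\sigma_\mathcal{X})_0(\det g_0)\prod_{i,j\geq 1}(\sigma_\mathcal{X})_{[i,j]}\!\left(\det(g_0|_{W^{(i)}_{[i,j]}})\right).
\]
This follows by expanding $\gamma_i(\pi_{L_i}(\cdot))=\prod_l \gamma_{i,l}(\det(g_0|_{W^{(i)}_{[i,l-1]}/W^{(i)}_{[i,l]}}))$, writing each graded-piece determinant as $\det(g_0|_{W^{(i)}_{[i,l-1]}})/\det(g_0|_{W^{(i)}_{[i,l]}})$, and regrouping: the internal subspaces produce factors $\gamma_{i,l}\gamma_{i,l-1}^{-1}=(\sigma_\mathcal{X})_{[i,l]}$ evaluated on $\det(g_0|_{W^{(i)}_{[i,l]}})$, while the boundary terms (using $\det(g_0|_{W^{(i)}_{[i,0]}})=\det g_0$) collect into $\prod_i \gamma_{i,0}(\det g_0) = (\sigma_\mathcal{X})_0(\det g_0)$.

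The main obstacle I expect is the bookkeeping in the telescoping step: one must fix conventions so that the Levi index $\gamma_{i,l}$ is paired with the correct graded piece of the flag $W^{(i)}$ (the pairing implicit in the definition $\alpha_{[i,j]}=n-\sum_{h=1}^{j} n_{i,h}$), and then track shifts carefully so that the definitions $(\sigma_\mathcal{X})_{[i,j]}=\gamma_{i,j}\gamma_{i,j-1}^{-1}$ and $(\sigma_\mathcal{X})_0=\prod_i\gamma_{i,0}$ combine exactly into the desired product. Once this combinatorial identity is established, substitution into the RHS yields the LHS display above, proving \eqref{ssemp5}.
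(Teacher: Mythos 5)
Your proposal is correct and takes essentially the same route as the paper: the paper establishes the stronger pointwise identity (\ref{char}) (reducing to a single leg), identifies points of $R(Q,\alpha)^*(\F_q)$ with $g$-stable flags of images via the map $\psi$ whose fibers are free $\prod_j \Gl_{\beta_j}(\F_q)$-orbits --- your torsor step, with the loop data contributing $\Lambda(g_0)$ --- and concludes with exactly the telescoping computation $\rho_{\gamma}(\psi^{-1}(h))=\gamma(h^{-1}gh)$. The only point to fix is the pairing you yourself flagged: in the conventions actually used in \cref{splitdef} and in the paper's proof, $\gamma_{i,l}$ is attached to the graded piece $V_l/V_{l+1}$ and $\gamma_{i,s_i}$ to the deepest subspace (so the boundary factor $\prod_i\gamma_{i,0}(\det g_0)$ comes from the top quotient), whereas with your literal pairing $\gamma_{i,l}\leftrightarrow W_{[i,l-1]}/W_{[i,l]}$ the telescoped weights would not collect into $(\sigma_{\mathcal{X}})_0$ and $(\sigma_{\mathcal{X}})_{[i,l]}$ as defined in (\ref{definitionsigma}).
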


\begin{proof}
We will show the following stronger identity which implies Identity (\ref{ssemp5}). For any $g \in \Gl_n(\F_q)$ we have the following equality: \begin{equation} \label{char}\sum_{\substack{i=1,\dots,k \\j=1,\dots s_i \\ g_{i,j}\in \Gl_{\alpha_{i,j}}(\F_q)}}(R^*_{\alpha} \otimes\rho_{\mathcal{X}})(g,g_{1,1},\dots, g_{k,1},\dots )\dfrac{1}{\displaystyle \prod_{\substack{i=1,\dots,k \\j=1,\dots s_i}}|\Gl_{\alpha_{i,j}}(\F_q)|}=\Lambda(g)\prod_{i=1}^k R^G_{L_{i}}(\gamma_i)(g) .\end{equation}

 To show Equation (\ref{char}), it is enough to prove it in the case of a single leg (i.e $k=1$). We fix a (split) Levi subgroup $L \subseteq G$ with $L \cong \Gl_{n_0} \times \cdots \times \Gl_{n_l} $. The group $L$  determines a quiver $Q'=(I',\Omega')$ with $I'=\{0,1\dots,l\}$ and $g$ loops on the vertex $0$ and the dimension vector $$\beta=(n,\beta_1,\dots,\beta_l)$$ with $\displaystyle \beta_j=\sum_{h=j}^l n_j$.
 Fix a character $\gamma=(\gamma_0,\dots,\gamma_l):L^F \to \C^*$. Denote by $\sigma_{\gamma} \in \Hom(\F_q^*,\C^*)^{I'}$ the associated element and by $\rho_{\gamma}=\sigma_{\gamma}(\det_{I'})$.  Let us show that the following equality holds: \begin{equation}
    \label{char1}
    \Lambda(g) R^G_{L}(\gamma)(g)=\sum_{\substack{j=1,\dots,l \\ g_j \in \Gl_{\beta_j}(\F_q)} }R^*_{\beta}(g,g_1,\dots ,g_l)\dfrac{\rho_{\gamma}(g,g_1,\dots ,g_{l})}{|\Gl_{\beta_1}(\F_q)|\cdots |\Gl_{\beta_{l}}(\F_q)|}
\end{equation}

Let $P$ be the unique parabolic subgroup of $\Gl_n$ containing the upper triangular matrices and $L$. Recall that by the Formula (\ref{delignelusztig}) it holds: $$R^G_{L}(\gamma)(g)=\sum_{\substack{h \in G^F/P^F \\ g \cdot h=h}}\gamma (h^{-1}gh) .$$

 The character $R^*_{\beta}$ satisfies: $$R^*_{\beta}(g,g_1,\dots ,g_l)=\#\Bigg\{f \in \mathfrak{gl}^g_n(\F_q), f_l \in \Hom^{inj}(\mathbb{F}_q^{\beta_l},\mathbb{F}_q^{\beta_{l-1}}), \dots ,$$ $$f_1 \in \Hom^{inj}(\mathbb{F}_{q}^{\beta_1},\mathbb{F}_q^n) \ \  \text{s.t} \ \ g \cdot f=f, \ gf_1g_1^{-1}=f_1,\dots, \ g_{l-1}f_lg_{l}^{-1}=f_l \Bigg\} .$$

where for $V,W$ vector spaces, we denote by  $\Hom^{inj}(V,W)$  the set of  injective linear homomorphisms. We denote by $X(g)$ the set defined by: $$X(g)\coloneqq \Bigg\{g_1\in  \Gl_{\beta_1}(\F_q) , \dots , g_l \in \Gl_{\beta_l}(\F_q), \  f_l \in \Hom^{inj}(\mathbb{F}_q^{\beta_l},\mathbb{F}_q^{\beta_{l-1}}), \dots ,$$ $$f_1 \in \Hom^{inj}(\mathbb{F}_{q}^{\beta_1},\mathbb{F}_q^n) \ \  \text{s.t}  \ \, gf_1g_1^{-1}=f_1, \dots, \ g_{l-1}f_lg_{l}^{-1}=f_l \Bigg\} $$ and for $x=(g_1,\dots,g_l,f_1,\dots,f_l) \in X(g)$ we write $\rho_{\gamma}(x)\coloneqq\rho_{\gamma}(g,g_1,\dots ,g_l)$. For a fixed $g \in \Gl_n(\F_q)$, the following equality holds then : $$\sum_{\substack{{j=1},\\ g_j \in \Gl_{\beta_j}(\F_q) }}^l \dfrac{R^*_{\beta}(g,g_1,\dots,g_l)\rho_{\gamma}(g,g_1,\dots,g_l)}{|\Gl_{\beta_1}(\F_q)| \cdots |\Gl_{\beta_l}(\F_q)|}=\Lambda(g)\sum_{x \in X(g)}\dfrac{\rho_{\gamma}(x)}{|\Gl_{\beta_1}(\F_q)| \cdots |\Gl_{\beta_l}(\F_q)|} .$$

There is a map $\psi: X(g) \rightarrow (G^F/P^F)^g$ given by $$\psi((g_1,\dots,g_l,f_1,\dots,f_l))= \Imm(f_1\cdots f_l) \subseteq \Imm(f_1\cdots f_{l-1})\subseteq \dots \subseteq \Imm(f_1) \subseteq \F_q^n .$$ To see that $\psi$ is well defined we need to start to check that the subspaces $\Imm(f_1\cdots f_l), \Imm(f_1\cdots f_{l-1}),$ $\dots ,\Imm(f_1) $ are all $g$-stable. Start with $\Imm(f_1)$. We have $gf_1=f_1g_1$ and so $g(\Imm(f_1)) \subseteq \Imm(f_1)$. For a general $j \geq 1$ we see similarly $$ gf_1 \cdots f_j=f_1g_1f_2\cdots f_j=\cdots \cdots =f_1\cdots f_jg_j .$$
Let us show that the map $\psi$ is surjective. Given a $g$-stable flag $$V_l \subseteq V_{l-1} \subseteq \cdots  \subseteq V_1 \subseteq  \mathbb{F}_q^n=hP^F ,$$ we can choose for each $j=1,\dots,l$ a  basis $\mathfrak{B}_j$ of $V_j$ such that $\mathfrak{B}_j \subseteq \mathfrak{B}_{j-1}$ as ordered sets. The choices of the  $\mathfrak{B}_j$'s define morphisms $f_j:\mathbb{F}_q^{\beta_j} \hookrightarrow \mathbb{F}_q^{\beta_{j-1}}$ such that $\Imm(f_1f_2\cdots f_j) \subseteq \mathbb{F}_{q}^n$ is $g$-stable for any $j=1,\dots,l$.

For each $j=1,\dots,l$, the automorphism $g|_{\Imm(f_1f_2\cdots f_j)}$ written in the basis $\mathfrak{B}_j$ define an element $g_j \in \Gl_{\beta_j}(\F_q)$ and the element $x_h \in X(g)$ defined by  $x_h\coloneqq (g_1,\dots ,g_l,f_1, \dots ,f_l)$ is such that $\psi(x)=V_l \subseteq V_{l-1} \subseteq \cdots \subseteq V_1 \subseteq \mathbb{F}_q^n$.

There is a free action of $\displaystyle \prod_{j=1}^l \Gl_{\beta_j}(\F_q)$ on $X(g)$ defined as $$(m_1,\dots,m_h) \cdot (g_1,\dots,g_h,f_1,\dots,f_h)\coloneqq (m_1g_1m_1^{-1},\dots, m_lg_lm_l^{-1},f_1m_1^{-1},m_1f_2m_2^{-1},\dots ,f_lm_l^{-1}) .$$ The map $\psi$ is $\displaystyle \prod_{j=1}^l \Gl_{\beta_j}(\F_q)$ invariant and, for each $h \in G^F/P^F$, the fiber $\psi^{-1}(h)$ is equal to $\displaystyle \left(\prod_{j=1}^l \Gl_{\beta_j}(q)\right)\cdot x_h$. Therefore, as  $\rho_{\gamma}((m_1,\dots,m_l) \cdot x)=\rho_{\gamma}(x)$ for any $\displaystyle (m_1,\dots,m_l) \in \prod_{j=1}^l \Gl_{\beta_j}(\F_q)$, we have
$$\sum_{x \in X(g)}\dfrac{\rho_{\gamma}(x)}{|\Gl_{\beta_1}(\F_q)| \cdots |\Gl_{\beta_l}(\F_q)|}=\sum_{h \in (G^F/P^F)^g }\rho_{\gamma}(\psi^{-1}(h)) .$$

We are thus left to show that $\rho_{\gamma}(\psi^{-1}(h))=\gamma(h^{-1}gh)$. On the one side, by evaluating $\rho_{\gamma}$ at the element $x_h \in \psi^{-1}(h)$ defined above, we see that: $$\rho_{\gamma}(\psi^{-1}(h))= ((\gamma_{l-1}^{-1}\gamma_l) (\det(g|_{V_{l}})))\cdots (\gamma_0(\det (g))) .$$

On the other side, the matrix $h^{-1}gh$ is a block upper triangular matrix:

$$h^{-1}gh=\begin{pmatrix}
    g'_l & & & & & &\\
    0 &g'_{l-1} & \\
    0 &0 &g'_{l-2}\\
    0 &0 &0 &\ast\\
    0 &0 &0 &0 &\cdots \\
    0 &0 &0 &0 &0 &g'_0
    \end{pmatrix} $$  
where $g'_l $ is $g|_{V_l}$ written in the basis $\mathfrak{B}_l$,  the matrix $\begin{pmatrix} g'_l &\ast\\
0 &g'_{l-1}
\end{pmatrix} $
is equal to $g|_{V_{l-1}}$ written in the basis $\mathfrak{B}_{l-1}$ and so on. Thus, the following identity holds : $$\gamma(h^{-1}gh)=\prod_{j=0}^l\gamma_j(\det(g'_j))=$$ $$=\gamma_l(\det(g|_{V_l}))(\gamma_{l-1}(\det(g|_{V_{l-1}}))\gamma_{l-1}^{-1}(\det(g|_{V_{l}})))(\gamma_{l-2}(\det(g|_{V_{l-2}}))\gamma_{l-2}^{-1}(\det(g|_{V_{l-1}}))))$$ $$ \cdots (\gamma_0(\det(g))\gamma_0^{-1}(\det(g|_{V_1})))=$$ $$= (\gamma_{l-1}^{-1}\gamma_l (\det(g|_{V_{l}})))\cdots (\gamma_0(\det (g))) .$$

\end{proof}

\vspace{10 pt}

To end the proof of Theorem \ref{mainteo}, it is then enough to notice that the proof of Theorem \ref{multrep} can be slightly modified to show that for $V \subseteq (\N^I)^*$ such that $V_{\leq \alpha}=\mathcal{H}^*_{\sigma_{\mathcal{X}},\alpha}$ we have \begin{equation}
    \label{multrep2}
    \langle R^*_{\alpha} \otimes \rho_{\mathcal{X}} ,1\rangle=M_{Q,\alpha,V}(q)
\end{equation}.

\begin{oss}
During the proof  of the Lemma \ref{ssemp2}  we showed Identity (\ref{char1}), i.e for any $F$-stable Levi subgroup $L \subseteq \Gl_n$ and any character $\gamma:L^F \to \C^*$, it holds:

\begin{equation}
    R^G_L(\gamma)(g)=\sum_{\substack{j=1, \\ g_j \in \Gl_{\beta_j}(\F_q)} }^l R^*_{\beta}(g,g_1,\dots,g_l)\dfrac{\rho_{\gamma}(g,g_1,\dots ,g_{l})}{|\Gl_{\beta_1}(\F_q)|\cdots |\Gl_{\beta_{l}}(\F_q)|}.
\end{equation} A similar formula seems to not have been known before in the literature. It would be interesting to find a way to relate quiver representations to  Deligne-Lusztig characters associated to non-split Levi subgroups too.
\end{oss}.

\subsubsection{Non-vanishing of multiplicities}

From  Proposition \ref{nonnega1}, we deduce the following proposition.
\begin{prop}
\label{nonvan}
\noindent (1) For a $k$-tuple of Harisha-Chandra characters $\mathcal{X}=(R^G_{L_1}(\gamma_1),\dots,R^G_{L_k}(\gamma_k))$, the multiplicity $\langle \Lambda \otimes  R^G_{L_1}(\gamma_1) \otimes \cdots \otimes R^G_{L_k}(\gamma_k),1 \rangle$ is the evaluation at $q$ of a polynomial with non-negative coefficients.

 \noindent(2) Given $Q,\alpha$ as above the multiplicity $\left<\Lambda \otimes R^G_{L_1}(\gamma_1) \otimes \cdots \otimes R^G_{L_k}(\gamma_k) ,1\right>$ is non-zero if and only there exist
\begin{itemize}
    \item $\beta_1,\dots, \beta_r \in (\Phi^+(Q) \cap V)$
    \item $m_1,\dots ,m_r \in \N$
\end{itemize}
such that $m_1\beta_1+\cdots +m_r\beta_r=\alpha$
\end{prop}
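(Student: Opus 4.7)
The plan is to derive both claims as essentially immediate consequences of two results already established in the excerpt: Theorem \ref{mainteo} (which identifies the multiplicity with the counting polynomial $M_{Q,\alpha,V}(q)$) and Proposition \ref{nonnega1} (which records both the non-negativity of the coefficients of $M_{Q,\alpha,V}(t)$ and the combinatorial criterion for its non-vanishing). The only thing the proof has to do is to glue these together correctly, taking care that the subset $V$ is the one implicit in the definition of ``level $V$'' for the given $k$-tuple $\mathcal{X}=(R^G_{L_1}(\gamma_1),\dots,R^G_{L_k}(\gamma_k))$.

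First, I would observe that by the remark following Definition \ref{defgeneric}, any $k$-tuple $\mathcal{X}$ is automatically of level $V:=\mathcal{H}^*_{\sigma_{\mathcal{X}},\alpha}\subseteq (\N^I)^*$, so the hypothesis of Theorem \ref{mainteo} applies in the case $g=0$. Invoking that theorem yields the identity
\begin{equation*}
\left\langle \Lambda\otimes R^G_{L_1}(\gamma_1)\otimes\cdots\otimes R^G_{L_k}(\gamma_k),1\right\rangle = M_{Q,\alpha,V}(q),
\end{equation*}
so the multiplicity is the evaluation of a polynomial in $\Z[t]$ at $t=q$. (If the statement is meant to include the version with $g$ loops, exactly the same argument applies since Theorem \ref{mainteo} is stated with $\Lambda$.)

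For part $(1)$, I would then simply appeal to Proposition \ref{nonnega1}$(1)$: the polynomial $M_{Q,\alpha,V}(t)$ has non-negative integer coefficients. For part $(2)$, I would use Proposition \ref{nonnega1}$(2)$: $M_{Q,\alpha,V}(t)\neq 0$ if and only if there exist $\beta_1,\dots,\beta_r\in\Phi^+(Q)\cap V$ and $m_1,\dots,m_r\in\N$ with $m_1\beta_1+\cdots+m_r\beta_r=\alpha$. Since a polynomial in $\Z[t]$ with non-negative integer coefficients vanishes at $t=q$ (for any prime power $q$ at which the counting interpretation is valid) if and only if the polynomial is identically zero, the equivalence transfers at once from $M_{Q,\alpha,V}(t)\neq 0$ to the non-vanishing of the multiplicity, giving the claimed criterion.

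Since the two ingredients are already in place, there is essentially no obstacle; the only point to be careful about is confirming that the appropriate $V$ for the given $k$-tuple is the one recorded by $\mathcal{H}^*_{\sigma_{\mathcal{X}},\alpha}$, so that Theorem \ref{mainteo} is genuinely applicable without any further genericity assumption. Once this is noted, parts $(1)$ and $(2)$ are direct corollaries of Proposition \ref{nonnega1}, and the proof is complete.
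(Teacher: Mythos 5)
Your proposal is correct and follows exactly the paper's route: the paper deduces Proposition \ref{nonvan} by combining the identity of Theorem \ref{mainteo} with the positivity and non-vanishing criterion of Proposition \ref{nonnega1}, just as you do. Your extra remark that a nonzero polynomial with non-negative integer coefficients cannot vanish at $t=q$ is a welcome (if minor) clarification of a step the paper leaves implicit.
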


Notice that this implies that if $\Phi^+(Q) \cap V=\emptyset$ the multiplicity is $0$. Similarly, if $ \alpha \not\in V$ we have that the multiplicity is $0$. Indeed, as $V_{\leq \alpha}=\mathcal{H}^*_{\sigma_{\mathcal{X}},\alpha}$, if $\beta_i \in V $ with $\beta_i\leq \alpha $ and $m_i \beta_i \leq \alpha$ too, we have $m_i \beta_i \in V$ and $\alpha=m_1\beta_1 +\cdots +m_r \beta_r \in V$. 
\vspace{10 pt}

\begin{oss}
Identity (\ref{ssemp2}) implies that the multiplicity $\displaystyle \left<\Lambda \bigotimes_{i=1}^k R^G_{L_{i}}(\gamma_i),1\right>$ does not depend on the characters $\gamma_1,\dots, \gamma_k$ but only on the Levi subgroups $L_1,\dots,L_k$ and the subset $\mathcal{H}^*_{\sigma_{\mathcal{X}},\alpha}$.

As $a_{Q,\beta}(t) \neq 0$ if and only if $\beta \in \Phi^+(Q)$, we deduce more precisely that the multiplicity depends only on the intersection of $\mathcal{H}^*_{\sigma_{\mathcal{X}},\alpha}$ and $ \Phi^+(Q)$.
\end{oss}

\subsection{Computations}

\subsubsection{Irreducibility for semisimple split characters}
\label{irred}
Consider the case of $g=0$ and $k=2$. Consider a split $F$-stable Levi subgroup $L \subseteq \Gl_n$ and $\gamma:L^F \to \C^*$. Let $\mathcal{X}$ be the couple of split Levi characters $\mathcal{X}=(R^G_L(\gamma),R^G_L(\gamma^{-1}))$.

As $R^G_L(\gamma^{-1})$ is the dual of $R^G_L(\gamma)$, we have $\langle R^G_L(\gamma)\otimes R^G_L(\gamma^{-1}),1 \rangle=\langle R^G_L(\gamma),R^G_L(\gamma) \rangle$. Using Theorem \ref{mainteo} we give an alternative proof of the classical result that \begin{equation}
\label{irreducibility}    
\langle R^G_L(\gamma),R^G_L(\gamma) \rangle=1
\end{equation}  if and only if $\gamma_i \neq \gamma_j$ for all $i\neq j$. Notice that the Identity (\ref{irreducibility}) holds if and only if the character $R^G_L(\gamma)$ is irreducible.

Let $L=\Gl_{n_0} \times \cdots \times \Gl_{n_l}$ and $g=0$. The associated  quiver $Q$ is thus  the following type $A$ quiver. 
\begin{center}
    \begin{tikzcd}
    \circ^{[1,l]} \arrow[r,""] &\circ^{[1,l-1]} \arrow[r,""]  &\dots \arrow[r,""] &\circ^{[1,1]} \arrow[r,""]
    &\circ^0
    &\circ^{[2,1]} \arrow[l,""] &\circ^{[2,2]} \arrow[l,""]  &\dots \arrow[l,""] &\circ^{[2,l]} \arrow[l,""] 
    \end{tikzcd}
\end{center}

The associated dimension vector $\alpha$ is $(n_l,n_l+n_{l-1},\dots,n-n_0,n,n-n_0,\dots,n_{l-1},n_l)$ and the element $\sigma_{\mathcal{X}}$ is equal to $$\sigma_{\mathcal{X}}=(\gamma_{l}\gamma_{l-1}^{-1},\gamma_{l-1}\gamma_{l-2}^{-1},\dots,\gamma_{1}\gamma_0^{-1},1,\gamma_{0}\gamma_1^{-1},\dots,\gamma_{l-1}^{-1}\gamma_{l-2},\gamma_{l}^{-1}\gamma_{l-1}).$$ As $Q$ is a Dynkin quiver of type $A$, the subset $\Phi^+(Q)$ has an explicit description and each root $\beta \in \Phi^+(Q)$ is real, i.e $a_{Q,\beta}(t)=1$ (see Proposition \ref{comb}). For $j=0,\dots, l$ and $h=0,\dots,l$ define the dimension vector  $\beta_{j,h}$ as $$(\beta_{j,h})_i\coloneqq \begin{cases}0 \text{ if }  i=[1,a]  \text{ with } a>j \text{ or } i=[2,b] \text{ with } b>h\\
1 \text{ otherwise}
\end{cases}.
$$
The set $\Phi^+(Q) \cap (\N^I)^*$ is given by $\{\beta_{j,h}\}_{j,h=0,\dots,l}$. Let us denote by $M_{j,h}$ the absolutely indecomposable representation of dimension vector $\beta_{j,h}$ over $\mathbb{F}_q$. Notice that $\sigma_{\mathcal{X}}^{\beta_{j,h}}=\gamma_{j}\gamma_h^{-1}$ and so we see that $\sigma_{\mathcal{X}}^{\beta_{i,i}}=1$ for every $i=0,\dots ,l$. The representation $$\displaystyle M=\bigoplus_{j=0}^l M_{j,j}^{\oplus n_j}$$ is thus of level at most $\mathcal{H}^*_{\sigma_{\mathcal{X}},\alpha}$ and the dimension vector of $M$ is equal to $\alpha$. If $\gamma_j=\gamma_h$ for $j \neq h$, we have $\beta_{j,h},\beta_{h,j} \in \mathcal{H}^*_{\sigma_{\mathcal{X}},\alpha}$.   The representation $$N=\left(\bigoplus_{m \neq 0,j,h}M_{m,m}^{\oplus n_m}\right) \oplus M_{j,j}^{n_j-1} \oplus M_{h,h}^{n_h-1} \oplus M_{0,0}^{n_0-1} \oplus M_{j,h} \oplus M_{h,j} $$ is therefore of level at most $\mathcal{H}^*_{\sigma_{\mathcal{X}},\alpha}$ and of dimension vector $\dim N=\alpha$. We deduce that $M_{Q,\alpha,\mathcal{H}_{\sigma_{\mathcal{X}}}}(t)=1$ if and only if $\gamma_j \neq \gamma_h$ for every $j \neq h$.

\subsubsection{Explicit computation for $n=2$}
\label{examplesplit}

Let us look at the case where $\mathcal{X}$ is a $k$-tuple of semisimple split characters of $\Gl_2(\F_q)$ with $\mathcal{X}=(R^G_T(\gamma_1),\dots,R^G_T(\gamma_k))$ where $T \subseteq \Gl_2$ is the (split) maximal torus of diagonal matrices. Each character $\gamma_i$ is thus of the form $$\gamma_i=(\delta_i,\beta_i):\mathbb{F}_q^* \times \mathbb{F}_q^* \to \C^* $$ with $\delta_i,\beta_i :\F_q^* \to \C^*$.

We fix $g=0$ in the following. Using the notations of the paragraph \cref{charss}, the associated quiver $Q=(I,\Omega)$ has thus a central vertex $0$ and $k$ other vertices $[1,1], \dots ,[k,1]$. We will denote the vertex $[i,1]$ simply by $i$ for each $i=1,\dots ,k$.

The associated dimension vector $\alpha$ is given by $\alpha_0=2$ and $\alpha_i=1$ for each $i=1,\dots ,k$. The quiver $Q$ and the dimension vector $\alpha$ for $k=4$ are depicted below.

\begin{center}
\begin{tikzcd}
 &       &1 \arrow[d,""]  &     \\
&1 \arrow[r," "] &2 &1 \arrow[l," "]\\
         &  &1 \arrow[u,""] 
\end{tikzcd}

\end{center}

 For the sake of simplicity, we assume that $\delta_i^{-1} =\beta_i$ and  $\beta_i^{2} \neq 1$, for each $i=1,\dots,k$. The element $\sigma_{\mathcal{X}}$ associated to $\mathcal{X}=(R^G_T(\gamma_i))_{i=1}^k$ is thus given by $\displaystyle (\sigma_{\mathcal{X}})_0=\prod_{i=1}^k \delta_i$ and $(\sigma_{\mathcal{X}})_i=\delta_i^{-2}$. Notice that $\sigma_{\mathcal{X}}^{\alpha}=1.$ We will explicitly verify that the following equality holds:  \begin{equation}
    \label{ex1}
    \left<\bigotimes_{i=1}^k R^G_{T}(\gamma_i),1\right>=\Coeff_{y^{\alpha}}\left(\Plexp\left(\sum_{\eta \in \mathcal{H}^*_{\sigma_{\Ch},\alpha}}a_{Q,\eta}(q)y^{\eta}\right)\right)=M_{Q, \alpha, \mathcal{H}_{\sigma_{\mathcal{X}},\alpha}}(q)
\end{equation} 

\vspace{8 pt}

Notice that if $\eta \leq \alpha$ and $\eta \in (\N^I)^*$ then either $(\eta)_0=1$ or $(\eta)_0=2$. For an element $\eta \in (\N^I)^*$ such that $\eta_0=1$, we have $a_{Q,\eta}(t)=1$. An element $\eta \in (\N^I)^*$ such that $\eta_0=1$ is identified by the subset $A_{\eta} \subseteq \{1,\dots,k\}$ defined as $A_{\eta}\coloneqq\{1 \leq i \leq k \ \  \text{s.t.}\ \  \eta_i=1 \} .$ 

For such an $\eta$, we have thus $$\sigma_{\mathcal{X}}^{\eta}=\left(\prod_{i=1}^k \delta_i\right)\prod_{j \in A_{\eta}}\delta_j^{-2}=\prod_{j \in A_{\eta}}\delta_j^{-1}\prod_{h \in A_{\eta}^c}\delta_h .$$ 

For $\eta_1,\dots \eta_r \in (\N^I)^*$ and $m_1,\dots,m_r \in \N^*$ such that $m_1\eta_1 +\cdots +m_r \eta_r=\alpha $ we have that either  $r=1$, $m=1$ and $\eta=\alpha$, either $r=2$, $m_1=m_2=1$ and   $\eta_1+\eta_2=\alpha$ with $\eta_1 \neq \eta_2$ and $(\eta_1)_0=(\eta_2)_0=1$ . The right hand side of Equation (\ref{ex1}) is thus equal to:

\begin{equation}
    \label{ex2}
    \Coeff_{y^{\alpha}}\left(\Plexp\left(\sum_{\eta \in \mathcal{H}_{\sigma_{\mathcal{X}},\alpha}^*}a_{Q,\eta}(q)y^{\eta}\right)\right)=a_{Q,\alpha}(q)+\dfrac{1}{2}\sum_{\substack{\eta \in \mathcal{H}^*_{\sigma_{\mathcal{X}},\alpha} \\ \text{s.t }  \eta_0=1}}a_{Q,\eta}(q)a_{Q,\alpha-\eta}(q)=a_{Q,\alpha}(q)+\dfrac{|\mathcal{H}_{\sigma_{\mathcal{X}},\alpha}^*-\{\alpha\}|}{2}
\end{equation}

Notice that the cardinality $|\mathcal{H}_{\sigma_{\mathcal{X}},\alpha}^*-\{\alpha\}|$ is even as the set $\mathcal{H}_{\sigma_{\mathcal{X}},\alpha}^*-\{\alpha\}$ admits the involution without fixed points which sends $\eta$ to $\alpha-\eta$.

The left hand side of Equation (\ref{ex1}) can be computed explicitly using the character table of $\Gl_2(\F_q)$, which can be found for example on \cite[Page 194]{DM}.

We have indeed four types for the conjugacy classes of $\Gl_2(\F_q)$ (see for example \cite[Paragraph 4.1]{HA} for the definition of the type of a conjugacy class):

\begin{enumerate}
    \item We say that $g$ is of type $\omega_1$ (and we write $g \sim \omega_1$) if $g$ is of the form $\begin{pmatrix}
    \lambda &0\\
    0 &\lambda
    \end{pmatrix}$ for $\lambda \in \mathbb{F}_q^*$
    
    \item We say that $g$ is of type $\omega_2$ (and we write $g \sim \omega_2$) if $g$ is conjugated to a Jordan block  $\begin{pmatrix}
    \lambda &1\\
    0 &\lambda
    \end{pmatrix}$ for $\lambda \in \mathbb{F}_q^*$. The centralizer of such a $g$ has cardinality $ q(q-1)$
    
    \item We say that $g$ is of type $\omega_3$ (and we write $g \sim \omega_3$) if $g$ is conjugated to a diagonal matrix $\begin{pmatrix}
    \lambda &0\\
    0 &\mu\end{pmatrix}$ with $\lambda \neq \mu \in \mathbb{F}_q^*$. The centralizer of such a $g$ has cardinality $(q-1)^2$
    
    \item 
    We say that $g$ is of type $\omega_4$ (and we write $g \sim \omega_4$) if $g$ is conjugated to a matrix of the form
    $\begin{pmatrix}
    0 &-1\\
    x x^q &x+x^q
    \end{pmatrix}$ with $x \neq x^q \in \mathbb{F}_{q^2}^*$. The centralizer of such a $g$  cardinality $q^2-1$
\end{enumerate}

For a semisimple split character of the form $R^G_T(\gamma)$ and $g \in \Gl_2(\F_q)$, the value $R^G_T(\gamma)(g)$ depends on the type of $g$ in the following way:
\begin{enumerate}
\item If $g \sim \omega_1$, then  $R^G_T(\gamma)(g)=(q+1)\gamma(\lambda)$

\item If  $g \sim \omega_2$, then $R^G_T(\gamma)(g)=\gamma(\lambda)$

\item If $g \sim \omega_3$ and $g$ is conjugated to $\begin{pmatrix}
    \lambda &0\\
    0 &\mu\end{pmatrix}$, then  $R^G_T(\gamma)(g)=\gamma(\lambda,\mu)+\gamma(\mu,\lambda)$

\item If $g\sim \omega_4$, then $R^G_T(\gamma)(g)=0$.

\end{enumerate}

Denote by $\mathbf{T}_2$ the set $\{\omega_1,\omega_2,\omega_3,\omega_4\}$. To compute the left hand side of Equation (\ref{ex1}), we will split the sum over the types of the conjugacy classes:

$$
    \left<\bigotimes_{i=1}^kR^G_T(\gamma_i) ,1\right>=\dfrac{1}{|\Gl_2(\F_q)|}\sum_{g \in \Gl_2(\F_q)}\prod_{i=1}^k R^G_T(\gamma_i)(g)=
\dfrac{1}{|\Gl_2(\F_q)|}\sum_{\omega \in \mathbf{T}_2} \sum_{g \sim \omega} \prod_{i=1}^k R^G_T(\gamma_i)(g) .$$
We see that $\displaystyle \dfrac{1}{|\Gl_2(\F_q)|}\sum_{g \sim \omega_1}R^G_T(\gamma_i)(g)=\dfrac{(q+1)^k(q-1)}{(q-1)^2q(q+1)}=\dfrac{(q+1)^{k-1}}{q(q-1)}$ and $\displaystyle \dfrac{1}{|\Gl_2(\F_q)|} \sum_{g \sim \omega_2}R^G_T(\gamma_i)(g)=\dfrac{(q-1)}{q(q-1)}=\dfrac{1}{q}$. In a similar manner, $$\displaystyle \dfrac{1}{|\Gl_2(\F_q)|} \sum_{g \sim \omega_3}R^G_T(\gamma_i)(g)=\dfrac{1}{2(q-1)^2}\sum_{\substack{\lambda,\mu \in \mathbb{F}_q^*\\ \lambda \neq \mu }}\prod_{i=1}^k (\delta_i(\lambda\mu^{-1})+\delta_i^{-1}(\lambda\mu^{-1}))=
$$ $$\frac{1}{2(q-1)^2}\sum_{\lambda,\mu \in \mathbb{F}_q^*}\prod_{i=1}^k (\delta_i(\lambda\mu^{-1})+\delta_i^{-1}(\lambda\mu^{-1}))-\dfrac{2^k(q-1)}{2(q-1)^2} .$$ 
As the homomorphism $\mathbb{F}_q^* \times \mathbb{F}_q^* \to \mathbb{F}_q^*$ which sends $(\lambda,\mu)$ to $\lambda \mu^{-1}$ is surjective, we can rewrite the sum above as: $$-\dfrac{2^{k-1}}{(q-1)}+\dfrac{(q-1)}{2(q-1)^2}\sum_{\epsilon \in \mathbb{F}_q^*}\sum_{B \subseteq \{1,\dots,k\}}\prod_{i \in B}\delta_i(\epsilon) \prod_{i \in B^c}\delta_i^{-1}(\epsilon)=$$ $$ -\dfrac{2^{k-1}}{(q-1)}+\dfrac{1}{2(q-1)}\sum_{\substack{\eta \in (\N^I)^* \\ \eta_0=1}}\sum_{\epsilon \in \mathbb{F}_q^*}\sigma_{\gamma}^{\eta}(\epsilon)= \dfrac{|\mathcal{H}_{\sigma_{\gamma},\alpha}^*-\{\alpha\}|}{2}-\dfrac{2^{k-1}}{(q-1)} .$$
 and so \begin{equation}
    \label{ex11} \left<\bigotimes_{i=1}^kR^G_T(\gamma_i) ,1\right>=\dfrac{(q+1)^{k-1}}{q(q-1)}+\dfrac{1}{q}+\dfrac{|\mathcal{H}^*_{\sigma_{\gamma},\alpha}-\{\alpha\}|}{2}-\dfrac{2^{k-1}}{(q-1)}=\dfrac{(q+1)^{k-1}+(q-1)-2^{k-1}q}{q(q-1)}+\dfrac{|\mathcal{H}_{\sigma_{\gamma},\alpha}-\{\alpha\}|}{2}.
 \end{equation}

Recall that the Identity (\ref{ex10})  gives an equality $a_{Q,\alpha}(q)=\left<\bigotimes_{i=1}^k R^G_T(\mu_i),1\right>$ for a generic $k$-tuple $(R^G_T(\mu_i))_{i=1}^k$. The multiplicity  $ \left<\bigotimes_{i=1}^k R^G_T(\mu_i),1\right>$ can be computed in the same way as Identity (\ref{ex11}) above and gives the identity: \begin{equation}
    a_{Q,\alpha}(q)=\dfrac{(q+1)^{k-1}+(q-1)-2^{k-1}q}{q(q-1)}.
\end{equation} 

From Identity (\ref{ex2}) and Identity (\ref{ex11}), we deduce Identity (\ref{ex1}).

\section{Conflict of interest}

The author has no conflict of interest to declare that are relevant to this article.

\end{document}